\documentclass[letterpaper]{scrartcl}

\usepackage{amsmath, amsthm, amssymb}
\usepackage{amssymb}
\usepackage[alphabetic]{amsrefs}
\usepackage{hyperref}
\usepackage[shortlabels]{enumitem}
\usepackage[margin=3.1cm]{geometry}

\setkomafont{disposition}{\bfseries}

\newtheorem{theorem}{Theorem}[section]
\newtheorem{lemma}[theorem]{Lemma}
\newtheorem{corollary}[theorem]{Corollary}
\newtheorem{proposition}[theorem]{Proposition}

\theoremstyle{remark}
\newtheorem{remark}[theorem]{Remark}

\numberwithin{equation}{section}

\newcommand{\Cstar}{\mathrm{C}^*}

\DeclareMathOperator{\Lin}{span}

\newcommand{\N}{\mathbb{N}}

\newcommand{\R}{\mathbb{R}}
\newcommand{\C}{\mathbb{C}}
\newcommand{\Id}{\mathrm{Id}}
\newcommand{\G}{\mathrm{G}}
\newcommand{\U}{\mathrm{U}}
\newcommand{\V}{\mathrm{V}}
\newcommand{\Aut}{\mathrm{Aut}}

\newcommand{\sa}{\mathrm{sa}}

\newcommand{\X}{\mathrm{X}}
\newcommand{\Y}{\mathrm{Y}}
\newcommand{\A}{\mathcal{A}}
\newcommand{\Li}{\mathcal{L}}
\newcommand{\Ad}{\mathrm{Ad}}
\newcommand{\ad}{\mathrm{ad}}

\title{Normal subgroups of invertibles and of unitaries in  a  $\Cstar$-algebra}
\author{Leonel Robert}
\date{}
\begin{document}
	\maketitle
\begin{abstract}
We investigate the normal subgroups of the groups of invertibles  and unitaries  in the connected component of the identity.
By relating  normal subgroups to  closed two-sided ideals we obtain a  ``sandwich condition" describing all the closed normal subgroups both 
in the  invertible  and in the the unitary case.  We use this to prove a conjecture by Elliott and 
R{\o}rdam: in a simple $\Cstar$-algebra, the group of approximately inner automorphisms induced by unitaries in the connected component of the identity is topologically simple. Turning to non-closed subgroups, we show, among other things, that in simple unital $\Cstar$-algebra the commutator subgroup of the group of invertibles in the connected component of the identity
is a simple group modulo its center. A similar result holds for unitaries under a mild extra assumption.  
\end{abstract}

\section{Introduction}
We investigate below the normal subgroups of the group of invertibles and of the group of unitaries of a $\Cstar$-algebra. We confine ourselves to the connected component of the identity. Let  $\G_A$ denote the group of invertibles connected to the identity  and  $\U_A$ the unitaries connected to the identity, $A$ being the ambient $\Cstar$-algebra.
The problem of calculating  the normal subgroups of  $\G_A$ and $\U_A$ 
goes to back to Kadison's papers \cite{kad1,kad2,kad3}, where the (norm) closed normal subgroups of $\G_A$ and $\U_A$,  for  $A$ a von Neumann algebra factor, were  fully described. 
Over the years various authors have returned to this problem, focusing on the case that $A$ is  a simple $\Cstar$-algebra, and assuming further structural properties on $A$. 
To wit, the factor case is thoroughly discussed in \cite{delaHarpe2}; in \cite{delaHarpe-Skandalis3}  de la Harpe and Skandalis describe the normal subgroups of $\G_A$ and $\U_A$ for $A$ simple, unital and AF; in \cite{thomsen} Thomsen extends their results to certain simple AH C*-algebras; in \cite{elliott-rordam}  Elliott and R{\o}rdam describe the closed normal subgroups of $\U_A$ for $A$ simple,  of real rank zero, stable rank one,  and with strict comparison of projections; in \cite{ng-ruiz} Ng and Ruiz  
do the same for $A$ simple, unital, exact, and $\mathcal Z$-stable. 
The methods used in these works rely heavily on the matrix or matrix-like structure of the algebras under study. This permits the use of special similarities such as transvections and involutions (modeled after the classical case of linear groups).
In this paper we use a different approach to the problem: we rely on the theory of Lie ideals of 
$\Cstar$-algebras and on the properties of the exponential map. This yields results of greater generality. Notably,
we avoid assuming any kind of  matrix-like structure on the $\Cstar$-algebra. A direct link between normal subgroups and Lie ideals is given by the fact that the closure of the linear span of a normal subgroup is a Lie ideal. More useful to us is  that the closure of the linear span of the commutators $ha-ah$, with $h$ ranging in a normal subgroup and $a$ in $A$, is a Lie ideal. 
The exponential map, on the other hand,   plays the crucial role of bringing the profits of our analysis in the additive setting back into the multiplicative one. With these methods we obtain a full description of the closed normal subgroups of $\G_A$ and $\U_A$. By refining our methods we are also able to say something about non-closed normal subgroups. 
That this approach could bare fruit in the study of the normal subgroups of the invertibles   of a ring was put forward--quite explicitely--by Herstein in \cite{herstein-banach}. The results of this paper are confirmation that Herstein's proposal 
can be followed through in the context of $\Cstar$-algebras.

Let us introduce some notation. 
Given $x,y\in A$,
let $[x,y]=xy-yx$ and $(x,y)=xyx^{-1}y^{-1}$ (provided that $x$ and $y$ are invertible). Applied to sets,  $[X,Y]$  stands for the linear span of $[x,y]$ with $x\in X$ and $y\in Y$; 
$(X,Y)$ stands  for the subgroup generated by $(x,y)$,
with $x\in X$ and $y\in Y$. We prove below the following theorem:
 
\begin{theorem}\label{commequal}
	Let $H$ be a closed normal subgroup of $\G_A$. Let $I$ denote the closed 
two-sided ideal generated by $[H,A]$. 
	Then $\overline{(H,\G_A)}=\overline{(\G_I,\G_A)}$. 
\end{theorem}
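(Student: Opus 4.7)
Both $\overline{(H,\G_A)}$ and $\overline{(\G_I,\G_A)}$ lie in $\G_I$: for any $h\in H$ and $u\in\G_A$, the identity $(h,u)-1=[h,u]\,h^{-1}u^{-1}$ places $(h,u)-1$ in $[H,A]\cdot A\subseteq I$, and a path $h_t$ in $H$ from $1$ to $h$ yields a path $t\mapsto (h_t,u)$ inside $1+I$ from $1$ to $(h,u)$, exhibiting $(h,u)\in\G_I$; the same argument treats $(\G_I,\G_A)\subseteq\G_I$. My plan is to prove the equality by passing to the infinitesimal level via the exponential map, where the two sides correspond to the same closed Lie ideal of $A$ thanks to a Herstein-type containment.

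The key Lie-algebraic input (from the paper's Lie-ideal machinery, alluded to in the introduction) is that for the closed Lie ideal $L=\overline{[H,A]}$ of $A$, the closed two-sided ideal it generates is exactly $I$ and satisfies $\overline{[I,A]}\subseteq L$. Concretely, each $[x,a]$ with $x\in I$ and $a\in A$ is a norm-limit of finite sums $\sum_i\lambda_i[h_i,a_i]$ with $h_i\in H$ and $a_i\in A$; conversely, every $[h,a]$ is in $I$ and, after absorbing a central factor via the identity $[hz,a]=[h,a]\,z$ (so that $h-1\in I$ may be assumed without loss), it lies in $[I,A]$ modulo the centre. These two Lie-ideal inclusions are symmetric and will be lifted to the group level.

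For $\overline{(\G_I,\G_A)}\subseteq\overline{(H,\G_A)}$, it suffices to show $(\exp x,u)\in \overline{(H,\G_A)}$ for $x\in I$ and $u\in\G_A$, since $\G_I$ is generated (as a topological group) by such exponentials. Using $\overline{[I,A]}\subseteq \overline{[H,A]}$, each $[x,a]$ is approximated by sums $\sum_i\lambda_i[h_i,a_i]$; the standard BCH identity $(\exp sy,\exp tb)=\exp\!\bigl(st[y,b]+O((|s|+|t|)^3)\bigr)$ then converts this into a group-commutator approximation. Partitioning the path $s\mapsto (\exp sx,u)$ into many small sub-intervals, approximating each infinitesimal increment by a short product of small commutators in $(H,\G_A)$, and telescoping, the cumulative norm error vanishes under refinement of the partition. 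The reverse inclusion $\overline{(H,\G_A)}\subseteq\overline{(\G_I,\G_A)}$ runs by the symmetric procedure, with each $[h,a]$ approximated by brackets $[x_i,b_i]$ for $x_i\in I$ and central contributions dropping harmlessly out of the group commutator.

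The principal technical obstacle is the exponential-to-group lifting when $u$ is not close to $1$, since the BCH expansion only controls $(\exp y,\exp b)$ when both arguments are small. I would handle this by writing $u=\exp b_1\cdots \exp b_k$ as a product of small exponentials and applying the commutator identity $(v,u_1u_2)=(v,u_1)\cdot u_1(v,u_2)u_1^{-1}$ iteratively, using the $\G_A$-normality of the two target subgroups to absorb the conjugations, thereby reducing to the case where both commutator arguments are near $1$. The partition-and-telescope estimate then yields uniform norm control and completes both inclusions.
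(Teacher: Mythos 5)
Your overall route is the same as the paper's: identify $\overline{[H,A]}$ with $\overline{[I,A]}$ by the Bre\v{s}ar--Kissin--Shulman Lie-ideal theorem and then pass between the additive and multiplicative pictures through exponentials; your partition-and-telescope estimate is a hand-rolled substitute for Trotter's formula \eqref{trotter}, which the paper uses to the same effect. The genuine gap is in the inclusion $\overline{(H,\G_A)}\subseteq\overline{(\G_I,\G_A)}$. What you need there is that $hbh^{-1}-b$ (equivalently $\log\,(h,e^{tb})$ for small $t$) lies in $\overline{[I,A]}$, so that it can be approximated by sums of brackets $[x_i,b_i]$ with $x_i\in I$. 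Your justification --- absorb a central factor via $[hz,a]=[h,a]z$ so that ``$h-1\in I$ may be assumed'' --- does not work: all you know about $h\in H$ is that its image in $A^\sim/I$ commutes with everything, and a central element of the quotient is in general not a scalar and does not lift to, or factor through, the centre of $A^\sim$; so there is no decomposition $h=zh_0$ with $z$ central and $h_0\in\G_I$, and ``central contributions dropping out of the group commutator'' cannot replace one. The correct bridge, which the paper uses in Lemmas \ref{idealnormal} and \ref{NI}, is Miers' lemma $\overline{[A,A]}\cap I=\overline{[I,A]}$ \eqref{mierseq}: one has $hbh^{-1}-b\in[H,A]\subseteq I$ by the very definition of $I$, and also $hbh^{-1}-b\in[A,A]$, and \eqref{mierseq} is exactly what carries you from $I\cap\overline{[A,A]}$ into $\overline{[I,A]}$. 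Without it (or an equivalent substitute) this direction of your argument is unproven.

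Two further points. For the opposite inclusion your ``key Lie-algebraic input'' $\overline{[I,A]}\subseteq\overline{[H,A]}$ is precisely \cite[Theorem 5.27]{BKS}, the same theorem the paper relies on, so citing it is fair; but your ``concrete'' gloss merely restates the containment rather than deriving it, and to apply the theorem you must first verify its hypotheses: that the closed subspace in question ($\overline{\Lin(H)}$, or $\overline{[H,A]}$) is a Lie ideal contained in $\overline{[A,A]}$ whose associated closed ideal is $I$. This is where the normality of $H$ enters, through the fact that closed conjugation-invariant subspaces are Lie ideals (\cite[Theorem 2.3]{marcoux-murphy}); your outline never uses normality at this point. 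Finally, a small slip at the start: to see $(h,u)\in\G_I$ you take a path in $H$ from $1$ to $h$, but a closed normal subgroup of $\G_A$ need not be connected (e.g.\ $\{\pm 1\}$); connect in the other variable, $t\mapsto(h,u_t)$ with $u_t$ a path in $\G_A$ from $1$ to $u$, or simply note that only $(H,\G_A)\subseteq I^\sim$ is needed for the proof.
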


The analogous theorem for Lie ideals, on which we rely to prove this theorem, is obtained  
by Bre\v{s}ar, Kissin and Shulman in \cite[Theorem 5.27]{BKS}.
Theorem \ref{commequal} yields a  description of all closed normal subgroups of $\G_A$
in terms of  normal subgroups associated to closed two-sided ideals of $A$.  To see this, given a closed two-sided ideal $I$ define 
\begin{align}\label{defNI}
\G_{A,I}=\{a\in \G_A\mid (a,\G_A)\subseteq \overline{(\G_I,\G_A)}\}.
\end{align}
This is clearly a normal subgroup of $\G_A$. (By Lemma \ref{NI}, $\G_{A,I}$ is simply the preimage  of the center of $\G_{A/I}$ under the quotient map.)
It follows from Theorem  \ref{commequal} that	 for any closed normal subgroup $H$ of $\G_A$ there exists a closed two-sided ideal $I$ (namely,  the one generated by $[H,A]$) such that
	\[
	\overline{(\G_I,\G_A)}\subseteq H\subseteq \G_{A,I}.
	\] 
Conversely, any  subgroup $H\subseteq \G_A$ lying in between 
	$\overline{(\G_I,\G_A)}$ and $\G_{A,I}$
	is automatically  normal.  This kind of ``sandwich condition'' describing normal subgroups is well-known in the study of normal subgroups of general linear groups
	over rings (where a matrix structure for the ring is certainly available); see \cite{bass},\cite{vaserstein}.

The analogue of Theorem \ref{commequal} for $\U_A$ is equally valid (Theorem \ref{unitarycommequal}) and obtained by the same methods. We  use it  to investigate the  normal subgroups of the group of approximately inner automorphisms induced by unitaries in $\U_A$. Let us denote this group by $\V_A$ and endow it with the topology of pointwise convergence
in norm.
 We again find an analog of Theorem \ref{commequal} for $\V_A$. Hence, its closed normal subgroups are described by a sandwich condition (this description is somewhat simplified by the fact that $\V_A$ is topologically perfect). It follows that if  $A$ is a simple $\Cstar$-algebra then  $\V_A$   is a topologically simple group. This answers a  conjecture by Elliott and R{\o}rdam from \cite{elliott-rordam}.

The methods used to prove the theorems discussed so far can be adapted to  non-closed normal subgroups. In this case we must rely on results on non-closed Lie ideals.  
We also need to look closer at the properties of the exponential map in a small neighbourhood of the identity.  More specifically, we rely on the existence of certain elements linked to the first Kashiwara-Vergne equation, as  developed by Rouvi\`{e}re in \cite{rouviere}.
Equipped with these tools we prove the following theorem:
\begin{theorem}\label{HGAGAintro}
	Let $A$ be a unital $\Cstar$-algebra. Let $H$ be a subgroup of $\G_A$ normalized by $(\G_A,\G_A)$.  Suppose that $[H,A]$ generates $A$ as a closed two-sided ideal. Then $(\G_A,\G_A)\subseteq H$. Moreover, in this case 
there exists  $[A,A]\subseteq L\subseteq A$, additive subgroup, such that
\[
H=\{e^{b_1}\cdots e^{b_n}\mid \sum_{i=1}^n b_i\in L\}.
\] 
\end{theorem}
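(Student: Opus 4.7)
The plan is to prove the inclusion $(\G_A,\G_A)\subseteq H$ first, and then use it to construct the additive subgroup $L$ describing $H$.

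For the inclusion, I would proceed in two steps. First, note that $L_0 := [H,A]$ is a Lie ideal of $A$ (as observed in the introduction), and by hypothesis generates $A$ as a closed two-sided ideal. I would apply a non-closed analog of the Bre\v{s}ar-Kissin-Shulman Lie-ideal machinery underlying Theorem \ref{commequal} to upgrade this closed-ideal condition to the algebraic conclusion $[A,A]\subseteq L_0$, or at least to a statement controlling enough commutators to drive the next step. Second, I would transfer this additive statement to the multiplicative setting via Rouvi\`{e}re's factorization from the first Kashiwara-Vergne equation: for $x,y$ of sufficiently small norm,
\[
\log((e^x,e^y)) = [x,y]+r(x,y),\qquad r(x,y)\in [A,A],
\]
with $r(x,y)$ a finite sum of two bracket terms of the form $[x,\phi(\ad x)F(x,y)]+[y,\psi(\ad y)G(x,y)]$ coming from the KV solutions. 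Combined with the hypothesis that $H$ is normalized by $(\G_A,\G_A)$, and a standard commutator-subdivision argument (using the identity $(ab,c)=a(b,c)a^{-1}(a,c)$ together with $e^x=(e^{x/n})^n$ to make exponents small), this shows $(e^a,e^b)\in H$ for all $a,b\in A$. Since $\G_A$ coincides with the clopen subgroup $\langle \exp(A)\rangle$ (being the path component of $1$), $(\G_A,\G_A)$ is generated by commutators of single exponentials, which yields $(\G_A,\G_A)\subseteq H$. An essentially parallel use of the same identity also gives $e^c\in(\G_A,\G_A)$ for each $c\in[A,A]$: iterated Rouvi\`{e}re factorization writes $e^c$ as a finite product of commutators of exponentials.

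For the structural description, set
\[
L=\Big\{\sum_{i=1}^n b_i : n\in\N,\ b_i\in A,\ e^{b_1}\cdots e^{b_n}\in H\Big\}.
\]
Closure under addition follows by concatenating products, and closure under negation by reversing order and negating each exponent; hence $L$ is an additive subgroup of $A$. The inclusion $[A,A]\subseteq L$ follows from the observation above that $e^c\in(\G_A,\G_A)\subseteq H$ for each $c\in[A,A]$: the single-term witness $c = c$ gives $c\in L$. The inclusion $H\subseteq\{\prod e^{b_i}:\sum b_i\in L\}$ is immediate since every element of $\G_A$ is a product of exponentials. For the reverse inclusion, given $\sum b_i\in L$ with a witness $\prod e^{c_j}\in H$ satisfying $\sum c_j=\sum b_i$, the ratio $(\prod e^{b_i})(\prod e^{c_j})^{-1}$ is a product of exponentials with total exponent $0$; by iterated KV-Rouvi\`{e}re (with subdivision as needed to stay in the convergent regime) this ratio equals $e^d$ for some $d\in[A,A]$, which lies in $(\G_A,\G_A)\subseteq H$, whence $\prod e^{b_i}\in H$.

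The main obstacle is keeping every identity algebraically exact rather than norm-closed. Each step---the Lie-ideal upgrade from the closed-ideal hypothesis to $[A,A]\subseteq L_0$, the Rouvi\`{e}re recursion delivering a finite decomposition of $e^c$ into commutators, and the final claim that a product of exponentials with total exponent $0$ equals a single $e^d$ with $d\in[A,A]$---must avoid passing to closures; otherwise one obtains only $\overline{(\G_A,\G_A)}\subseteq H$ and $\overline{[A,A]}\subseteq L$, strictly weaker than the theorem's conclusion. The availability of Kashiwara-Vergne solutions expressing $r(x,y)$ as a finite sum of two bracket terms, rather than a merely convergent infinite series of commutators, is the key technical ingredient that keeps the Rouvi\`{e}re correction inside the algebraic $[A,A]$ rather than its norm closure, and is what ultimately makes the algebraic (non-topological) conclusions go through.
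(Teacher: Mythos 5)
Your structural half (the definition of $L$ and the reduction of the description of $H$ to the inclusion $(\G_A,\G_A)\subseteq H$) matches the paper's Proposition \ref{moreconcrete} and Theorem \ref{GAGA}, up to the minor slip that a product of exponentials with total exponent $0$ need not be a single $e^d$ --- it suffices, and is what the paper uses, that such a product lies in $(\G_A,\G_A)$ by the description (b) in Theorem \ref{GAGA}. The core inclusion $(\G_A,\G_A)\subseteq H$, however, is where your proposal has a genuine gap, in fact two. First, the ``non-closed analog of the Bre\v{s}ar--Kissin--Shulman machinery'' that would upgrade the hypothesis $\Id([H,A])=A$ to the purely algebraic containment $[A,A]\subseteq [H,A]$ is exactly what is \emph{not} available; you name the obstacle (no closures allowed) but supply no mechanism to overcome it. The paper never proves such a containment: it only obtains $\overline{\Lin(\Lambda)}=\overline{[A,A]}$ for $\Lambda=\{hah^{-1}-a\mid h\in H,\ a\in[A,A]\}$, and then sidesteps the closure by a different device --- fullness of $\pi_3(\Lambda)$ plus unitality gives a \emph{finite} identity $1=\sum_i y_i\pi_3(\overline x_i)z_i$, which is then perturbed into the $t$-dependent exact identities built from $W_t(h,a)=\frac1t\log((h,e^{ta}))$, and $[c,d]$ is expanded, using the quantitative Lemma \ref{lieidentities} and Theorem \ref{commN2} (with norm bounds) and the square-zero conjugation trick, into a finite sum $\sum_j W_t(h_j',a_j')$ with the number of terms and the norms bounded uniformly in $t$. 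None of this appears in your sketch, and without it the step ``$[A,A]\subseteq L_0$'' is an unsupported assertion.

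Second, even granting that upgrade, your multiplicative transfer does not place anything in $H$. The Rouvi\`{e}re/KV formula $\log((e^x,e^y))=[x,y]+r(x,y)$ with $r(x,y)\in[A,A]$ shows that commutators of exponentials are exponentials of elements of $[A,A]$, i.e.\ it feeds the description of $(\G_A,\G_A)$ (Theorem \ref{GAGA}); it says nothing about membership in $H$. Membership in $H$ has to be produced from elements of $H$ themselves: the paper's exponentiation step peels off, via Theorem \ref{KVhard}, factors of the form $e^R(h_j',e^{ta_j'})e^{-R}$, and these lie in $H$ only because $a_j'\in[A,A]$ (so $e^{ta_j'}\in(\G_A,\G_A)$ and $R,S\in[A,A]$, so $e^{\pm R},e^{\pm S}\in(\G_A,\G_A)$) and $H$ is normalized by $(\G_A,\G_A)$. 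If instead you work with $[H,A]=\Lin\{hah^{-1}-a\mid a\in A\}$ with $a$ arbitrary, as your $L_0$ does, the resulting factors $(h,e^{ta})$ need $e^{ta}$ to normalize $H$, which the hypothesis does not give. This is precisely why the paper restricts $\Lambda$ to $a\in[A,A]$ and then needs the extra combinatorial work to still reach $[c,d]$ for arbitrary $c,d\in A$. As written, your argument concludes only that the relevant elements lie in $(\G_A,\G_A)$, not in $H$, so the key inclusion is not established.
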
	
The assumption that $[H,A]$ generates $A$ as a closed two-sided ideal  may be rephrased as  saying that $H$ is non-central in any non-zero quotient of $A$, or roughly put, that $H$ is ``sufficiently non-central".
In the case of a simple unital $A$, Theorem \ref{HGAGAintro} implies that a subgroup normalized by $(\G_A,\G_A)$  is either contained in the center or contains $(\G_A,\G_A)$; in particular, $(\G_A,\G_A)$ is simple modulo its center.  This is the mutiplicative analogue of a well-known theorem of Herstein on Lie ideals (\cite[Theorem 1.12]{hersteinbook}).

This paper is organized as follows: In Section \ref{closedGA} we investigate the closed normal subgroups of $\G_A$, proving among other results Theorem \ref{commequal} stated above. In Section \ref{closedUA} we prove analogous results on $\U_A$ and $\V_A$. In Section \ref{nonclosedlie} we revisit some of our work from \cite{lie} on Lie ideals, in preparation for the next section. In Section \ref{nonclosedGA} we investigate the non-closed normal subgroups of $\G_A$, proving among other results Theorem \ref{HGAGAintro} stated above. In Section \ref{nonclosedUA} we do the same for $\U_A$. However, in this case the arguments do not run as smoothly; we need to assume the existence of full orthogonal projections in the algebra. Finally, in the Appendix we revisit certain manipulations of the Campbell-Baker-Hausdorff formula linked  to the first  Kashiwara-Vergne equation. We follow closely the exposition of these results in \cite{rouviere}.

\section{Closed normal subgroups of invertibles}\label{closedGA}
Let's start by fixing some  notation and recalling some useful facts. Let $A$ be a $\Cstar$-algebra. Let $A^{\sim}$ denote its minimal unitization. 
Let  $\mathrm{GL}(A)$  denote the group of invertible elements of $A$. For non-unital $A$,   $\mathrm{GL}(A)$ is defined as the invertible elements in  $A^\sim$ of the form $1+x$, with $x\in A$. We regard  $\mathrm{GL}(A)$ as a topological group under the norm topology of $A$. 
Our focus is on the connected component of the identity of $\mathrm{GL}(A)$.  This group, sometimes denoted by $\mathrm{GL^0}(A)$, will be denoted here  by $\mathrm{G}_A$.

Let $X$ be  a  subset of $A$. We denote by $\Lin(X)$ the linear span of $X$, by
$\Id(X)$  the closed two-sided ideal of $A$ generated by  $X$. 
 Let $x,y\in A$. We denote by $[x,y]$ the commutator  $xy-yx$  and by $(x,y)$ the multiplicative commutator $xyx^{-1}y^{-1}$  (provided that $x$ and $y$ are invertible).
We extend this notation to sets: If $X,Y\subseteq A$ then  $[X,Y]$ denotes the subspace  $\Lin(\{[x,y]\mid x\in X,y\in Y\})$ and  
$(X,Y)$  the subgroup  generated by the set $\{(x,y)\mid x,\in X,y\in Y\}$   (provided that $X$ and $Y$ are sets of invertibles).
We will make frequent use of the identity
\begin{equation}\label{breakprod}
(xy,a)=x(y,a)x^{-1}\cdot (x,a),
\end{equation}
or rather, of its corollary that $(x_1x_2\cdots x_n,a)$ is contained in any normal subgroup  containing $(x_1,a),\ldots,(x_n,a)$.

Let $e^z$ denote the exponential of $z\in A$. Recall that $\{e^a\mid a\in A\}$
is a generating set of $\G_A$. We  make use below of various well known properties of the exponential map some of  which we now review:
For all $a,b\in A$ we have  
\begin{equation}\label{CBH1}
\log(e^{a}e^{b})-(a+b)\in \overline{[A,A]}
\end{equation}
for $a,b\in A$ such that $\|a\|+\|b\|< \frac {\log 2} 2$. This is a consequence of  the convergence of the Campbell-Baker-Hausdorff formula (\cite{dynkin}). 
Yet another consequence   of the Campbell-Baker-Hausdorff formula 
that we use frequently is Trotter's formula:
\begin{equation}\label{trotter}
e^{a+b}=\lim_n \Big(e^{\frac a n}e^{\frac b n}\Big)^n.
\end{equation}
Observe that it implies that  $e^{a+b}$ belongs to the closed subgroup generated by 
$e^{\frac a n}$ and $e^{\frac b n}$ for $n=1,2\dots$.

A subspace $L\subseteq A$ is called a Lie ideal if $[L,A]\subseteq A$. We make use throughout the paper  of  various results  on Lie ideals of $\Cstar$-algebras. We will recall them as  needed, but 
we mention two here which  feature crucially in various arguments. The first,  
\cite[Theorem 1]{herstein}, says that if  $L$ is a Lie ideal of $A$ and $x\in A$ is such that $[x,[x,L]]=0$ then $[x,L]=0$.  This theorem is valid more generally for semiprime rings without 2-torsion. The second, \cite[Lemma 1.6]{lie}, says that  if $L$ is a closed Lie ideal of $A$ such that $L\subseteq \overline{[A,A]}$ and $\Id(L)=\Id([L,A])$
then $L=\overline{[\Id(L),A]}$. This is  a  convenient version of the theorem by Bre\v{s}ar, Kissin, and Shulman \cite[Theorem 5.27]{BKS},  relating the Lie ideals of a $\Cstar$-algebra to its closed two-sided ideals (of which Theorem \ref{commequal} from  the introduction is the multiplicative analogue). 

For each closed two-sided ideal $I$ of $A$ we regard $I^\sim$  as a subalgebra of $A^\sim$, and, in this way, $\G_I$ as a closed  normal subgroup of $\G_A$.
Our first goal is to prove Theorem \ref{commequal} from the introduction. Before, we prove a couple of lemmas.

\begin{lemma}\label{idealnormal}
	Let $I$ be a closed two-sided ideal of $A$. Then the set $e^{[I,A]}$ generates  	$\overline{(\G_I,\G_A)}$ as a topological group.	
\end{lemma}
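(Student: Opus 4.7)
The plan is to prove both inclusions. Let $H$ denote the smallest closed subgroup of $\G_A$ containing $e^{[I,A]}$; the goal is $H=\overline{(\G_I,\G_A)}$.

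For the inclusion $H\subseteq \overline{(\G_I,\G_A)}$, I would first verify, by a short power-series expansion, that $(e^{i/n},e^{a/n})=1+\tfrac{1}{n^2}[i,a]+O(1/n^3)$, and hence
\[
e^{[i,a]}=\lim_n\bigl(e^{i/n}e^{a/n}e^{-i/n}e^{-a/n}\bigr)^{n^2}\in \overline{(\G_I,\G_A)}
\]
for every $i\in I$ and $a\in A$. A general element of $[I,A]$ has the form $\sum_k [c_k i_k, a_k]$ with $c_k i_k \in I$, so Trotter's formula \eqref{trotter} (iterated over the summands) expresses $e^{\sum_k[c_k i_k, a_k]}$ as a limit of products of the $e^{[c_k i_k, a_k]/n}$, each in $\overline{(\G_I,\G_A)}$. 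This yields $e^{[I,A]} \subseteq \overline{(\G_I,\G_A)}$ and therefore $H\subseteq \overline{(\G_I,\G_A)}$.

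For the reverse inclusion, I would first observe that $H$ is normal in $\G_A$: given $g\in \G_A$, $g e^{[i,a]} g^{-1}=e^{[gig^{-1},gag^{-1}]}\in e^{[I,A]}$ since $I$ is a two-sided ideal. The crux is then to show $(e^i,e^a)\in H$ for arbitrary $i\in I$, $a\in A$. Choosing $N$ large enough that the Campbell--Baker--Hausdorff series converges on $\{i/N,a/N\}$, one writes
\[
(e^{i/N},e^{a/N})=e^{\xi_N},
\]
with $\xi_N$ a norm-convergent sum of iterated commutators in $i/N$ and $a/N$. Any nonzero such iterated commutator involves at least one $i$, and because $[I,A]\subseteq I$ is itself a Lie ideal of $A$ (so $[A,[I,A]]\subseteq [I,A]$), an induction on bracket depth places each of these iterated commutators in $[I,A]$. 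Hence $\xi_N\in \overline{[I,A]}$ and, by continuity of $\exp$ together with the closedness of $H$, $e^{\xi_N}\in H$. Writing $e^i=(e^{i/N})^N$ and $e^a=(e^{a/N})^N$ and applying \eqref{breakprod} in both slots of the commutator then expresses $(e^i,e^a)$ as a product of $\G_A$-conjugates of $(e^{i/N},e^{a/N})$; all such conjugates lie in $H$ by normality.

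To finish, for arbitrary $x\in \G_I$ and $y\in \G_A$ I would write $x=\prod_j e^{x_j}$ with $x_j\in I$ and $y=\prod_l e^{y_l}$ with $y_l\in A$, and apply \eqref{breakprod} again to decompose $(x,y)$ into a product of $\G_A$-conjugates of the elementary commutators $(e^{x_j},e^{y_l})$, each in $H$. This gives $(\G_I,\G_A)\subseteq H$, and closedness of $H$ yields $\overline{(\G_I,\G_A)}\subseteq H$. I expect the main obstacle to be the Campbell--Baker--Hausdorff step: one must verify carefully that the partial sums of the bracket expansion lie in $[I,A]$ (so the limit lies in $\overline{[I,A]}$) rather than merely in $\overline{[A,A]}$, which is exactly what the Lie-ideal property $[A,[I,A]]\subseteq [I,A]$ provides. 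Once that is in hand, the remaining commutator-identity bookkeeping is routine.
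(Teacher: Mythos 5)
Your argument is correct, and while its overall skeleton matches the paper's (show the closed subgroup $H$ generated by $e^{[I,A]}$ is normal, reduce to exponential generators via \eqref{breakprod}, and take logarithms of group commutators of small elements), the key step goes by a genuinely different route. For the inclusion $e^{[I,A]}\subseteq\overline{(\G_I,\G_A)}$ the paper rewrites elements of $[I,A]$ as sums of $hah^{-1}-a$ with $h\in\G_I$ and applies Trotter's formula \eqref{trotter} twice, whereas you use the power-commutator limit $e^{[i,a]}=\lim_n(e^{i/n},e^{a/n})^{n^2}$ plus Trotter for sums; both are standard and equally short. The real divergence is in the reverse inclusion: the paper keeps $a\in\G_A$ arbitrary, sets $c=\log((a,e^b))$ for small $b\in I$, notes $c\in I$ because $(a,e^b)\in 1+I$ and $c\in\overline{[A,A]}$ by \eqref{CBH1}, and then invokes Miers' identity $\overline{[A,A]}\cap I=\overline{[I,A]}$ \eqref{mierseq} to land in $\overline{[I,A]}$. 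You instead exponentiate both slots, expand $\log((e^{i/N},e^{a/N}))$ as a convergent Lie series, and observe that every surviving bracket monomial contains an entry from $I$ and hence lies in $[I,A]$ (the degree-one terms cancel, pure-$a$ brackets vanish, and $[A,[I,A]]\subseteq[I,A]$ since $[I,A]\subseteq I$), so the logarithm lies in $\overline{[I,A]}$ directly. This buys you independence from Miers' lemma at the cost of needing the Dynkin (Lie-bracket) form of the Campbell--Baker--Hausdorff series for the group commutator with norm convergence -- machinery the paper in fact sets up in its appendix -- plus the extra bookkeeping of breaking both slots of $(e^i,e^a)$ into $N$-th roots via \eqref{breakprod} and its second-slot analogue, which you handle correctly using normality of $H$.
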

\begin{proof}
Let  us first show that $e^c\in \overline{(\G_I,\G_A)}$ for all $c\in [I,A]$.
Let $c\in [I,A]$.
Since  $\G_I$ spans $I^\sim$, $[I,A]=[\G_I,A]$. Moreover,
from the formula  $[h,a]=h(ah)h^{-1}-ah$
 we see that the elements of $[\G_I,A]$ are expressible as sums of elements 
of the form $hah^{-1}-a$, with $h\in \G_I$ and $a\in A$. Thus, $c$ is a finite sum of such elements. By
 Trotter's formula \eqref{trotter}, to prove that $e^c\in  \overline{(\G_I,\G_A)}$ 
 we may reduce ourselves to the case  that $c=hah^{-1}-a$ for some $h\in \G_I$ and $a\in A$.  Trotter's formula again yields that $e^{hah^{-1}-a}$   is the limit of $(he^{a/n}h^{-1}e^{-a/n})^n=(h,e^{a/n})^n\in (\G_I,\G_A)$, as desired.

Let us now show that $\overline{(\G_I,\G_A)}$ is contained in the closed subgroup generated by $e^{[I,A]}$. Call this subgroup $H$. Observe that $H$ is normal  by the  invariance of $e^{[I,A]}$ under conjugation by $\G_A$. Let $g\in \G_I$ and $a\in \G_A$ and let us 
show that $(g,a)\in H$. Writing  $g$
	as a product of exponentials in $\G_I$ and using \eqref{breakprod}  we reduce ourselves to the case that $g=e^b$ for some $b\in I$. We can also assume that
	$b$ is very small. Let us choose $b$ small enough---depending only on $a$, which we regard as fixed---so that  $c:=\log ((a,e^b))$ is defined.  Since $(a,e^b)=e^{aba^{-1}}e^{-b}$ is of the form $1+x$  with $x\in I$, we have $c\in I$. Moreover, choosing $b$ small enough we can apply \eqref{CBH1} and get  
$c-(aba^{-1}-b)\in \overline{[A,A]}$. Thus, $c\in \overline{[A,A]}\cap I$. But 
	\begin{equation}\label{mierseq}
	\overline{[A,A]}\cap I=\overline{[I,A]},
	\end{equation} 
	by \cite[Lemma 1]{miers}.  Hence, $c\in \overline{[I,A]}$, and so  $e^c$
belongs to the closure of $e^{[I,A]}$, as desired.
\end{proof}

\begin{lemma}\label{NI}	Let $I$ be a closed two-sided ideal of $A$. 
Let $g\in \G_A$. If $(g,\G_A)\subseteq I^\sim$ then $(g,\G_A)\subseteq \overline{(\G_I,\G_I)}$.
\end{lemma}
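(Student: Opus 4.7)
The plan is to imitate the second half of the proof of Lemma~\ref{idealnormal}, with one crucial refinement: the Lie-algebra element $\log((g,e^b))$ must be shown to land in $\overline{[I,I]}$, not merely in $\overline{[I,A]}$. Since $\G_I\triangleleft \G_A$, the subgroup $\overline{(\G_I,\G_I)}$ is normal in $\G_A$, so I may freely pass to products and $\G_A$-conjugates inside it. Using the cocycle identity $(g,xy) = (g,x)\cdot x(g,y)x^{-1}$ together with \eqref{breakprod}, and writing $a = e^{b_1}\cdots e^{b_n}\in \G_A$, $(g,a)$ becomes a product of $\G_A$-conjugates of the $(g,e^{b_k})$'s; splitting each exponential further via $e^{b_k} = (e^{b_k/m})^m$ reduces to the case $a = e^b$ with $\|b\|$ arbitrarily small.

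For such $b$, set $i := gbg^{-1}-b$ and $c := \log((g,e^b))$, so that $(g,e^b) = e^{b+i}e^{-b}$. The hypothesis together with the canonical character $A^\sim\to\C$ places $(g,e^{tb})$ in $1+I$ for every $t$; hence $c\in I$ (for $b$ small), and differentiating at $t=0$ yields $[g,b]g^{-1}\in I$, i.e.\ $i\in I$. Independently, $i\in \overline{[A,A]}$: writing $g = e^{x_1}\cdots e^{x_m}$ and iterating the identity $F_{g_1g_2}(b) = g_1F_{g_2}(b)g_1^{-1}+F_{g_1}(b)$ (with $F_g(b) := gbg^{-1}-b$), one reduces to $F_{e^x}(b) = [x,b]+\tfrac{1}{2}[x,[x,b]]+\cdots\in \overline{[A,A]}$. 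Now \eqref{CBH1} applied to the pair $(b+i,-b)$ gives $c - i\in \overline{[A,A]}$, hence $c\in \overline{[A,A]}$. Combined with $c\in I$, Miers' lemma yields $c\in \overline{[A,A]}\cap I = \overline{[I,A]}$.

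The crucial upgrade is $\overline{[I,A]} = \overline{[I,I]}$: for $j\in I$, $a\in A$, and a bounded approximate unit $(e_\lambda)$ of $I$, the commutators $[j,e_\lambda a e_\lambda]$ lie in $[I,I]$ and converge in norm to $[j,a]$ (since $je_\lambda a e_\lambda\to ja$ and $e_\lambda a e_\lambda j\to aj$ as $\lambda\to\infty$). Hence $c\in \overline{[I,I]}$. To deduce $e^c\in \overline{(\G_I,\G_I)}$, I mimic the first part of Lemma~\ref{idealnormal}: $[I,I]$ is spanned by elements $hjh^{-1}-j$ with $h\in \G_I$ and $j\in I$, and $e^{hjh^{-1}-j} = \lim_n (h,e^{j/n})^n\in (\G_I,\G_I)$ (using $e^{j/n}\in \G_I$); Trotter's formula \eqref{trotter} then gives $e^c\in \overline{(\G_I,\G_I)}$. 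The principal obstacle is exactly the upgrade $\overline{[I,A]} = \overline{[I,I]}$; without it the argument would yield only $(g,\G_A)\subseteq \overline{(\G_I,\G_A)}$, the weaker conclusion of Lemma~\ref{idealnormal}.
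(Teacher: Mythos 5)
Your overall architecture follows the paper's: reduce to $a=e^b$ with $b$ small via \eqref{breakprod} and normality, set $c=\log((g,e^b))$, show $c\in I\cap\overline{[A,A]}=\overline{[I,A]}$ using \eqref{CBH1} and \eqref{mierseq}, and exponentiate back. The one genuine gap is the step where you place $(g,e^{tb})$ in $1+I$ ``by the canonical character $A^\sim\to\C$''. That character is available only when $A$ is non-unital (so that every element of $\G_A$ has the form $1+x$ with $x\in A$ and the character kills $A$); the lemma is needed for unital $A$ as well, where $A^\sim=A$ may be simple and admit no characters at all. In the unital case the hypothesis only says that $(g,e^{tb})$ is some scalar $\lambda(t)$ modulo $I$, and a multiplicative commutator in a unital $\Cstar$-algebra can be a nontrivial scalar (e.g.\ $uvu^{-1}v^{-1}=e^{2\pi i\theta}$ in an irrational rotation algebra), so $\lambda(t)=1$ has to be argued, not read off. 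The repair is exactly the paper's move, using an ingredient you already have but apply too late: differentiate first. Each difference quotient of $t\mapsto (g,e^{tb})$ lies in the closed subspace $I^\sim$, so $gbg^{-1}-b\in I^\sim$; since $gbg^{-1}-b=[g,bg^{-1}]$ is an additive commutator and a commutator cannot be a nonzero scalar multiple of the identity, its image in the quotient by $I$ vanishes, giving $gbg^{-1}-b\in I$, hence $(g,e^b)\in 1+I$ and $c\in I$. With that reordering your argument closes in all cases.

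Apart from this, your proof is correct and in fact delivers the literal conclusion $\overline{(\G_I,\G_I)}$, which is more than the paper's own proof establishes: the argument in the paper only yields $(g,\G_A)\subseteq\overline{(\G_I,\G_A)}$, which is the form actually invoked later (e.g.\ in Theorem \ref{commequal} and Theorem \ref{subnormal}). Your upgrade rests on the identity $\overline{[I,A]}=\overline{[I,I]}$, proved correctly with a bounded approximate unit ($[j,e_\lambda a e_\lambda]\to[j,a]$ in norm), after which the first half of Lemma \ref{idealnormal} runs verbatim with both entries in $\G_I$: $[I,I]$ is spanned by elements $hjh^{-1}-j$ with $h\in\G_I$, $j\in I$, and Trotter's formula \eqref{trotter} finishes. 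That refinement is sound and worth keeping.
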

\begin{proof}
Let $g\in \G_A$ be as in the statement of the lemma and let $a\in \G_A$. To prove that $(g,a)\in \overline{(\G_I,\G_A)}$ it suffices to choose $a$ from a generating set of $\G_A$ (by \eqref{breakprod}).  Let us pick $a=e^{b}$, with $b$   small so that $c:=\log((g,e^b))$ is defined. We thus want to show that $e^c\in \overline{(\G_I,\G_A)}$. From 
\[
\lim_{t\to 0}\frac{(g,e^{tb})-1}{t}=gbg^{-1}-b,
\] 
and that the left hand side belongs to $I^\sim$, we deduce that $gbg^{-1}-b\in I^\sim$. Hence $[g,b]\in I^\sim$, which in turn implies that $[g,b]\in I$, for it is well known that a commutator cannot be  a non-zero scalar multiple of the identity. It follows that  $c\in I$. Moreover, choosing  $b$ small enough we also have  that $c\in \overline{[A,A]}$  by \eqref{CBH1}. So $c\in \overline{[I,A]}$ by \eqref{mierseq}.  Hence, $e^c\in \overline{(\G_I,\G_A)}$ by Lemma \ref{idealnormal}.  
	\end{proof}

\begin{remark}
The previous lemma shows that the group  $\mathrm N_I$ defined in \eqref{defNI} (in the introduction) admits a somewhat more concrete description
	as the preimage by the quotient map $A^\sim\to A^\sim/I$ of the center of  
	$\G_{A/I}$.	
\end{remark}

\begin{proof}[Proof of Theorem \ref{commequal}]
We deduce from the definition of $I$  that $(H,\G_A)\subseteq  I^\sim$ (since $H$ becomes  central after taking the quotient by $I$). The inclusion  $(H,A)\subseteq \overline{(\G_I,\G_A)}$ now  follows from Lemma \ref{NI}.  

	Let us prove that $(\G_I,\G_A)\subseteq \overline{(H,A)}$. By Lemma \ref{idealnormal}, it suffices to 
	show that $e^{[I,A]}\subseteq  \overline{(H,\G_A)}$.   Let
	\[
	L=\overline{\Lin\{hah^{-1}-a\mid h\in H, a\in A\}}.
	\]
We claim that $L=\overline{[I,A]}$. Before proving this claim, let us use it  to complete the proof of the theorem. Let $c\in [I,A]$. Since $c\in L$,
it can be approximated by a sum of 
elements of the form $hah^{-1}-a$, with $h\in H$ and $a\in A$.   By Trotter's formula, to prove that $e^c\in  \overline{(H,A)}$  we may reduce ourselves to the case that $c= hah^{-1}-a$ with  $h\in H$ and $a\in A$. Applying  Trotter's formula once more we see that 
  $e^{hah^{-1}-a}$  is the limit of  $(e^{h\frac a n h^{-1}}e^{-\frac a n})^n=(h,e^{\frac a n})^n\in (H,\G_A)$. Thus, $e^c\in \overline{(H,A)}$, as desired.

Let us prove our claim that $L=\overline{[I,A]}$. From the identities
 $hah^{-1}-a=[h,ah^{-1}]$ 
	and   $[h,a]=h(ah)h^{-1}-ah$ we deduce   that   $L=\overline{[H,A]}$. Thus,
we must   show that $\overline{[H,A]}=\overline{[I,A]}$. 
Since $H$ is normal, the closed subspace $\overline{\Lin(H)}$ is invariant under conjugation by elements of $\G_A$.   It is thus a Lie ideal of $A$ (see \cite[Theorem 2.3]{marcoux-murphy}). 
By \cite[Theorem 5.27]{BKS} applied to this  Lie ideal we have that
$\overline{[H,A]}=\overline{[J,A]}$,
where $J=\Id([\Lin(H),A])=\Id([H,A])=I$. This proves our claim.
\end{proof}

\begin{corollary}[Cf. \cite{herstein0}*{Theorem 4}]\label{dichotomy}
	Let $A$ be a simple $\Cstar$-algebra. Let $H$ be  a closed normal subgroup of $\G_A$. Then either $H$ is contained in the center of $\G_A$ or 
$\overline{(\G_A,\G_A)}\subseteq H$.
\end{corollary}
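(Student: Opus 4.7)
The plan is to derive the corollary as an essentially immediate consequence of Theorem \ref{commequal} together with the simplicity of $A$. The dichotomy in the conclusion matches the dichotomy for the closed two-sided ideal $I$ generated by $[H,A]$: simplicity forces $I=0$ or $I=A$, and these two cases will give the two alternatives.

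First I would set $I=\Id([H,A])$. Suppose $I=0$. Then $[H,A]=0$, so every $h\in H$ commutes with every $a\in A$. Writing $h=1+x$ with $x\in A$, this means $x$ lies in the center of $A$. Since $\G_A\subseteq 1+A$, $h$ also commutes with every element of $\G_A$, so $H\subseteq Z(\G_A)$. This disposes of the first alternative.

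Next, suppose $I=A$. By Theorem \ref{commequal},
\[
\overline{(H,\G_A)}=\overline{(\G_I,\G_A)}=\overline{(\G_A,\G_A)}.
\]
On the other hand, $H$ is normal in $\G_A$, so $(H,\G_A)\subseteq H$, and since $H$ is closed this gives $\overline{(H,\G_A)}\subseteq H$. Combining the two, $\overline{(\G_A,\G_A)}\subseteq H$, which is the second alternative.

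The only point that requires a moment's care is the conclusion $H\subseteq Z(\G_A)$ in the first case when $A$ is non-unital, but this is immediate from $\G_A\subseteq 1+A$. No other step presents a real obstacle; the work has already been done in establishing Theorem \ref{commequal}.
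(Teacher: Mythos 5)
Your proof is correct and follows essentially the same route as the paper: apply Theorem \ref{commequal} with $I=\Id([H,A])$, use simplicity to force $I=0$ (giving centrality) or $I=A$ (giving $\overline{(\G_A,\G_A)}=\overline{(H,\G_A)}\subseteq H$ by normality and closedness of $H$). The only difference is cosmetic—you case-split on $I$ while the paper assumes $H$ non-central and notes $I\neq 0$—and your extra remarks on the non-unital case and the $I=0$ case merely make explicit what the paper leaves implicit.
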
	
\begin{proof}
	Assume that $H$ is non-central. Then $\Id([H,A])\neq 0$, and so $\Id([H,A])=A$ by the simplicity of $A$. By the previous theorem, $\overline{(\G_A,\G_A)}=\overline{(H,\G_A)}\subseteq H$.
\end{proof}

For the remainder of this section we explore further properties of the normal subgroups of $\G_A$ relying on Theorem \ref{commequal} and the circle of ideas used to prove it.

Let $N_2$  denote the set of square zero elements of $A$. That is, $N_2=\{x\in A\mid x^2=0\}$. It is shown in  
\cite[Corollary 2.3]{lie} that $\overline{\Lin(N_2)}=\overline{[A,A]}$. In Theorem \ref{U2} below we prove the multiplicative analogue of this result.

\begin{lemma}\label{N2comm}
Let $x\in N_2$. Then
$1+x=(u,v)$  for some $u,v\in (\G_A,\G_A)$.
\end{lemma}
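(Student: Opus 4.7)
My plan is to realize $1+x$ as the commutator $(1+a,\,1+b)$ for a factorization $x = ab$ in $A$ with $ba = 0$ and $a^2 = b^2 = 0$. Under these four relations, a direct expansion gives
$$(1+a)(1+b)(1-a)(1-b) = 1 + ab = 1 + x,$$
using $(a+b)^2 = ab$, $(ab)^2 = 0$, and $[a+b,\,ab] = 0$, all of which follow from the assumed relations. Equivalently, Campbell--Baker--Hausdorff truncates here: since $ax = a(ab) = a^2 b = 0$ and $xa = (ab)a = a(ba) = 0$, and symmetrically for $b$, the bracket $x = [a,b]$ commutes with both $a$ and $b$. Hence $e^a e^b = e^{a+b+x/2}$, $e^{-a} e^{-b} = e^{-a-b+x/2}$, these two exponents commute (both of their brackets with $x$ vanish), and their product is $e^x = 1+x$.

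To construct $a$ and $b$, I would start from the polar decomposition $x = w|x|$ in $A^{**}$; since $x^2 = 0$, the projections $p = w^*w$ and $q = ww^*$ are orthogonal and $w^2 = 0$. The natural guess $a_0 = w|x|^{1/2}$, $b_0 = |x|^{1/2}$ achieves $a_0 b_0 = x$, $b_0 a_0 = 0$, $a_0^2 = 0$ but fails $b_0^2 = |x| \neq 0$. The remedy is a further splitting of $|x|$ as a product of two mutually annihilating square-zero elements, which one arranges after passing to $M_2(A)$ (or a suitable corner of the multiplier algebra) where such a decomposition is available, and then pulls the resulting commutator identity back to $A$. Once the splitting is in hand, the membership of $1+a$ and $1+b$ in $(\G_A, \G_A)$ is a statement of the same form as the lemma for the simpler elements $a, b$, and can be handled iteratively by exploiting the extra structure these carry --- they factor through the partial isometry $w$ and therefore admit explicit further decompositions of the required type that $x$ itself need not.

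The main obstacle is securing the fourth relation $b^2 = 0$ inside $A$ itself: the polar decomposition naturally supplies only three of the four relations, and the passage through the auxiliary algebra must be set up so that the final identity exhibits $1+x$ as a commutator of elements of $(\G_A, \G_A)$. Once that splitting is achieved, the algebraic identity above plus the iterative upgrade of commutator-subgroup membership together yield the result.
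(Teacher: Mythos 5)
There is a genuine gap, and it is at the heart of your construction: the factorization $x=ab$ with $a^2=b^2=0$, $ba=0$, $ab=x$ simply does not exist inside $A$ in general. Already in $A=M_2(\C)$ with $x=e_{12}$ it is impossible: if $a,b$ are square-zero then they are rank-one nilpotents, $a=\xi\eta^*$, $b=\mu\nu^*$ with $\eta^*\xi=\nu^*\mu=0$, so $ab=(\eta^*\mu)\,\xi\nu^*$; forcing $ab=e_{12}$ forces $\xi\propto e_1$ and $\nu\propto e_2$, hence $\eta\propto e_2$ and $\mu\propto e_1$, which gives $\eta^*\mu=0$ and $ab=0$, a contradiction (even before imposing $ba=0$). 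Your proposed remedy -- passing to $M_2(A)$ or a corner of the multiplier algebra, where the extra room does make such a splitting available -- proves the wrong statement: the lemma asserts $1+x=(u,v)$ with $u,v\in(\G_A,\G_A)$, i.e.\ in the commutator subgroup of the invertibles of $A$ itself, and a commutator identity for $\mathrm{diag}(1+x,1)$ in $\G_{M_2(A)}$ does not ``pull back'' to $\G_A$ by any general mechanism. (The same problem afflicts the use of the partial isometry $w$ from the polar decomposition in $A^{**}$: elements built from $w$ need not lie in $A$.) The paper's proof sidesteps exactly this by working inside the $\Cstar$-subalgebra of $A$ generated by $x$, i.e.\ in a quotient of the universal algebra $M_2(C_0(0,M])$, so every invertible it constructs is of the form $1+y$ with $y\in A$ and lies in $\G_A$; there $1+x$ is written explicitly as a commutator of a unipotent upper-triangular matrix and a diagonal matrix $\mathrm{diag}(g,g^{-1})$, each of which is in turn exhibited as a single commutator by explicit $2\times2$ identities using functional calculus.

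A second, smaller gap: even granting a splitting, your route to the stronger requirement $u,v\in(\G_A,\G_A)$ is only sketched (``handled iteratively by exploiting the extra structure''), with no mechanism making the iteration terminate and a real risk of circularity, since ``$1+a\in(\G_A,\G_A)$ for $a$ square-zero'' is a statement of the same shape as the lemma itself. This is precisely the part of the argument the paper must (and does) do by hand, via the explicit commutator expression for the diagonal factor $\mathrm{diag}(g,g^{-1})$ using $h$ with $1+h^2=g$. Your opening algebraic identity $(1+a,1+b)=1+ab$ under the four relations is correct, but without a factorization living in $A$ and without a concrete argument placing the two factors in $(\G_A,\G_A)$, the proof does not go through.
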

\begin{proof}
Let $M=\|x\|$. It suffices to prove the result in the universal $\Cstar$-algebra generated by a square zero element of norm at most $M$. Thus,  we may assume that $A=M_2(C_0(0,M])$ and that $x=\left(\begin{smallmatrix} 0 & t\\ 0 & 0\end{smallmatrix}\right)$, where $t$ denotes the identity function on $(0,M]$. 
 Note that, since the group of invertibles of $M_2(C_0(0,M]^\sim$ is connected, all the invertible elements that we  construct below are in the connected component of the identity.

Let $f=t^{\frac 1 2}$ and  $g=(1+t^{\frac 1 2})^{\frac 1 2}$, so that 
$f\cdot (g^2-1)=t$. Then
\[
\begin{pmatrix}
1 & t\\
0 & 1
\end{pmatrix}=
\left(
\begin{pmatrix}
g & 0\\
0 & g^{-1}
\end{pmatrix},
\begin{pmatrix}
1 & f\\
0 & 1
\end{pmatrix}
\right).
\]
The element $\left(\begin{smallmatrix} 1 & f\\ 0 & 1\end{smallmatrix}\right)$
is a commutator for the same reason that  $\left(\begin{smallmatrix} 1 & t\\ 1 & 0\end{smallmatrix}\right)$ is one. It remains 
 to express  $\left(\begin{smallmatrix} g & 0\\ 0 & g^{-1}\end{smallmatrix}\right)$ as a commutator.  Let $h=((1+t^{\frac 1 2})^{\frac 1 2}-1)^{\frac 1 2}$, so that $1+h^2=g$.  A straightforward computation shows that
\[
\begin{pmatrix}
g  & 0\\
0 & g^{-1}
\end{pmatrix}=
\left(
\begin{pmatrix}
g^{\frac 1 2} & h\\
0 & g^{-\frac 1 2}
\end{pmatrix},
\begin{pmatrix}
g^{-\frac 1 2} & 0\\
h & g^{\frac 1 2}
\end{pmatrix}
\right).\qedhere
\] 
\end{proof}

A  group $G$ is called perfect if $G=(G,G)$. If $G$ is a topological group then it is called  topologically perfect if $G=\overline{(G,G)}$. 

\begin{theorem}\label{U2}
The set $1+N_2$ generates $\overline{(\G_A,\G_A)}$ as a topological group. The group  $\overline{(\G_A,\G_A)}$ is topologically perfect.
\end{theorem}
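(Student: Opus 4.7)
The plan is to leverage Theorem \ref{commequal} together with the identification $\overline{[A,A]}=\overline{\Lin(N_2)}$ from \cite[Corollary 2.3]{lie} to convert the multiplicative generation problem into an additive one, and then pass back to $\G_A$ via Trotter's formula. One inclusion is essentially free: by Lemma \ref{N2comm} every element of $1+N_2$ is a commutator in $(\G_A,\G_A)$, so if $H_0$ denotes the closed subgroup of $\G_A$ generated by $1+N_2$, then $H_0\subseteq \overline{(\G_A,\G_A)}$.

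For the reverse inclusion I would apply Theorem \ref{commequal} with $H=\G_A$. Since $\G_A$ linearly spans $A^\sim$ (it contains $1+a$ for all sufficiently small $a\in A$), we have $[\G_A,A]=[A,A]$, and the theorem yields $\overline{(\G_A,\G_A)}=\overline{(\G_I,\G_A)}$ with $I=\Id([A,A])$. By Lemma \ref{idealnormal} this group is topologically generated by $e^{[I,A]}$, so it suffices to prove $e^c\in H_0$ for every $c\in [I,A]$. Given such a $c$, note that $c\in \overline{[A,A]}=\overline{\Lin(N_2)}$, and since $N_2$ is stable under scalar multiplication, $c$ is a norm-limit of finite sums $\sigma=\sum_i n_i$ with $n_i\in N_2$. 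As $e^{n/m}=1+n/m\in 1+N_2$ for every $n\in N_2$ and $m\in\N$, iterated application of Trotter's formula \eqref{trotter} writes $e^\sigma$ as a limit of products of elements of $1+N_2$, putting $e^\sigma\in H_0$; continuity of the exponential then delivers $e^c\in H_0$.

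Topological perfectness follows immediately from the first claim together with Lemma \ref{N2comm}: the latter places $1+N_2$ inside the commutator subgroup $(\overline{(\G_A,\G_A)},\overline{(\G_A,\G_A)})$, so the closed subgroup it generates, which by the first claim is $\overline{(\G_A,\G_A)}$ itself, lies in $\overline{(\overline{(\G_A,\G_A)},\overline{(\G_A,\G_A)})}$. The only technical point I anticipate is the double-limit nature of the Trotter approximation, but this is a routine consequence of the joint continuity of multiplication and of the exponential map.
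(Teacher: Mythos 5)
Your proof is correct, and for the hard inclusion it takes a route that differs in mechanism from the paper's. The paper notes that $U_2$, the closed subgroup generated by $1+N_2$, is \emph{normal} (since $1+N_2$ is invariant under similarities) and applies Theorem \ref{commequal} twice, to $U_2$ and to $\G_A$; the crux there is the identification of the two associated ideals, $\Id([N_2,A])=\Id([\G_A,A])=\Id([A,A])$ (using the fullness consequence of \cite[Corollary 2.3]{lie}), which gives $\overline{(\G_A,\G_A)}=\overline{(U_2,\G_A)}\subseteq U_2$ by normality and closedness. You instead apply Theorem \ref{commequal} only to $\G_A$, reduce via Lemma \ref{idealnormal} to placing the exponentials $e^c$, $c\in[I,A]$, inside $H_0$, and then use the density statement $\overline{\Lin(N_2)}=\overline{[A,A]}$ (the same citation, but a stronger consequence of it) together with $e^n=1+n$ for $n\in N_2$ and the iterated Trotter formula. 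What each buys: your argument never needs $H_0$ to be normal, avoids the second application of Theorem \ref{commequal}, and actually proves the slightly stronger fact that $e^c\in H_0$ for every $c\in\overline{[A,A]}$; the paper's argument avoids the approximation/Trotter bookkeeping entirely (the "double-limit" point you flag, which is indeed routine given the remark following \eqref{trotter} that $e^{a+b}$ lies in the closed subgroup generated by $e^{a/n}$, $e^{b/n}$) and only needs the ideal-level information $\Id([N_2,A])=\Id([A,A])$ rather than density of $\Lin(N_2)$ in $\overline{[A,A]}$. Your treatment of the easy inclusion and of topological perfectness via Lemma \ref{N2comm} coincides with the paper's.
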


\begin{proof}
	Let $U_2$ denote the closed subgroup of $\G_A$ generated by  $1+N_2$. Since $1+N_2$ is invariant under similarities, $U_2$ is  normal.  Theorem \ref{commequal} applied  to $U_2$ gives us that  $\overline{(U_2,\G_A)}=\overline{(\G_I,\G_A)}$, where  $I=\Id([U_2,A])$. 	 Theorem \ref{commequal} applied to the group $\G_A$ gives us that
$\overline{(\G_A,\G_A)}=\overline{(\G_J,\G_A)}$, where $J=\Id([\G_A,A])$. 
Let us show that $I=J=\Id([A,A])$. The equality $[\G_A,A]=[A,A]$ is clear since $\G_A$ spans $A^\sim$.  Thus, $J=\Id([A,A])$.  To see that $I=\Id([A,A])$  we first use the identity
	\[
	[xy,a]=[x,ya]+[y,ax],
	\]
to deduce that   $\overline{[U_2,A]}=\overline{[1+N_2,A]}=\overline{[N_2,A]}$. Hence,   	$I=\Id([N_2,A])$. But $\Id([N_2,A])=\Id([A,A])$, by \cite[Corollary 2.3]{lie}.  So $I=\Id([A,A])$.
Now we have
\[
\overline{(\G_A,\G_A)}=\overline{(U_2,\G_A)}\subseteq U_2.
\]
  On the other hand,   $U_2\subseteq \overline{((\G_A,\G_A), (\G_A,\G_A))}$  by Lemma \ref{N2comm}. This proves the two claims of the theorem.
\end{proof}

The inclusion $(\G_A,\G_A)\subseteq \overline{(\G_A,\G_A)}$ is often proper (e.g., take $A$ an irrational rotation $\Cstar$-algebra). Thus,  $\overline{(\G_A,\G_A)}$ may fail to be perfect in the algebraic sense. We will show in Section \ref{nonclosedGA} that $(\G_A,\G_A)$ is perfect whenever $A$ is unital and without 1-dimensional representations.

Recall that a subgroup $H\subseteq \G_A$ is called subnormal if there exists
a finite chain of subgroups $H\subseteq H_1\cdots\subseteq H_n=\G_A$, each normal in the next.
\begin{theorem}\label{subnormal}
The closed subnormal subgroups of $\G_A$ are normal.
\end{theorem}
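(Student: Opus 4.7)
The plan is an induction on the length $n$ of the subnormal chain $H = H_0 \triangleleft H_1 \triangleleft \cdots \triangleleft H_n = \G_A$. I would first reduce to a chain of closed subgroups: in a topological group the closure of a subgroup normal in some overgroup is normal in the closure of the overgroup, so replacing each $H_i$ by $\overline{H_i}$ yields another subnormal chain, and since $H_0 = H$ is already closed the chain still starts at $H$. Applying the induction hypothesis to the subchain $H_1 \triangleleft \cdots \triangleleft H_n = \G_A$ of length $n-1$ then gives $H_1 \triangleleft \G_A$, reducing the proof to the case $n = 2$: $H$ closed, normal in $K$, with $K$ closed normal in $\G_A$.

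For $n = 2$, let $\widetilde H$ denote the closed normal closure of $H$ in $\G_A$. Normality of $K$ forces $\widetilde H \subseteq K$, and normality of $H$ in $K$ then forces $H \triangleleft \widetilde H$; so it suffices to prove $\widetilde H = H$. Setting $I = \Id([H, A])$, which equals $\Id([\widetilde H, A])$ by the $\G_A$-invariance of the ideal, Theorem~\ref{commequal} applied to the closed normal subgroup $\widetilde H$ gives $\overline{(\G_I, \G_A)} \subseteq \widetilde H \subseteq \G_{A, I}$. In particular $H \subseteq \widetilde H \subseteq \G_{A, I}$, so it remains to show $\overline{(\G_I, \G_A)} \subseteq H$; once this is established, $H$ itself is sandwiched between $\overline{(\G_I, \G_A)}$ and $\G_{A, I}$ and is therefore normal in $\G_A$ by the converse observation recorded after Theorem~\ref{commequal}, forcing $\widetilde H = H$.

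The main obstacle is thus the inclusion $\overline{(\G_I, \G_A)} \subseteq H$. The strategy is to mimic the generation step in the proof of Theorem~\ref{commequal} but using only commutators of $H$ with $K$, which lie in $H$ by the normality $H \triangleleft K$. Setting
\[
\mathfrak{k} = \{a \in A : e^{ta} \in K \text{ for all } t \in \R\},
\]
a closed Lie ideal of $A$ containing $\overline{[I_K, A]}$ by Lemma~\ref{idealnormal} applied to $K$, one also has $[H, A] \subseteq I_K \cap \overline{[A, A]} = \overline{[I_K, A]} \subseteq \mathfrak{k}$ via Miers' identity \eqref{mierseq}. For $h \in H$ and $a \in \mathfrak{k}$, the commutator $(h, e^{a/n})$ lies in $(H, K) \subseteq H$, and Trotter's formula \eqref{trotter} gives $e^{hah^{-1} - a} \in H$; taking closed products extends this to $e^L \subseteq H$, where $L$ is the closed linear span of $\{hah^{-1} - a : h \in H,\, a \in \mathfrak{k}\}$.

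The crux, which I expect to be the hardest step, is to show $L \supseteq \overline{[I, A]}$; by Lemma~\ref{idealnormal} this yields $\overline{(\G_I, \G_A)} \subseteq H$ as required. The argument should parallel the Lie-ideal analysis in the proof of Theorem~\ref{commequal}: $L$ is a closed subspace controlled by the commutators $[H, A]$, and the containment $[H, A] \subseteq \mathfrak{k}$ allows the iterated brackets needed to exhaust $\overline{[I, A]}$ to stay inside $\mathfrak{k}$, so the Bre\v{s}ar--Kissin--Shulman structure theorem for closed Lie ideals (invoked as in Lemma~1.6 of \cite{lie}) can be used to identify $L$ with $\overline{[I, A]}$. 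Verifying this identification carefully, and pushing the Trotter machinery to capture the whole of $\overline{[I, A]}$ inside $H$, is where the heart of the proof lies.
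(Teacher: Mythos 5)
Your reduction to the two-step case and the soft half of the argument are sound and essentially the same as the paper's: take closures along the chain, induct on its length, and then, with $I=\Id([H,A])$, obtain $H\subseteq \G_{A,I}$ (you route this through the normal closure $\widetilde H$ and Theorem \ref{commequal}; the paper gets it directly from Lemma \ref{NI}) so that everything reduces to the single inclusion $\overline{(\G_I,\G_A)}\subseteq H$. The Trotter argument giving $e^{hah^{-1}-a}\in H$ is also the paper's mechanism; the paper takes $a\in[I,A]$, which lies in your $\mathfrak{k}$ because $\overline{(\G_I,\G_A)}\subseteq K$. One small caveat: $\mathfrak{k}$ is only a closed \emph{real} subspace of $A$, so if $L$ is the closed complex linear span of $\{hah^{-1}-a\mid h\in H,\ a\in\mathfrak{k}\}$, the claim $e^L\subseteq H$ is not immediate for complex scalar multiples of the generators; restricting $a$ to the complex subspace $[I,A]\subseteq\mathfrak{k}$, as the paper does, removes this wrinkle.

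The genuine gap is the step you defer: proving $\overline{[I,A]}\subseteq L$. This is the heart of the matter and it does not follow by simply ``paralleling'' the Lie-ideal analysis of Theorem \ref{commequal}. There, $a$ ranged over all of $A$, so $L=\overline{[H,A]}$ on the nose and \cite[Theorem 5.27]{BKS} applied to $\overline{\Lin(H)}$ finished the job; here $a$ is constrained, and two nontrivial points must be supplied. First, one must show that $L$ is a Lie ideal of $A$: differentiating conjugation by $e^{ta}$, $a\in[I,A]$, only yields $[L,[I,A]]\subseteq L$, and the paper needs an approximately central approximate unit of $I$ to upgrade this to $[L,[A,A]]\subseteq L$ before invoking \cite[Theorem 1.15]{lie}. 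Second, and more importantly, \cite[Lemma 1.6]{lie} identifies a closed Lie ideal $L\subseteq\overline{[A,A]}$ with $\overline{[\Id(L),A]}$ only under the hypothesis $\Id(L)=\Id([L,A])$; without knowing $\Id([L,A])=I$ you would merely get $L=\overline{[J,A]}$ for some possibly much smaller ideal $J$, which gives nothing. The paper proves $I\subseteq\Id([L,A])$ by a double application of Herstein's theorem ($[x,[x,L]]=0\Rightarrow[x,L]=0$, \cite[Theorem 1]{herstein}) in the quotient $A/\Id([L,A])$: there each element of (the image of) $H$ satisfies $[h,[h,a]]=0$ for all $a$ in the image of $[I,A]$, whence $H$ becomes central in the quotient, so $[H,A]\subseteq\Id([L,A])$ and hence $I\subseteq\Id([L,A])$. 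Nothing in your proposal supplies either ingredient, and a priori $L$ could be far smaller than $\overline{[I,A]}$ (for instance if $H$ nearly commuted with $[I,A]$); it is exactly the Herstein argument that rules this out. As written, you have a correct reduction plus an accurate diagnosis of where the difficulty sits, but the difficult step itself is missing.
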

\begin{proof}
Let $H$ be closed and subnormal. Taking the closure of all the subgroups in
the chain of subgroups starting at $H$ and ending in $\G_A$, we may assume that they are all closed.
It is then clear that it suffices to show that if $H\subseteq G\subseteq \G_A$ are closed subgroups such that $H$ is normal in $G$ and $G$ is normal in  $\G_A$ then $H$
is normal in $\G_A$. We prove this next.
Let $I=\Id([H,A])$.
We clearly have that $(H,\G_A)\subseteq I^\sim$, since $H$ is central in the quotient by $I$. By Lemma \ref{NI},  
\begin{enumerate}
\item[(1)]
$(H,\G_A)\subseteq \overline{(\G_I,\G_A)}$. 
\end{enumerate}
On the other hand,  since $G$ is normal in $\G_A$, we have   by Theorem \ref{commequal} that $\overline{(G,\G_A)}=\overline{(\G_J,\G_A)}$, where  $J=\Id([G,A])$. Since $I\subseteq J$,  
$\overline{(\G_I,\G_A)}\subseteq G$, and since $H$ is normal in $G$,  
\begin{enumerate}
\item[(2)]
$H$ is invariant under conjugation by $\overline{(\G_I,\G_A)}$.
\end{enumerate}
We will deduce from (1) and (2) that $H$ is normal in $\G_A$.  By (1),  it suffices to show that $\overline{(\G_I,\G_A)}\subseteq H$.  Moreover, since the exponentials $e^b$, with $b\in [I,A]$,
generate $\overline{(\G_I,\G_A)}$  (by Lemma \ref{idealnormal}), it suffices to show that such exponentials belong to $H$.
Let 
\[
L=\overline{\Lin\{hah^{-1}-a\mid h\in H,a\in [I,A]}\}.
\] 
We claim that $L=\overline{[I,A]}$. Before proving this claim, let us explain how to complete the proof of the theorem.  As stated above,
we want to show that $e^{c}\in H$ for $c\in [I,A]$. Approximating  $c$
by  sums of elements of the form $hah^{-1}-a$, with $h\in H$ and $a\in [I,A]$, and applying Trotter's formula,
we are reduced to showing that $e^{hah^{-1}-a}\in H$ for $h\in H$ and $a\in [I,A]$. Applying Trotter's formula once more, we are further reduced to showing that
$(h,e^a)\in H$ for all $h\in H$ and $a\in [I,A]$. This indeed holds, since  $e^a\in \overline{(\G_I,\G_A)}$ and $H$ is invariant under conjugation by
$\overline{(\G_I,\G_A)}$.

Let us now prove that $L=\overline{[I,A]}$. We first show that $L$ is a Lie ideal of $A$.
Notice that $L$ is invariant under conjugation by 
$\overline{(\G_I,\G_A)}$, since both $H$ and $[I,A]$ are. Let $a\in [I,A]$ and $l\in L$. Then $e^{ta}\in \overline{(\G_I,\G_A)}$  for all
$t\in \R$.  Thus, the elements on the left side of  
\[
\lim_{t\to 0}\frac{e^{ta}le^{-ta}-l}{t}=[a,l]
\] 
belong to $L$ for all $t\in \R$. It follows that $[L,[I,A]]\subseteq L$. Let $(e_\lambda)$ be an approximately central approximate unit for $I$. Using that $L\subseteq I$, we deduce that for all $l\in L$ and $a,b\in A$  
\[
[l,[a,b]]=\lim_\lambda [le_\lambda ,[a,b]]=\lim_\lambda [l,[ae_\lambda,b]]\in L.
\]
Thus, $[L,[A,A]]\subseteq L$. By \cite[Theorem 1.15]{lie}, $L$ is a Lie ideal of $A$.  It is clear from the definition of $L$ that $L\subseteq \overline{[A,A]}$. Thus, by \cite[Lemma 1.6]{lie}, in order to  prove that $L=\overline{[I,A]}$ 
it suffices to show that $\Id(L)=\Id([L,A])=I$. Let  $\widetilde A$ denote the quotient of $A$ by $\Id([L,A])$. Let $\widetilde H$ and $\widetilde I$  denote the images  of $H$ 
and $I$ in this quotient.
Let $h\in \widetilde H$ and $a\in [\widetilde I,\widetilde A]$.
	Then
	$h a h^{-1}-a$ is central in  $\widetilde A$ and in particular  commutes with $h$.  Hence, $h$ commutes with $(hah^{-1}-a)h=[h,a]$; i.e.,  
	$[h,[h,a]]=0$.  Since this holds for all $a\in [\tilde I,\tilde A]$, we have 
	$[h,[h,[\widetilde I,\widetilde A]]]=0$. By Herstein's \cite[Theorem 1]{herstein} applied to the Lie ideal $[\widetilde I,\widetilde A]$, we have $[h,[\widetilde I,\widetilde A]]=0$. 
In particular, $[h,[h,\widetilde I]]=0$, which implies that $[h,\widetilde I]=0$ (Herstein's theorem again).  Since 
$[h,\widetilde A]\subseteq \widetilde I$,   we get $[h,[h,\widetilde  A]]=0$, which implies that
 $[h,\widetilde A]=0$ for all $h\in \widetilde H$. That is, $[H,A]\subseteq \Id([L,A])$. Hence $I=\Id([H,A])\subseteq \Id([L,A])$. On the other hand, we clearly have that $L\subseteq I$ from the definition of $L$. Thus, $\Id(L)=\Id([L,A])=I$. This completes the proof of  $L=\overline{[I,A]}$, as claimed. 
\end{proof}

\begin{corollary}\label{essentiallysimple}
Let $A$ be  simple $\Cstar$-algebra. Then $\overline{(\G_A,\G_A)}$	
divided by its center is a topologically simple group.
\end{corollary}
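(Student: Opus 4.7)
The plan is to combine the subnormality theorem (Theorem \ref{subnormal}) with the dichotomy for closed normal subgroups in the simple case (Corollary \ref{dichotomy}), and then pull this back through the quotient by the center.

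Write $G = \overline{(\G_A,\G_A)}$, and let $Z$ denote its center. A topological group is topologically simple if it has no closed normal subgroups other than the trivial one and itself, so let $\tilde H$ be an arbitrary closed normal subgroup of $G/Z$ and set $H = \pi^{-1}(\tilde H)$, where $\pi \colon G \to G/Z$ is the quotient map. Then $H$ is a closed normal subgroup of $G$ containing $Z$, and it suffices to show that either $H = Z$ or $H = G$.

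Now $G$ is itself normal in $\G_A$ (as the closure of the commutator subgroup), so $H \subseteq G \subseteq \G_A$ is a chain in which each term is normal in the next; that is, $H$ is a closed subnormal subgroup of $\G_A$. By Theorem \ref{subnormal}, $H$ is actually normal in $\G_A$. Since $A$ is simple, Corollary \ref{dichotomy} applies to $H$ and yields the dichotomy that either $H$ is contained in the center of $\G_A$, or $\overline{(\G_A,\G_A)} = G \subseteq H$.

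In the first case, $H \subseteq Z(\G_A) \cap G \subseteq Z$, and combined with $Z \subseteq H$ this gives $H = Z$, hence $\tilde H$ is trivial. In the second case $H = G$ and $\tilde H = G/Z$. This exhausts the possibilities, so $G/Z$ is topologically simple. No step here is really an obstacle, since all the work has already been done in Theorem \ref{subnormal} and Corollary \ref{dichotomy}; the only mildly delicate point is to remember to pass from a closed normal subgroup of $G/Z$ to its preimage in $G$ before invoking the subnormality theorem, rather than trying to apply the dichotomy inside $G$ directly.
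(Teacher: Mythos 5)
Your proof is correct and follows essentially the same route as the paper: pass to a closed normal subgroup of $\overline{(\G_A,\G_A)}$ containing the center (the paper works with such a subgroup directly rather than pulling back from the quotient, but this is only cosmetic), upgrade it to a normal subgroup of $\G_A$ via Theorem \ref{subnormal}, and conclude with the dichotomy of Corollary \ref{dichotomy}. The handling of the central case, via $Z(\G_A)\cap \overline{(\G_A,\G_A)}\subseteq Z$, matches the paper's observation that a subgroup of $\overline{(\G_A,\G_A)}$ lying in the center of $\G_A$ lies in the center of $\overline{(\G_A,\G_A)}$.
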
	
\begin{proof}
	Let $H\subseteq \overline{(\G_A,\G_A)}$  be  a closed  normal subgroup of $\overline{(\G_A,\G_A)}$ properly containing  the  center of $\overline{(\G_A,\G_A)}$. By Theorem \ref{subnormal} $H$ is also normal
	in $\G_A$. Thus, by Corollary \ref{dichotomy}, either  $\overline{(\G_A,\G_A)}\subseteq H$ or $H$ is in the center of $\G_A$. But the latter case cannot be because $H$ is not  contained in the center of $\overline{(\G_A,\G_A)}$.
	Hence, $H=\overline{(\G_A,\G_A)}$.
\end{proof}

\begin{theorem}
If  $H$ is a closed subgroup of $\G_A$ normalized by $(\G_A,\G_A)$.  
Then $H$ is normal in $\G_A$. 
\end{theorem}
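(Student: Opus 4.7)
The plan is to reduce this to Theorem \ref{subnormal} by exhibiting $H$ as a subnormal subgroup of $\G_A$. The key observation is that the normalizer $N_{\G_A}(H)=\{g\in\G_A\mid gHg^{-1}\subseteq H\}$ is a closed subgroup of $\G_A$ whenever $H$ is closed (if $g_n\to g$ and each $g_n$ normalizes $H$, then $ghg^{-1}=\lim g_nhg_n^{-1}\in\overline{H}=H$). Since $N_{\G_A}(H)$ contains $(\G_A,\G_A)$ by hypothesis, it also contains $\overline{(\G_A,\G_A)}$; so $H$ is normalized by $\overline{(\G_A,\G_A)}$.

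Next, I would introduce the group $G:=H\cdot\overline{(\G_A,\G_A)}$. Since $\overline{(\G_A,\G_A)}$ is a (closed) normal subgroup of $\G_A$ (the closure of a normal subgroup is normal in any topological group), $G$ is indeed a subgroup of $\G_A$. Moreover $H$ is normal in $G$: it is normalized by itself and by $\overline{(\G_A,\G_A)}$. Passing to the closure $\overline{G}$, the normalizer of $H$ being closed and containing $G$ must contain $\overline{G}$, so $H\lhd\overline{G}$.

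I would then show that $\overline{G}$ is normal in $\G_A$. Since $\G_A/(\G_A,\G_A)$ is abelian, the further quotient $\G_A/\overline{(\G_A,\G_A)}$ is also abelian. As $\overline{G}\supseteq\overline{(\G_A,\G_A)}$, the image of $\overline{G}$ in this abelian quotient is automatically normal, and therefore $\overline{G}$ is normal in $\G_A$. This yields the chain $H\lhd\overline{G}\lhd\G_A$, so $H$ is a closed subnormal subgroup of $\G_A$. Applying Theorem \ref{subnormal} completes the argument.

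I do not foresee a genuine obstacle here: the only mildly delicate point is confirming that closure really does preserve the normalization property of $H$, which follows from $H$ being closed. Everything else is formal manipulation that invokes Theorem \ref{subnormal} as the substantive input.
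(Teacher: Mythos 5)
Your argument is correct, and it takes a different route from the paper's. The paper does not formally reduce the statement to Theorem \ref{subnormal}; instead it reruns that theorem's internal argument: setting $I=\Id([H,A])$, it establishes (1) $(H,\G_A)\subseteq \overline{(\G_I,\G_A)}$ via Lemma \ref{NI}, and (2) that $H$ is invariant under conjugation by $\overline{(\G_I,\G_A)}$ (because that group lies inside $\overline{(\G_A,\G_A)}$, which normalizes the closed group $H$), and then repeats the Lie-ideal computation with $L=\overline{\Lin\{hah^{-1}-a\mid h\in H,\,a\in[I,A]\}}$ to conclude. You instead build an explicit two-step chain: $H$ is normal in $G=H\cdot\overline{(\G_A,\G_A)}$, hence in $\overline{G}$ because the set of elements conjugating $H$ into $H$ is closed, and $\overline{G}$ is normal in $\G_A$ since it contains $(\G_A,\G_A)$; then you quote Theorem \ref{subnormal} as a black box. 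Your reduction buys brevity and stays at the level of formal topological group theory, while the paper's version makes visible that only normalization by the smaller group $\overline{(\G_I,\G_A)}$ is actually needed (a refinement your argument discards, though it is not needed for the statement). Two small points of hygiene: the one-sided set $\{g\in\G_A\mid gHg^{-1}\subseteq H\}$ is closed but is not obviously a subgroup, so either work with $\{g\mid gHg^{-1}=H\}$ (also closed when $H$ is closed) or note that for $g$ in the group $\overline{G}$ both $g$ and $g^{-1}$ conjugate $H$ into $H$, forcing equality; and your opening observation that closedness of $H$ upgrades normalization by $(\G_A,\G_A)$ to normalization by $\overline{(\G_A,\G_A)}$ is exactly the step the paper also uses implicitly, so it is worth stating as you did.
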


\begin{proof}
Let $I=\Id([H,A])$. As in the proof of Theorem \ref{subnormal}, we deduce that
\begin{enumerate}
	\item[(1)]
$(H,\G_A)\subseteq \overline{(\G_I,\G_A)}$.	
\end{enumerate}		
Also, since $H$ is normalized by $\overline{(\G_A,\G_A)}$,
\begin{enumerate}
	\item[(2)]
	$H$ is invariant under conjugation by  $\overline{(\G_I,\G_A)}$.
\end{enumerate}		
Now proceeding as in the proof of Theorem \ref{subnormal}
	 we deduce from (1) and (2) that $H$ is a normal subgroup of $\G_A$. 	
\end{proof}

\section{Closed normal subgroups of unitaries}\label{closedUA}
Let us denote the   unitary group of $A$  by $\U(A)$. If $A$ is non-unital, the same convention  from the  case of invertibles is applied to unitaries: they are chosen in $A^\sim$ and of the form $1+x$, with $x\in A$. Our focus in this section is on  the connected component of the identity in $\U(A)$, which we  denote by $\U_A$.
The closed normal subgroups of $\U_A$ can be handled in much the same way as 
those of  $\G_A$. So here our arguments  will be more succinct, mostly pointing out the necessary adaptations.

Let $A_{\sa}$ denote the set of selfadjoint elements of $A$.  Recall that $\{e^{ih}|\mid h\in A_{\sa}\}$ is a generating set of $\U_A$.  
For each closed two-sided ideal $I$ of $A$ we regard $I^\sim$  as a subalgebra of $A^\sim$, and, in this way, $\U_I$ as a (closed, normal)
subgroup of $\U_A$.

\begin{lemma}\label{unitarygenerators}
	Let $I$ be a closed two-sided ideal of $A$. Then the set $\{e^{ic}\mid c\in [I,A]\cap A_{\sa}\}$ generates $\overline{(\U_I,\U_A)}$ as a topological group.	
\end{lemma}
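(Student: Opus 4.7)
The plan is to mirror the two-step proof of Lemma \ref{idealnormal}, restricting the exponents to $[I,A]\cap A_{\sa}$ so that the resulting exponentials are unitary. Let $H$ denote the closed subgroup of $\U_A$ generated by $\{e^{ic}\mid c\in [I,A]\cap A_{\sa}\}$. I would prove the two inclusions $H\subseteq \overline{(\U_I,\U_A)}$ and $\overline{(\U_I,\U_A)}\subseteq H$ separately.

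For the first inclusion, I would first observe that $\overline{\Lin(\U_I)}=I^{\sim}$: differentiating $t\mapsto e^{ith}$ at $t=0$ shows $ih\in \overline{\Lin(\U_I)}$ for each $h\in I_{\sa}$. Using $[u,a]=u(au)u^{-1}-au$, it follows that $[I,A]$ lies in the closed complex span of $\{uau^{-1}-a\mid u\in \U_I,\,a\in A\}$; splitting $a$ into selfadjoint and skew parts one may further restrict to $a\in A_{\sa}$. For $c\in [I,A]\cap A_{\sa}$, symmetrizing an approximation of $c$ by such combinations with its adjoint (and using that $uau^{-1}-a\in A_{\sa}$ whenever $a\in A_{\sa}$ and $u$ is unitary) produces a real-linear approximation $\sum_j r_j (u_j a_j u_j^{-1}-a_j)$. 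Trotter's formula \eqref{trotter} then reduces the claim to $e^{i(uau^{-1}-a)}\in\overline{(\U_I,\U_A)}$, which follows from the identity $(ue^{ia/n}u^{-1}e^{-ia/n})^n=(u,e^{ia/n})^n$ and one more application of Trotter.

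For the reverse inclusion, $H$ is normal because its generating set is invariant under conjugation by $\U_A$. By \eqref{breakprod}, writing an arbitrary $u\in \U_I$ as a product of exponentials $e^{ih_k}$ with $h_k\in I_{\sa}$ small, it suffices to show $(e^{ih},v)\in H$ for $h\in I_{\sa}$ small and $v\in \U_A$. Since $(e^{ih},v)=e^{ih}e^{-ivhv^{-1}}$, \eqref{CBH1} yields $ic-i(h-vhv^{-1})\in \overline{[A,A]}$, where $ic:=\log((e^{ih},v))$; because $\overline{[A,A]}$ is a complex subspace, $c-(h-vhv^{-1})\in \overline{[A,A]}$. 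Moreover, $c\in I$ since $(e^{ih},v)\in 1+I$, and $h-vhv^{-1}=-[vh,v^{-1}]\in [I,A]$, so $c\in \overline{[A,A]}\cap I=\overline{[I,A]}$ by \eqref{mierseq}. Since $c$ is selfadjoint ($\log$ of a unitary is anti-selfadjoint) and $[I,A]$ is closed under the involution, $c\in \overline{[I,A]\cap A_{\sa}}$, whence $e^{ic}\in H$.

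The only delicate point is the selfadjointness bookkeeping: in both directions, elements arising a priori only in $[I,A]$ (or its closure) must be shown to lie in, or be approximable from, the selfadjoint part, so that their exponentials are genuinely unitary. This is handled throughout by exploiting that $[I,A]$ is $*$-closed, which lets us average any approximating sequence with its adjoint.
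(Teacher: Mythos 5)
Your proof is correct and follows essentially the same route as the paper's: the same two inclusions, with Trotter's formula and the identity $(u,e^{ia/n})^n\to e^{i(uau^*-a)}$ in one direction, and normality of $H$, \eqref{breakprod}, \eqref{CBH1} and Miers' equality \eqref{mierseq} in the other, with the same selfadjoint-part bookkeeping. The only (harmless) variation is that you obtain $[I,A]\subseteq\overline{\Lin}\{uau^*-a\}$ via the closed span of $\U_I$ and a differentiation argument, where the paper simply uses that $\U_I$ spans $I^\sim$ exactly.
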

\begin{proof}
Let  us first show that $e^{ic}\in \overline{(\U_I,\U_A)}$ for all $c\in [I,A]$ selfadjoint.
Let $c\in [I,A]$ be a selfadjoint.
Since  $\U_I$ spans $I^\sim$ linearly, $c\in [\U_I,A]$. Moreover,
from the formula  $[u,a]=u(au)u^*-au$
 we see that $c$ is  a sum of elements 
of the form $uau^*-a$, with $u\in \U_I$ and $a\in A$.  Taking the selfadjoint part of these elements we may assume that $a\in A_{\sa}$. As in the proof of Lemma \ref{idealnormal},  to prove $e^{ic}\in  \overline{(\U_I,\U_A)}$ we apply 
 Trotter's formula twice; first to reduce to the case  that $c=uau^*-a$ and then to see that $e^{i(uau^*-a)}$   is the limit of $(u,e^{i\frac a n})^n\in (\U_I,\U_A)$, as desired.

Let us now show that $\overline{(\U_I,\U_A)}$ is contained in the closed subgroup generated by the elements  $e^{ic}$ with $c\in [I,A]\cap A_{\sa}$. Call this group $H$. Observe that $H$ is normal  in $\U_A$ by the  invariance of its generating set under conjugation by $\U_A$. Let $u\in \U_I$ and $v\in \U_A$ and let us 
show that $(u,v)\in H$. Writing  $u$
	as a product of exponentials in $\U_I$ and using \eqref{breakprod}  we reduce ourselves to the case that $u=e^{ib}$ for some $b\in I_\sa$. We can also assume that
	$b$ is very small.  Let us choose $b$ small enough  so that  $(a,e^{ib})=e^{ic}$ for some $c\in A_{\sa}$.  As in the proof of Lemma \ref{idealnormal}, applying  \eqref{CBH1}  we find that $c\in \overline{[A,A]}\cap I=\overline{[I,A]}$.  Writing $c$ as a limit of elements
in $[I,A]$ and taking their selfadjoint parts if necessary we find that $e^{ic}$
belongs to the closure of $\{e^{id}\mid d\in [I,A]\cap A_{\sa}\}$, as desired.
\end{proof}	

\begin{lemma}\label{unitaryNI}	Let $I$ be a closed two-sided ideal of $A$. 
Let $u\in \U_A$. If $(u,\U_A)\subseteq I^\sim$ then $(u,\U_A)\subseteq \overline{(\U_I,\U_A)}$.
\end{lemma}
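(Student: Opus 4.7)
The plan is to adapt the proof of Lemma \ref{NI} verbatim, with selfadjoint logarithms replacing ordinary ones and Lemma \ref{unitarygenerators} replacing Lemma \ref{idealnormal}. Let $u \in \U_A$ satisfy $(u,\U_A)\subseteq I^\sim$, and let $v\in \U_A$. By \eqref{breakprod} (or rather its corollary noted just after it), it is enough to prove $(u,v)\in \overline{(\U_I,\U_A)}$ for $v$ ranging in a generating set of $\U_A$. So I would take $v=e^{ib}$ with $b\in A_{\sa}$ of norm small enough that $(u,e^{ib})$ lies in the neighbourhood of $1$ on which the principal logarithm takes unitaries to $i$ times selfadjoint elements. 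Set $c = -i\log\!\bigl((u,e^{ib})\bigr)\in A_{\sa}$, so that $(u,e^{ib}) = e^{ic}$. The goal becomes $e^{ic}\in \overline{(\U_I,\U_A)}$.

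The heart of the argument is to show $c\in \overline{[I,A]}$, by showing separately that $c\in I$ and $c\in \overline{[A,A]}$. For the first, the hypothesis $(u,\U_A)\subseteq I^\sim$ gives $(u,e^{ib}) = 1+x$ with $x\in I$; for $b$ small, $\|x\|<1$ and the norm-convergent series $\log(1+x)=\sum_{n\geq 1}(-1)^{n+1}x^n/n$ lies in $I$ (using that $I$ is a closed ideal), so $ic\in I$, hence $c\in I$. For the second, write $(u,e^{ib})=e^{i\,ubu^*}e^{-ib}$ and apply \eqref{CBH1} (valid for $b$ small) to conclude $ic - i(ubu^* - b) \in \overline{[A,A]}$. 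The identity $ubu^*-b = [u,bu^*]$, obtained from $uu^*=1$, puts $ubu^*-b$ in $[A,A]$, and hence $c\in \overline{[A,A]}$. Combining both and invoking \eqref{mierseq} yields $c \in I\cap \overline{[A,A]} = \overline{[I,A]}$.

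To finish, I would observe that $[I,A]$ is selfadjoint (since $[x,y]^* = -[x^*,y^*]$ and $I$ is a $*$-ideal), so taking selfadjoint parts of approximants produces a sequence $c_n\in [I,A]\cap A_{\sa}$ with $c_n\to c$ in norm. Lemma \ref{unitarygenerators} gives $e^{ic_n}\in \overline{(\U_I,\U_A)}$ for every $n$, and by continuity of the exponential $e^{ic}\in \overline{(\U_I,\U_A)}$, which is what we wanted.

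The only real obstacle is the bookkeeping around ``$b$ small enough'': we need simultaneously that $(u,e^{ib})$ is sufficiently close to $1$ for the principal logarithm to exist and land in $iA_{\sa}$, that the power series for $\log(1+x)$ converges, and that \eqref{CBH1} applies to $iubu^*$ and $-ib$. All three conditions hold once $\|b\|$ is smaller than an explicit constant depending only on $\|u\|=1$, so this can be arranged uniformly. Beyond that caveat the argument is a straightforward transcription of the proof of Lemma \ref{NI}.
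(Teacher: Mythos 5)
There is a genuine gap at the step where you claim $c\in I$. The hypothesis $(u,\U_A)\subseteq I^\sim$ only gives $(u,e^{ib})=\lambda 1+x$ with $\lambda\in\C$ and $x\in I$, because $I^\sim=\C 1+I$ inside $A^\sim$; it does not give $(u,e^{ib})=1+x$ with $x\in I$, which is what you assert and then feed into the $\log(1+x)$ power series. What has to be ruled out is that the image of $(u,e^{ib})$ in $A^\sim/I$ is a nontrivial scalar, and this cannot be taken for granted: multiplicative commutators of unitaries can perfectly well be scalars different from $1$ (in an irrational rotation algebra $(u,v)=e^{2\pi i\theta}1$), and the smallness of $b$ by itself only forces $\lambda$ to be close to $1$, not equal to it. Eliminating the scalar is precisely the content of the paper's argument (as in Lemma \ref{NI}): one passes to the limit in $((u,e^{itb})-1)/t\in I^\sim$ to get $ubu^*-b\in I^\sim$, notes that $ubu^*-b=[u,bu^*]$ is an \emph{additive} commutator, and invokes the classical fact that a commutator cannot be a nonzero scalar multiple of the identity to conclude $ubu^*-b\in I$, hence $[u,b]\in I$. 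Only then does one get $(u,e^{ib})=e^{iubu^*}e^{-ib}\equiv 1$ modulo $I$, so that the scalar part really is $1$ and your $\log(1+x)$ computation becomes legitimate.

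The remainder of your proof is fine and coincides with the paper's: the reduction to $v=e^{ib}$ via \eqref{breakprod}, the use of \eqref{CBH1} together with $ubu^*-b=[u,bu^*]\in[A,A]$ to get $c\in\overline{[A,A]}$, the identity \eqref{mierseq} to land in $\overline{[I,A]}$, and the passage to selfadjoint approximants in $[I,A]\cap A_{\sa}$ before applying Lemma \ref{unitarygenerators} and continuity of the exponential. So the repair is local: insert the limit-plus-Wielandt--Wintner step above before claiming $(u,e^{ib})-1\in I$.
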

\begin{proof}
	Let $v\in \U_A$ and let us show that
 $(u,v)\in \overline{(\U_I,\U_A)}$. 
	By \eqref{breakprod} it suffices to choose $v$ from a generating set of $\U_A$; we thus  assume  that $v=e^{ib}$ for some $b\in A_{\sa}$. The proof now proceeds exactly as in Lemma \ref{NI}:  Choosing $b$ small enough we have that $(u,e^{ib})=e^{ic}$ for some selfadjoint $c\in \overline{[A,A]}$.
	Passing to the limit as $t\to 0$ in  $((u,e^{itb})-1)/t\in I^\sim$  we get that 
	$ubu^*-b\in I^\sim$, whence 
	$[u,b]\in I^\sim$, and further $[u,b]\in I$ (since a commutator cannot be a non-zero scalar). Hence  $c\in I\cap \overline{[A,A]}=\overline{[I,A]}$ (by \eqref{mierseq}).  Thus, 
	$(u,e^{ib})=e^{ic}\in \overline{(\U_I,\U_A)}$
	by Lemma \ref{unitarygenerators}.
	\end{proof}

\begin{theorem}\label{unitarycommequal}
	Let $H$ be a normal subgroup of $\U_A$. 
Then $\overline{(H,\U_A)}=\overline{(\U_I,\U_A)}$, where $I=\Id([H,A])$.	
\end{theorem}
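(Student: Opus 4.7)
The plan is to mirror the proof of Theorem \ref{commequal} step by step, substituting unitaries for invertibles and using Lemmas \ref{unitarygenerators} and \ref{unitaryNI} in place of Lemmas \ref{idealnormal} and \ref{NI}. All exponentials $e^{a}$ become $e^{ih}$ with $h\in A_\sa$, and when manipulating elements of $[I,A]$ I carry along a selfadjointness condition.

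First, I would establish $(H,\U_A)\subseteq \overline{(\U_I,\U_A)}$. By the definition of $I$ the image of $H$ in $\U_{A/I}$ is central, so $(H,\U_A)\subseteq I^\sim$, and Lemma \ref{unitaryNI} applied to each $u\in H$ gives the inclusion.

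For the reverse inclusion, by Lemma \ref{unitarygenerators} it is enough to show that $e^{ic}\in \overline{(H,\U_A)}$ for every $c\in [I,A]\cap A_\sa$. The key intermediate step is to show that
\[
L := \overline{\Lin\{uau^{*}-a \mid u\in H,\ a\in A\}}
\]
equals $\overline{[I,A]}$. The identities $uau^{*}-a = [u,au^{*}]$ and $[u,a]=u(au)u^{*}-au$ give $L=\overline{[H,A]}$. Since $H$ is normal in $\U_A$, the closed subspace $\overline{\Lin(H)}$ is invariant under conjugation by every $e^{ith}$ with $h\in A_\sa$ and $t\in\R$; differentiating this invariance at $t=0$ shows $[h,\overline{\Lin(H)}]\subseteq \overline{\Lin(H)}$, and since $A=A_\sa+iA_\sa$, $\overline{\Lin(H)}$ is a Lie ideal of $A$. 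Applying \cite[Theorem 5.27]{BKS} exactly as in the proof of Theorem \ref{commequal} then yields $\overline{[H,A]}=\overline{[J,A]}$ with $J=\Id([\Lin(H),A])=\Id([H,A])=I$, which proves the claim.

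Finally, any selfadjoint $c\in[I,A]$ is now approximable by finite sums $\sum_i(u_i a_i u_i^{*}-a_i)$ with $u_i\in H$ and $a_i\in A$; taking selfadjoint parts lets me arrange $a_i\in A_\sa$ (since $(uau^{*}-a)^{*}=ua^{*}u^{*}-a^{*}$). Two applications of Trotter's formula \eqref{trotter} finish the proof: $e^{ic}$ is a limit of products of terms $e^{i(uau^{*}-a)}$, and each such term is in turn a limit of $\bigl(u e^{ia/n}u^{*}e^{-ia/n}\bigr)^{n}=(u,e^{ia/n})^{n}\in (H,\U_A)$. The only delicate point I anticipate is maintaining selfadjointness at every step so that the exponentials land in $\U_A$, which is precisely what the selfadjoint-parts trick handles.
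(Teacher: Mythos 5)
Your proposal is correct and follows essentially the same route as the paper: Lemma \ref{unitaryNI} for the forward inclusion, Lemma \ref{unitarygenerators} plus the claim $L=\overline{[H,A]}=\overline{[I,A]}$ via \cite[Theorem 5.27]{BKS} for the reverse, and two applications of Trotter's formula with the selfadjoint-part trick. The only cosmetic difference is that you verify the Lie-ideal property of $\overline{\Lin(H)}$ by differentiating conjugation by $e^{ith}$ at $t=0$, whereas the paper simply cites \cite[Theorem 2.3]{marcoux-murphy}; your argument is a valid substitute.
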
	

\begin{proof}
From the definition of $I$ we clearly have that
$\overline{(H,\U_A)}\subseteq I^\sim$. The inclusion $\overline{(H,\U_A)}\subseteq \overline{(\U_I,\U_A)}$ now follows from Lemma \ref{unitaryNI}. 
Let us prove that   $\overline{(\U_I,\U_A)}\subseteq \overline{(H,\U_A)}$. By Lemma \ref{unitarygenerators} it suffices to show that $e^{ic}\in \overline{(H,\U_A)}$ for all  selfadjoint $c\in \overline{[I,A]}$. Let us define
\[
L=\overline{\Lin(\{uau^*-a\mid u\in H,a\in A\})}.
\]	
We claim that $L=\overline{[I,A]}$. Before proving this, let us complete the proof of the theorem: Let $c\in \overline{[I,A]}$ be a selfadjoint. Since $c$ is in $L$  it can be approximated 
by a sum of elements of the form $uau^*-a$, with $u\in H$ and $a\in A$.
Taking the selfadjoint part of these terms if necessary, we may further assume that $a\in A_{\sa}$. By Trotter's formula, we reduce ourselves to the case that  $c=uau^*-a$, with $u\in H$ and $a\in A_{\sa}$.
But $e^{i(uau^*-a)}$, by Trotter's formula,  is the limit of $(u,e^{ia/n})^n\in (H,\U_A)$. Thus, $e^{ic}\in \overline{(H,\U_A)}$, as desired.

Let us  prove the claim that  $L=\overline{[I,A]}$. First notice that 
$L=\overline{[H,A]}$ because  of the formulas   $uau^*-a=[u,au^*]$ 
and   $[u,a]=u(au)u^*-au$. To prove that  $\overline{[H,A]}=\overline{[I,A]}$ we proceed as in the proof of Theorem \ref{commequal}: We first notice  that $\overline{\Lin(H)}$ is a Lie ideal since 
it is a closed subspace invariant   under conjugation by $\U_A$  (see \cite[Theorem 2.3]{marcoux-murphy}). By \cite[Theorem 5.27]{BKS} applied to $\overline{\Lin(H)}$ we have that
$\overline{[H,A]}=\overline{[J,A]}$,
where $J=\Id([\Lin(H),A])=\Id([H,A])=I$. This proves our claim.
\end{proof}	

\begin{corollary}
Let $A$ be a simple $\Cstar$-algebra. Let $H$ be a closed  normal subgroup of $\U_A$. Then either $H$ is contained in the center of $\U_A$ or $\overline{(\U_A,\U_A)}\subseteq H$.	
\end{corollary}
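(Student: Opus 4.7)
The plan is to imitate the proof of Corollary \ref{dichotomy} (the invertible version) essentially verbatim, using Theorem \ref{unitarycommequal} in place of Theorem \ref{commequal}. First I would dispose of the easy case: if $H$ is contained in the center of $\U_A$ there is nothing to prove.

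So assume $H$ is not central in $\U_A$. I would argue that this forces $[H,A]\neq 0$. Indeed, if some $u\in H$ failed to commute with some element of $\U_A$, then since $\U_A$ contains $e^{ih}$ for every $h\in A_{\sa}$, differentiating $t\mapsto u e^{ith}u^* e^{-ith}$ at $0$ would produce a nonzero element of $[u,A_{\sa}]$; combined with $A=A_{\sa}+iA_{\sa}$ this gives $[u,A]\neq 0$, hence $[H,A]\neq 0$. Therefore $I:=\Id([H,A])$ is a nonzero closed two-sided ideal of $A$, and by the simplicity of $A$ we conclude $I=A$.

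Now I would apply Theorem \ref{unitarycommequal} to $H$, obtaining
\[
\overline{(H,\U_A)}=\overline{(\U_I,\U_A)}=\overline{(\U_A,\U_A)}.
\]
Finally, since $H$ is normal in $\U_A$ we have $(H,\U_A)\subseteq H$, and $H$ is closed, so $\overline{(H,\U_A)}\subseteq H$. Combining the two displays yields $\overline{(\U_A,\U_A)}\subseteq H$, which is the desired conclusion.

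I do not anticipate any real obstacle: the whole machinery (Theorem \ref{unitarycommequal}, the ideal $\Id([H,A])$, and the self-adjoint analogue of the exponential-generation lemma) has already been built in the preceding pages. The only point where one has to be momentarily careful is the passage from ``non-central in $\U_A$'' to ``$[H,A]\neq 0$'', which is handled by the fact that the exponentials of self-adjoint elements both lie in $\U_A$ and linearly span $A$.
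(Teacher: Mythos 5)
Your proposal is correct and follows exactly the route the paper takes: the paper's proof is literally "argue as in Corollary \ref{dichotomy} using Theorem \ref{unitarycommequal} in place of Theorem \ref{commequal}," which is what you do, including the (correct) observation that non-centrality of $H$ in $\U_A$ forces $[H,A]\neq 0$ because $\U_A$ is generated by exponentials $e^{ih}$ with $h\in A_{\sa}$.
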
	
\begin{proof}
Argue as in the proof of Corollary \ref{dichotomy} using Theorem \ref{unitarycommequal} in place of Theorem \ref{commequal}.
\end{proof}

\begin{lemma}\label{harpeskand}
Let $x\in N_2$ be such that  $\|x\|<\sqrt{\frac \pi 2}$. Then $e^{i[x^*,x]}=(u,v)$ for some $u,v\in (\U_A,\U_A)$.	
\end{lemma}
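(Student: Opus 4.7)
The plan is to follow the template of Lemma \ref{N2comm}: by the universal property of the $\Cstar$-algebra generated by a square-zero element of norm at most $M = \|x\|$, it suffices to treat the case $A = M_2(C_0(0,M])$ with $x = \bigl(\begin{smallmatrix} 0 & t\\ 0 & 0\end{smallmatrix}\bigr)$. A direct computation gives
\[
e^{i[x^*,x]} = \begin{pmatrix} e^{-it^2} & 0\\ 0 & e^{it^2}\end{pmatrix},
\]
which is an $SU(2)$-valued continuous function of $t\in[0,M]$ equal to $I$ at $t=0$ and representing the rotation by angle $2t^2$ around $\hat z$. The hypothesis $M^2<\pi/2$ ensures the target stays strictly away from $-I$ throughout $[0,M]$.

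The heart of the proof is to exhibit this diagonal unitary as a single commutator $(u,v)$ of two elements $u,v\in(\U_A,\U_A)$. There is no unitary analogue of a unipotent upper-triangular matrix (unitary upper-triangulars are diagonal), so the Bruhat-style decomposition of Lemma \ref{N2comm} does not transfer directly. Instead I would take $u$ and $v$ to be $SU(2)$-rotations of the form $\exp\bigl(i\alpha(t)\,\hat n_1(t)\!\cdot\!\vec\sigma\bigr)$ and $\exp\bigl(i\beta(t)\,\hat n_2(t)\!\cdot\!\vec\sigma\bigr)$, with $\alpha,\beta$ vanishing at $t=0$ and the axes $\hat n_1,\hat n_2$ lying in the $xy$-plane. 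Expanding $(u,v)$ via the Pauli relations $\sigma_x\sigma_y=i\sigma_z$ etc., one derives a system of equations whose solution matches the target exactly: to leading order, orthogonal axes with $\alpha\beta = t^2/2$ give $[i\alpha\sigma_x,i\beta\sigma_y]=-it^2\sigma_z$, while the higher-order $\sigma_x,\sigma_y$ components of $(u,v)$ are cancelled by letting the axes depend suitably on $t$. This produces continuous $u,v$ that agree with $I$ at $t=0$, hence lie in $\U_A$.

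Finally, each of $u$ and $v$ must itself be realized as a commutator of $\U_A$-elements. This uses the perfectness of $SU(2)$: any $w\in SU(2)\setminus\{-I\}$ can be written $w=(w_1,w_2)$ with $w_1,w_2\in SU(2)$, and the decomposition may be chosen continuously along a path $t\mapsto w(t)$ with $w(0)=I$ provided the path avoids $-I$---which the paths $u(t),v(t)$ constructed above do. Applied pointwise, this yields continuous factorizations $u=(u_1,u_2)$, $v=(v_1,v_2)$ with $u_i(0)=v_i(0)=I$, so $u_i,v_i\in \U_A$ and consequently $u,v\in(\U_A,\U_A)$, as required.

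The main obstacle is the exact matching in the middle step: ensuring that $(u,v)$ is genuinely diagonal (axis $\hat z$), not merely has the correct trace, requires simultaneously fixing the axis of the resulting $SU(2)$ rotation, which leads to a coupled nonlinear system for $\alpha,\beta,\hat n_1,\hat n_2$. Solving it while preserving continuity and the boundary value $u(0)=v(0)=I$ is the technical heart of the argument, and plays a role analogous to the explicit square-root manipulations in Lemma \ref{N2comm}.
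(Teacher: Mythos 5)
Your reduction to the universal algebra $A=M_2(C_0(0,M])$ with $x=\bigl(\begin{smallmatrix} 0&t\\ 0&0\end{smallmatrix}\bigr)$, and the computation $e^{i[x^*,x]}=\mathrm{diag}(e^{-it^2},e^{it^2})$, are correct, and this is exactly how the paper's proof begins. But beyond that point the proposal does not contain a proof. After the reduction, the entire content of the lemma is the assertion that the path $t\mapsto \mathrm{diag}(e^{-it^2},e^{it^2})$ (which is forced to equal $I$ at $t=0$, since every element of $\U_A$ here is a $U(2)$-valued path equal to $I$ at $0$) is a single commutator of two such paths which are themselves products of commutators of such paths. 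You give only the leading-order ansatz ($[i\alpha\sigma_x,i\beta\sigma_y]=-2i\alpha\beta\sigma_z$, hence $\alpha\beta=t^2/2$) and then say the higher-order terms are to be cancelled by solving a coupled nonlinear system for the angles and axes --- a system you yourself label ``the technical heart'' and do not solve. Nothing in the proposal shows that this system has a solution that is continuous on $[0,M]$, equals the trivial data at $t=0$, and exists on the whole range allowed by the hypothesis. The final step has the same status: the claim that every $w\in SU(2)\setminus\{-I\}$ is a commutator $(w_1,w_2)$ with $w_1,w_2$ depending continuously on $w$ and equal to $I$ at $w=I$ is asserted, not proven; note that the standard decomposition $\mathrm{diag}(\lambda,\lambda^{-1})=(\mathrm{diag}(\mu,\mu^{-1}),W)$ with $W$ the Weyl element is unavailable here, because a constant non-scalar unitary does not lie in $\U_A$, so a genuinely basepoint-preserving construction is needed --- which is again the same kind of statement you left unproved in the middle step.

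A symptom that the quantitative core is missing is your account of the hypothesis $\|x\|<\sqrt{\pi/2}$: merely keeping the target away from $-I$ would only require $t^2<\pi$, i.e.\ $\|x\|<\sqrt{\pi}$; the sharper constant is what the actual construction needs. In the paper, after the identical reduction, the lemma is obtained by citing \cite[Lemma 5.13]{dlHarpe-Skandalis2}, which is precisely the basepointed $SU(2)$-path statement you are attempting to re-derive. The quickest way to close the gap is to invoke that lemma after your (correct) reduction; otherwise you must actually carry out and verify the explicit construction, which amounts to reproving it.
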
	
\begin{proof}
As in the proof of Lemma \ref{N2comm}, we may assume that $A=M_2(C_0(0,M])$  and that $x=\left(\begin{smallmatrix} 0 & t\\ 0 & 0\end{smallmatrix}\right)$, where $t$  is the identity function on $(0,M]$ and $M<\sqrt{\frac \pi 2}$. The lemma now follows from \cite[Lemma 5.13]{dlHarpe-Skandalis2}.
\end{proof}	

\begin{theorem}\label{unitaryperfect}
The set $\{e^{i[x^*,x]}\mid x\in N_2\}$ generates $\overline{(\U_A,\U_A)}$
as a topological group. The group $\overline{(\U_A,\U_A)}$ is topologically perfect.
\end{theorem}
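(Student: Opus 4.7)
The strategy mirrors the proof of Theorem \ref{U2}. Let $W$ denote the closed subgroup of $\U_A$ generated by $\{e^{i[x^{*},x]}\mid x\in N_2\}$. Since $N_2$ is stable under unitary conjugation and $v e^{i[x^{*},x]} v^{*}=e^{i[(vxv^{*})^{*},vxv^{*}]}$, the group $W$ is normal in $\U_A$. Observe also that, for $x\in N_2$ and $t\in\R$, one has $e^{it[x^{*},x]}\in W$: for $t\geq 0$ since $\sqrt{t}\,x\in N_2$ with $[(\sqrt{t}x)^{*},\sqrt{t}x]=t[x^{*},x]$, and for $t<0$ by inversion. So the full one-parameter family $\{e^{it[x^{*},x]}\mid t\in\R\}$ is contained in $W$ for each $x\in N_2$.

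The easy inclusion $W\subseteq\overline{(\U_A,\U_A)}$ follows from Lemma \ref{harpeskand}: for arbitrary $x\in N_2$, choose $n$ large enough that $\|x/\sqrt{n}\|<\sqrt{\pi/2}$, and note that
\[
e^{i[x^{*},x]}=\bigl(e^{i[(x/\sqrt{n})^{*},x/\sqrt{n}]}\bigr)^{n}
\]
exhibits $e^{i[x^{*},x]}$ as a product of $n$ commutators in $(\U_A,\U_A)$.

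For the reverse inclusion I apply Theorem \ref{unitarycommequal} to both $W$ and to $\U_A$, obtaining $\overline{(W,\U_A)}=\overline{(\U_I,\U_A)}$ with $I=\Id([W,A])$ and $\overline{(\U_A,\U_A)}=\overline{(\U_J,\U_A)}$ with $J=\Id([A,A])$. The inclusion $I\subseteq J$ is immediate from $[W,A]\subseteq[A,A]$. The main obstacle is $J\subseteq I$. Differentiating the curve $t\mapsto[e^{it[x^{*},x]},a]\in[W,A]$ at $t=0$ shows that $[[x^{*},x],a]\in\overline{[W,A]}\subseteq I$ for every $x\in N_2$ and $a\in A$. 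Set $K=\Id(\{[[x^{*},x],a]:x\in N_2,\,a\in A\})\subseteq I$; the crux is to prove $N_2\subseteq K$. Passing to $B=A/K$, each $[x^{*},x]$ is central, so $[x^{*}x-xx^{*},x]=0$ in $B$. Combined with $x^{2}=0$ this collapses to $-2xx^{*}x=0$, whence $(xx^{*})^{2}=0$ and then $xx^{*}=0$, forcing $x=0$ in $B$. Hence $N_2\subseteq K\subseteq I$, and by \cite[Corollary 2.3]{lie} we conclude $J=\Id([A,A])=\Id(N_2)\subseteq I$.

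With $I=J$ in hand, $\overline{(W,\U_A)}=\overline{(\U_A,\U_A)}$, and since $W$ is normal we have $\overline{(\U_A,\U_A)}\subseteq W$; combined with the first inclusion this gives $W=\overline{(\U_A,\U_A)}$. Topological perfectness follows at once from the same scaling argument: Lemma \ref{harpeskand} together with the identity $e^{i[x^{*},x]}=(e^{i[x^{*},x]/n})^{n}$ shows that each generator $e^{i[x^{*},x]}$ of $W$ lies in the subgroup $((\U_A,\U_A),(\U_A,\U_A))$, whence $\overline{(\U_A,\U_A)}=W\subseteq\overline{((\U_A,\U_A),(\U_A,\U_A))}\subseteq\overline{(\U_A,\U_A)}$.
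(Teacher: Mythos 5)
Your proof is correct, and its overall skeleton coincides with the paper's: form the closed subgroup $W$ generated by $\{e^{i[x^*,x]}\mid x\in N_2\}$, note it is normal, apply Theorem \ref{unitarycommequal} to $W$ and to $\U_A$, show the two associated ideals coincide, and close the loop with Lemma \ref{harpeskand} (your explicit rescaling $x\mapsto x/\sqrt{n}$ to meet the norm hypothesis of that lemma is a detail the paper leaves implicit). Where you genuinely diverge is in the key step $\Id([A,A])\subseteq \Id([W,A])$. The paper passes to $\widetilde A=A/\Id([W,A])$, lifts square-zero elements of $\widetilde A$ back to $A$, invokes \cite[Lemma 5.1]{tracedim} to deduce that \emph{all} square-zero elements of $\widetilde A$ are central, and then argues that $\widetilde A$ has no nonzero square-zero elements and hence is commutative. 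You instead stay inside $A$: differentiating $t\mapsto (e^{it[x^*,x]},a)$-type commutators puts $[[x^*,x],a]$ into the ideal, and in the quotient by the ideal $K$ these elements generate, the identity $[x^*x-xx^*,x]=-2xx^*x$ (valid when $x^2=0$) forces $x=0$, so $N_2\subseteq K$; the conclusion then follows from $\overline{\Lin(N_2)}=\overline{[A,A]}$ (\cite[Corollary 2.3]{lie}). Your route avoids both the square-zero lifting argument and the appeal to \cite[Lemma 5.1]{tracedim}, at the cost of invoking \cite[Corollary 2.3]{lie}, which the paper uses elsewhere anyway; it is a slightly more elementary and self-contained treatment of that step, while the paper's version yields the structural byproduct that the quotient $A/\Id([V_2,A])$ is commutative.
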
	
\begin{proof}
		Let $V_2$ denote the closed subgroup of $\U_A$ generated by  $\{e^{i[x^*,x]}\mid x\in N_2\}$. By the unitary conjugation invariance of its set of generators, $V_2$  is normal. Theorem \ref{unitarycommequal} applied to $V_2$ yields that $\overline{(V_2,\U_A)}=\overline{(\U_I,\U_A)}$, where  $I=\Id([V_2,A])$.  Let us show that $I=\Id([A,A])$.  The inclusion
$I\subseteq \Id([A,A])$ is obvious from the definition of $I$. Let $\widetilde A$ denote the quotient of $A$ by $I$.
Let $x\in \widetilde A$ be a square zero element.  We can lift $x$ to a square zero element in $A$. From this we deduce that   $e^{it[x^*,x]}$ is in the center of $\widetilde A$ for all $t\in\R$.  This,  in turn, implies that  $[x^*,x]=\lim_{t\to 0}(e^{it[x^*,x]}-1)/t$ is in the center. By \cite[Lemma 5.1]{tracedim},   the selfadjoint  and skewadjoint parts of a square zero element are both of the form $[y^*,y]$, where $y$ is a square zero element. Thus,  all the square zero elements  of $\widetilde A$ are in its center.
But a commutative  $\Cstar$-algebra  contains no nonzero nilpotents. Thus, $\widetilde A$ has no nonzero square zero elements, which in turn implies that it is  commutative. Hence $\Id([A,A])\subseteq I$.  We have thus shown that
$\overline{(V_2,\U_A)}=\overline{(\U_{\Id([A,A])},\U_A)}$. On the other hand,
Theorem \ref{unitarycommequal} applied to $\U_A$ implies that $\overline{(\U_A,\U_A)}=\overline{(\U_{\Id([A,A])},\U_A)}$.  So
\[
\overline{(\U_A,\U_A)}=\overline{(V_2,\U_A)}\subseteq V_2 \subseteq \overline{((\U_A,\U_A),(\U_A,\U_A))},
\]
where we have used  Lemma \ref{harpeskand} in the righmost inclusion. These inclusions must be equalities, which proves the two claims of the theorem.
\end{proof}	

\begin{theorem}
The closed subnormal subgroups of $\U_A$ are normal.
\end{theorem}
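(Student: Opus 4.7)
The plan is to mirror the proof of Theorem \ref{subnormal}, replacing $\G_A$, $\G_I$, Lemma \ref{NI}, Lemma \ref{idealnormal}, and Theorem \ref{commequal} by their unitary analogues $\U_A$, $\U_I$, Lemma \ref{unitaryNI}, Lemma \ref{unitarygenerators}, and Theorem \ref{unitarycommequal}. By taking closures of all subgroups in the subnormal chain, one reduces to the following assertion: if $H\subseteq G\subseteq \U_A$ are closed subgroups with $H$ normal in $G$ and $G$ normal in $\U_A$, then $H$ is normal in $\U_A$. Setting $I=\Id([H,A])$, Lemma \ref{unitaryNI} gives $(H,\U_A)\subseteq \overline{(\U_I,\U_A)}$, while Theorem \ref{unitarycommequal} applied to $G$ yields $\overline{(G,\U_A)}=\overline{(\U_J,\U_A)}$ with $J=\Id([G,A])\supseteq I$, so $\overline{(\U_I,\U_A)}\subseteq \overline{(G,\U_A)}\subseteq G$. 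Since $H$ is normal in $G$, it follows that $H$ is invariant under conjugation by $\overline{(\U_I,\U_A)}$. To conclude that $H$ is normal in $\U_A$, it remains to prove the containment $\overline{(\U_I,\U_A)}\subseteq H$.

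By Lemma \ref{unitarygenerators}, it suffices to show $e^{ic}\in H$ for every selfadjoint $c\in [I,A]$. To this end I would introduce
\[
L = \overline{\Lin\{uau^* - a \mid u\in H,\ a\in [I,A]\}}
\]
and claim that $L=\overline{[I,A]}$. Granted the claim, any selfadjoint $c\in [I,A]$ can be approximated by sums of the form $\sum_k (u_k a_k u_k^* - a_k)$ with $u_k\in H$; taking selfadjoint parts reduces us to $a_k\in A_{\sa}$, and two applications of Trotter's formula further reduce the problem to showing $(u,e^{ia/n})\in H$ for $u\in H$ and $a\in [I,A]\cap A_{\sa}$. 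This holds because $e^{ia/n}\in \overline{(\U_I,\U_A)}$ by Lemma \ref{unitarygenerators} and $H$ is invariant under conjugation by that subgroup.

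The main obstacle is the claim $L=\overline{[I,A]}$, for which I would follow the Lie-ideal strategy of Theorem \ref{subnormal}. Since both $H$ and $[I,A]$ are invariant under conjugation by $\overline{(\U_I,\U_A)}$, so is $L$. Differentiating the curves $t\mapsto e^{ita}l e^{-ita}$ at $t=0$ for $a\in [I,A]\cap A_{\sa}$ and $l\in L$ gives $i[a,l]\in L$; since $L$ is a complex subspace, complex linearity yields $[L,[I,A]]\subseteq L$. An approximately central approximate unit for $I$, together with $L\subseteq I$, upgrades this to $[L,[A,A]]\subseteq L$, so $L$ is a Lie ideal by \cite[Theorem 1.15]{lie}. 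Since $L\subseteq \overline{[A,A]}$ is clear, \cite[Lemma 1.6]{lie} reduces the claim to showing $\Id(L)=\Id([L,A])=I$. Exactly as in the proof of Theorem \ref{subnormal}, passing to the quotient $\widetilde A=A/\Id([L,A])$ and applying Herstein's \cite[Theorem 1]{herstein} twice (first to the Lie ideal $[\widetilde I,\widetilde A]$, then to $\widetilde A$ itself) forces the image of $H$ to be central in $\widetilde A$; this yields $[H,A]\subseteq \Id([L,A])$ and hence $I\subseteq \Id([L,A])$. The reverse inclusion is immediate from $L\subseteq I$.
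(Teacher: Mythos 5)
Your proposal is correct and follows essentially the same route as the paper: the same reduction to a chain $H\subseteq G\subseteq \U_A$, the same use of Lemma \ref{unitaryNI} and Theorem \ref{unitarycommequal} to get conditions (1) and (2), the same subspace $L$ with the claim $L=\overline{[I,A]}$ proved via conjugation-invariance, the approximate unit, \cite[Theorem 1.15]{lie}, \cite[Lemma 1.6]{lie}, and Herstein's theorem, exactly as the paper does by referring back to the last paragraph of the proof of Theorem \ref{subnormal}. The only (immaterial) difference is bookkeeping, e.g.\ Herstein's theorem is invoked three times rather than twice in that final quotient argument.
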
	
\begin{proof}
	The proof runs along the same lines as the proof of Theorem \ref{subnormal}, with minor modifications. As in that proof,
our purpose is to show that if $H\subseteq G\subseteq \U_A$ are closed subgroups such that $H$ is normal in $G$
and $G$ is normal in $\U_A$ then $H$ is normal in $\U_A$.
We set $I=\Id([H,A])$
	and observe that  $(H,\U_A)\subseteq I^\sim$. By Lemma \ref{unitaryNI}, we get that
\begin{enumerate}
\item[(1)]
$(H,\U_A)\subseteq \overline{(\U_I,\U_A)}$. 
\end{enumerate}
Also, 
\begin{enumerate}
\item[(2)]
$H$ is invariant under conjugation
	by $\overline{(\U_I,\U_A)}$,
\end{enumerate}
because it is invariant under conjugation by $G$ which contains
	$\overline{(\U_J,\U_A)}\supseteq \overline{(\U_I,\U_A)}$  (where $J=\Id([G,A])$).
Next we use (1) and (2) to deduce that $H$ is normal. By (1),	it  suffices to show that
	$(\U_I,\U_A)\subseteq H$. To do this, it suffices to show that $e^{ic}\in H$  for all selfadjoint $c\in [I,A]$ (by Lemma \ref{unitarygenerators}). Let us define
	\[
	L=\overline{\Lin(\{uau^*-a\mid u\in H,a\in [I,A]\})}.
	\] 
We claim that  $L=\overline{[I,A]}$. Assume that this claim is true. Let $c\in [I,A]\cap A_{\sa}$. 
Since  $c\in L$, it  is a limit of   sums of elements of the form $uau^*-u$, with $a\in [I,A]$. Taking the  selfadjoint part of these sums,  we can assume  $a\in [I,A]\cap A_{\sa}$ (since the selfadjoint part of an element in $[I,A]$ is again in $[I,A]$). We can now assume that $c$ is equal to one of these sums. Moreover, by Trotter's formula applied twice  
	we are reduced to showing that $(u,e^{ia})$ belongs to $H$, for all $u\in H$ and selfadjoint $a\in [I,A]$. This holds since $e^{ia}\in \overline{(\U_I,\U_A)}$, by Lemma \ref{unitarygenerators}, and $H$ is invariant under $\overline{(\U_I,\U_A)}$.

Finally,  let us  prove the claim that  $L=\overline{[I,A]}$. From the invariance of $L$ under conjugation by $e^{ic}$, with $c\in [I,A]\cap A_{\sa}$ we deduce that $[L,c]\subseteq L$
for all  $c\in [I,A]\cap A_{\sa}$. But $[I,A]\cap A_{\sa}$ spans $[I,A]$. Hence,
$[L,[I,A]]\subseteq L$.  We continue arguing as  in the last paragraph of the proof of Theorem \ref{subnormal} to conclude that $L=\overline{[I,A]}$. 
\end{proof}	

\begin{corollary}
	Let $A$ be a simple $\Cstar$-algebra. Then $\overline{(\U_A,\U_A)}$
	divided by its center is a topologically simple group.
\end{corollary}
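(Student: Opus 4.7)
My plan is to mirror verbatim the argument used for the invertible case in Corollary \ref{essentiallysimple}, with every ingredient replaced by its unitary analogue developed earlier in this section. Let $H\subseteq \overline{(\U_A,\U_A)}$ be a closed normal subgroup that properly contains the center of $\overline{(\U_A,\U_A)}$; the goal will be to conclude that $H=\overline{(\U_A,\U_A)}$.

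First I would observe that, since $H$ is normal in $\overline{(\U_A,\U_A)}$ and $\overline{(\U_A,\U_A)}$ is normal in $\U_A$, $H$ is a closed subnormal subgroup of $\U_A$ via the chain $H\trianglelefteq \overline{(\U_A,\U_A)}\trianglelefteq \U_A$. Invoking the preceding theorem (closed subnormal subgroups of $\U_A$ are normal), I can upgrade this to: $H$ is normal in $\U_A$.

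Next I would apply the dichotomy corollary proved just after Theorem \ref{unitarycommequal}: since $A$ is simple, every closed normal subgroup of $\U_A$ either lies in $Z(\U_A)$ or contains $\overline{(\U_A,\U_A)}$. The first alternative is ruled out by our hypothesis on $H$, because $Z(\U_A)\cap \overline{(\U_A,\U_A)}\subseteq Z(\overline{(\U_A,\U_A)})$, so $H\subseteq Z(\U_A)$ would force $H$ into the center of $\overline{(\U_A,\U_A)}$. Hence the second alternative must hold, giving $H=\overline{(\U_A,\U_A)}$.

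I do not expect a substantive obstacle here: the two real pieces of work (the subnormal-implies-normal theorem for $\U_A$, and the central-or-contains-commutator dichotomy for simple $A$) are already in place in the preceding pages. The only mildly subtle observation is the inclusion $Z(\U_A)\cap \overline{(\U_A,\U_A)}\subseteq Z(\overline{(\U_A,\U_A)})$ used to eliminate the central alternative, and this is immediate from the definition of the center.
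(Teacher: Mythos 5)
Your argument is correct and is exactly the paper's proof: the paper simply says ``argue as in Corollary \ref{essentiallysimple}'', i.e.\ use the unitary subnormal-implies-normal theorem to make $H$ normal in $\U_A$ and then the unitary dichotomy for simple $A$, which is precisely your route. The final observation that $Z(\U_A)\cap\overline{(\U_A,\U_A)}\subseteq Z(\overline{(\U_A,\U_A)})$ is the same (implicit) step the paper uses to rule out the central alternative.
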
	
\begin{proof}
We can argue as in  Corollary \ref{essentiallysimple}.
\end{proof}

Let  $\Aut(A)$ denote the group of automorphisms  of $A$. We regard it  endowed with the topology  of pointwise convergence in norm.
Let $u\stackrel{\Ad}{\longmapsto} \Ad_u$ denote  the map associating to each unitary $u$ the inner automorphism
$\Ad_u(a): =uau^*$, for all $a\in A$. Let $\V_A$ denote the closure of the image of $\U_A$ under $\Ad$. That is,
\[
\V_A=\{\phi\in \Aut(A)\mid \Ad_{u_\lambda}\to \phi,
\hbox{ $u_\lambda\in \U_A$ for all $\lambda$}\}.
\]
For each closed two-sided ideal $I$ of $A$, let $\V_I$ denote the closure of the image of $\U_I$ under $\Ad$. 
That is,
\[
\V_I=\{\phi\in \Aut(A)\mid \Ad_{u_\lambda}\to \phi,
\hbox{ $u_\lambda\in \U_I$ for all $\lambda$}\}.
\]
Observe that $\V_I$  is a closed normal subgroup of $\V_A$. We stress that  the elements of $\V_I$ are automorphisms of $A$ rather than $I$. 

\begin{lemma}\label{multKR}
	Let $I$ be a closed two-sided ideal of $A$ and $B\subseteq A$ a separable $\Cstar$-subalgebra.
\begin{enumerate}[(i)]
	\item
	For each  $u\in \U_I$  there exist $v_1,v_2,\ldots\in (\U_I,\U_A)$ such that $[uv_n^*,b]\to 0$ for all $b\in B$. 
	\item
	The image of $ (\U_I,\U_A)$ under the map $\U(A)\stackrel{\Ad}{\longrightarrow} \Aut(A)$ is dense in $\V_I$.
\end{enumerate}	
\end{lemma}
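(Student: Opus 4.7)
The plan is as follows. Part (ii) will be a direct consequence of (i): given $\phi\in\V_I$, a finite $F\subseteq A$, and $\epsilon>0$, pick $u\in\U_I$ with $\|\Ad_u(a)-\phi(a)\|<\epsilon/3$ for $a\in F$, let $B$ be the separable $\Cstar$-subalgebra generated by $F$, and apply (i) to produce $v\in(\U_I,\U_A)$ with $\|[uv^*,a]\|$ small enough that $\|\Ad_v(a)-\Ad_u(a)\|\leq 2\|[uv^*,a]\|\|a\|<\epsilon/3$ for $a\in F$; the triangle inequality gives $\|\Ad_v(a)-\phi(a)\|<\epsilon$.

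For (i), I would first use \eqref{breakprod} and the fact that $\U_I$ is generated by $\{e^{ih}:h\in I_\sa\}$ to reduce, by induction on the number of exponential factors in $u$, to the case $u=e^{ih}$ with $h\in I_\sa$ of small norm. It then suffices, for each finite $F\subseteq B_\sa$ and $\epsilon>0$, to produce a single $v\in(\U_I,\U_A)$ with $\|[uv^*,b]\|<\epsilon$ for $b\in F$.

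The construction I have in mind goes by building $h''\in I_\sa$ with (a) $\|[h'',b]\|<\epsilon$ for $b\in F$ and (b) $h-h''\in\overline{[I,A]}$ to within norm $\epsilon$. Setting $w=e^{ih''}$, a Campbell--Baker--Hausdorff analysis of $\log(uw^{-1})$ shows it is within $O(\epsilon)$ of an element of $i\overline{[I,A]}_\sa$ (the higher-order BCH terms all lie in $\overline{[I,A]}$ for $h,h''$ small), so by Lemma~\ref{unitarygenerators} the element $uw^{-1}$ lies within $O(\epsilon)$ of $\overline{(\U_I,\U_A)}$; a norm approximation of $uw^{-1}$ by products of algebraic commutators gives $v\in(\U_I,\U_A)$ with $uv^*\approx w$, hence approximately commuting with $F$. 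A first attempt at $h''$ is the quasicentral compression $h''_n=(1-e_n)^{1/2}h(1-e_n)^{1/2}$ for a quasicentral approximate unit $(e_n)$ of $I$ relative to the separable $\Cstar$-subalgebra generated by $B$ and $h$ (available by Arveson's theorem); a direct computation using $(1-e_n)\cdot i\to 0$ for $i\in I$ and the quasicentrality yields $\|[h''_n,b]\|\to 0$ for $b\in B$, so (a) holds, and with $f_n=(1-e_n)^{1/2}$ one obtains the identity $h-h''_n=e_nh+[f_n,f_nh]$ whose second summand lies in $[I,A]$.

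The main obstacle is the residual $e_nh$ in the decomposition of $h-h''_n$: it lies in $I$ but need not lie in $\overline{[I,A]}$, as it captures the ``trace part'' of $h$. I would overcome this by interleaving the compression with an averaging step, which exploits that for any $b\in B_\sa$ and $t\in\R$,
\[
h-\Ad_{e^{itb}}(h)=-[e^{itb},he^{-itb}]\in[I,A].
\]
Thus averaging $h$ over $t$ in an appropriate interval produces an element in $h+\overline{[I,A]}$ that is approximately in the commutant of $b$. Iterating this averaging across a countable dense subset of $B$ with appropriately diminishing step sizes so as to preserve previously achieved approximate commutativities, combined with the quasicentral compression, gives, by a standard diagonal argument using separability of $B$, the desired $h''$ satisfying (a) and (b). The final passage from $\overline{(\U_I,\U_A)}$ to $(\U_I,\U_A)$ itself is routine given the norm approximation.
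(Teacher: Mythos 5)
Your overall architecture matches the paper's: reduce to $u=e^{ih}$ with $h\in I_{\sa}$, replace $h$ by an element $h''$ that approximately commutes with $B$ and satisfies $h-h''\in\overline{[I,A]}$ (approximately), and then use Campbell--Baker--Hausdorff together with Lemma \ref{unitarygenerators} to conclude that $ue^{-ih''}$ is close to $\overline{(\U_I,\U_A)}$; part (ii) then follows from (i) as you say (modulo a small slip: $\|\Ad_u(a)-\Ad_v(a)\|=\|[v^*u,a]\|=\|[uv^*,uau^*]\|$, not $2\|[uv^*,a]\|\,\|a\|$, so one should apply (i) with $B$ enlarged to contain $\Ad_u(F)$ --- harmless). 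The paper obtains exactly the required $h''$ from Kirchberg--R{\o}rdam \cite[Lemma 6.4]{kirchberg-rordam}, working in the sequence algebra $A_\infty$: it gives $d$, a limit of averages $\frac1N\sum_j(w_jcw_j^*-c)\in[I,A]$, with $c-d$ commuting with $B$; general products $e^{ic_1}\cdots e^{ic_m}$ are then handled by a telescoping identity rather than by your induction (which would also work if the separable algebra is enlarged by the relevant conjugates at each step).

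The genuine gap is in your construction of $h''$. The quasicentral compression contributes nothing: since $h\in I$ and $(e_n)$ is an approximate unit for $I$, one has $(1-e_n)^{1/2}h(1-e_n)^{1/2}\to 0$, so the ``residual'' $e_nh$ tends to $h$ itself --- you notice this, and your fix is the iterated averaging. But the averaging step does not iterate as claimed. To make $\bigl[\tfrac1T\int_0^Te^{itb}he^{-itb}\,dt,\;b\bigr]$ small one must integrate over a \emph{long} interval, $T\gtrsim\|h\|/\epsilon$ (the commutator equals $\frac1{iT}(e^{iTb}he^{-iTb}-h)$); there is no ``diminishing step size'' version, since short intervals give no commutation with the new element. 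And a subsequent long-time average with respect to $b'$ conjugates by $e^{itb'}$ with $t$ large, for which $\|[b,e^{itb'}xe^{-itb'}]\|\leq\|[b,x]\|+2\|x\|\,\|b-e^{-itb'}be^{itb'}\|$ and the last term is in no way small; so the approximate commutation with $b$ already achieved can be completely destroyed. Passing from one constraint to a whole separable $\Cstar$-subalgebra, while staying inside $h+\overline{[I,A]}$ (i.e.\ using only convex combinations of unitary conjugates), is precisely the nontrivial content of \cite[Lemma 6.4]{kirchberg-rordam}, and your sketch neither reproves it nor offers a working substitute; a diagonal argument over a dense sequence cannot start until the finite-set case is settled. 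As written, the proof of (i) therefore has a hole at its central step; citing (or genuinely reproving) the Kirchberg--R{\o}rdam lemma is what is needed to close it.
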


\begin{proof}
 We work with the sequence algebra $A_\infty=\prod_{n=1}^\infty A/\bigoplus_{n=1}^\infty A$ to simplify some calculations.  We regard $A$ as a subalgebra 
 of $A_\infty$ via the diagonal embedding $a\mapsto [{(a)_{n=1}^\infty}]$ (the brackets denote the equivalence class of a sequence).

Let us first assume that $u=e^{ic}$ for some $c\in I_{\sa}$. Without loss of generality, assume that $B$ contains $c$. 
	By \cite[Lemma 6.4]{kirchberg-rordam},
	there exists $d=[(d_n)_{n=1}^\infty]\in A_\infty$ such that $c-d$  commutes with $B$ and
each $d_n$ has the form $\frac 1 N\sum_{j=1}^N (w_jcw_j^*-c)$ for some unitaries $w_1,\dots,w_N\in \U_A$. Observe from this that 
	$d_n$ is selfadjoint  and that $d_n\in [I,A]$ for all $n$.
	Set $v_n=e^{id_n}$ for $n=1,\dots$.
	These unitaries have the desired property. Indeed, on one hand $e^{id_n}\in \overline{(\U_I,\U_A)}$ for all $n$  by Lemma \ref{unitarygenerators}. Also, working in $A_\infty$, we have that 
	$e^{ic}e^{-id}=e^{i(c-d)}$, since $d$ commutes with $c$, and that $e^{i(c-d)}$  commutes with $B$, since $c-d$ commutes with $B$. 
	Thus, $e^{ic}e^{-id}$ commutes with $B$, as desired.

	Suppose now that  $u=e^{ic_1}e^{ic_2}\cdots e^{ic_m}$, with 
	$c_1,\dots,c_m\in I_{\sa}$. 
	Assume without loss of generality that $B$ contains $c_1,\dots,c_m$. For each $c_k$
	choose  selfadjoints $d_k=[(d_{k,n})_{n=1}^\infty]\in A_\infty$ such that $c_k-d_k$ commutes with $B$ and $d_{k,n}\in [I,A]$ for all $k=1,\dots,m$ and $n\in \N$. Set $v_n=e^{id_{1,n}}\cdots e^{id_{m,n}}$  for all $n\in \N$.  These unitaries have the desired property. Indeed, 
	working in $A_\infty$, we have that 
	\begin{align*}
	(e^{ic_1}\cdots e^{ic_m})(e^{-id_m}\cdots e^{-id_1}) &=e^{i(c_m-d_m)}e^{ic_1}\cdots e^{ic_{m-1}}e^{-id_{m-1}}\cdots e^{-id_1}\\
	&=e^{i(c_m-d_m)}\cdots e^{i(c_1-d_1)}.
	\end{align*}
	Since $c_k-d_k$ commutes with $B$ for all $k$, the right side commutes with $B$, as desired.
	
	(ii) Since the automorphisms  $\Ad_u$, with $u\in \U_I$, are dense in $\V_I$, it suffices to approximate them by automorphisms $\Ad_v$ with $v\in (\U_I,\U_A)$.
Let $u\in \U_I$. Let $F\subseteq A$ be a finite set and $\epsilon>0$. By (i), there exists $v\in (\U_I,\U_A)$ such that 
	$\|[uv^*,x]\|<\epsilon$ for all $x\in F$. It follows that  $\|\Ad_u(x)-\Ad_u(x)\|<\epsilon$ for all $x\in F$, as desired. 
	\end{proof}

		The following theorem may be regarded as
a description of the closed normal subgroups of $\V_A$ in much the same way that
Theorems \ref{commequal} and \ref{unitarycommequal} are for $\G_A$ and $\U_A$.

\begin{theorem}\label{autocommequal}
Let $G$ be a closed normal subgroup of $\V_A$.  Then  $\overline{(G,\V_A)}=\V_I$, where $I=\Id(\{\phi(x)-x\mid x\in A,\,\phi\in G\})$.
\end{theorem}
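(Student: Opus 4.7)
The plan is to prove the two inclusions separately.

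For $\overline{(G,\V_A)} \subseteq \V_I$, I will use the identity
\[
(\phi, \Ad_v) = \Ad_{\phi(v)\,v^{-1}}
\]
(valid for any $\phi \in \V_A$ and $v \in \U_A$), which follows from $\phi\,\Ad_v\,\phi^{-1} = \Ad_{\phi(v)}$. Whenever $\phi \in G$, the definition of $I$ guarantees $\phi(x) - x \in I$ for all $x \in A$, so $\phi$ descends to the identity on $A/I$ and $\pi(\phi(v)\,v^{-1}) = 1$ in $(A/I)^\sim$; that is, $\phi(v)\,v^{-1}$ is a unitary in $1 + I$. Taking a continuous path $v(t)$ from $1$ to $v$ inside $\U_A$, the curve $\phi(v(t))\,v(t)^{-1}$ stays in $\U(I^\sim)$ (same computation) and joins $1$ to $\phi(v)\,v^{-1}$, so $\phi(v)\,v^{-1}$ actually lies in the connected component $\U_I$. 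Hence $(\phi,\Ad_v) \in \Ad(\U_I) \subseteq \V_I$ for every $\phi \in G$ and $v \in \U_A$, and by density of $\Ad(\U_A)$ in $\V_A$ together with continuity of the commutator this extends to $(\phi,\psi) \in \V_I$ for every $\psi \in \V_A$. Passing to the closed subgroup generated by these commutators yields $\overline{(G,\V_A)} \subseteq \V_I$.

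For $\V_I \subseteq \overline{(G,\V_A)}$, I follow the Lie-ideal strategy of Theorems \ref{commequal} and \ref{unitarycommequal}. Set
\[
L = \overline{\Lin\{\phi(x) - x : \phi \in G,\, x \in A\}}.
\]
Since each $\phi \in G$ is a limit of inner automorphisms $\Ad_{u_\lambda}$, each $\phi(x) - x$ is a norm-limit of commutators $[u_\lambda x,\,u_\lambda^*]$, so $L \subseteq \overline{[A,A]}$. The normality of $G$ in $\V_A$ gives $\psi(L) \subseteq L$ for every $\psi \in \V_A$, so $L$ is stable under unitary conjugation and is therefore a closed Lie ideal by \cite[Theorem 2.3]{marcoux-murphy}; clearly $\Id(L) = I$. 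The crux is the identity $\Id([L,A]) = I$. In $\widetilde A = A/\Id([L,A])$ the image $\widetilde L$ is central, and each $\phi \in G$ (which preserves $\Id([L,A])$) descends to $\widetilde\phi \in \V_{\widetilde A}$ satisfying $\widetilde\phi(x) - x \in Z(\widetilde A)$. Because every element of $\V_{\widetilde A}$ is a limit of inner automorphisms and therefore fixes $Z(\widetilde A)$ pointwise, iterating yields $\widetilde\phi^n(x) = x + n\bigl(\widetilde\phi(x) - x\bigr)$; since $\widetilde\phi^n$ is an isometry this forces $\widetilde\phi(x) = x$. Thus $\widetilde L = 0$ and $\Id([L,A]) = I$; \cite[Lemma 1.6]{lie} then gives $L = \overline{[I,A]}$. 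Consequently any self-adjoint $c \in [I,A]$ is approximable by sums $\sum_j(\phi_j(x_j) - x_j)$ with $x_j \in A_{\sa}$ (after taking self-adjoint parts, using $\phi(x)^* = \phi(x^*)$). From
\[
(\phi,\Ad_{e^{ix/n}}) = \Ad_{e^{i\phi(x)/n}\,e^{-ix/n}}
\]
and Trotter's formula $(e^{i\phi(x)/n}e^{-ix/n})^n \to e^{i(\phi(x)-x)}$, one deduces $(\phi,\Ad_{e^{ix/n}})^n \to \Ad_{e^{i(\phi(x)-x)}}$, placing the latter in $\overline{(G,\V_A)}$; a further application of Trotter then gives $\Ad_{e^{ic}} \in \overline{(G,\V_A)}$ for every self-adjoint $c \in [I,A]$. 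Combining Lemmas \ref{unitarygenerators} and \ref{multKR}(ii), these $\Ad_{e^{ic}}$ topologically generate $\V_I$, completing the inclusion.

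The main obstacle will be the isometry argument establishing $\Id([L,A]) = I$. Its validity hinges on the fact that every element of $\V_{\widetilde A}$ fixes the center $Z(\widetilde A)$ pointwise, which is precisely where the structure of $\V_A$ as the closure of inner automorphisms is essential; the remaining manipulations are standard applications of Trotter's formula together with the path-lifting argument in $\U(I^\sim)$.
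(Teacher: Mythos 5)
Your proof is correct, but it follows a genuinely different route from the paper's. The paper pulls the problem back to the unitary group: it sets $H=\{u\in \U_A\mid \Ad_u\in G\}$, applies Theorem \ref{unitarycommequal} to get $\overline{(\U_J,\U_A)}\subseteq H$ with $J=\Id([H,A])$, uses Lemma \ref{multKR}(ii) plus the topological perfectness of $\overline{(\U_J,\U_A)}$ (Theorem \ref{unitaryperfect}) to conclude $\V_J\subseteq\overline{(G,\V_A)}$, proves $(G,\V_A)\subseteq \V_J$ by a Hall--Witt manipulation, and only at the end identifies $J=I$ via a separate lemma on approximately inner automorphisms whose unitary displacements are central. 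You instead work with $I$ from the start, which makes the inclusion $\overline{(G,\V_A)}\subseteq \V_I$ essentially immediate: $(\phi,\Ad_v)=\Ad_{\phi(v)v^*}$ with $\phi(v)v^*\in 1+I$ and connected to $1$ by the path $\phi(v(t))v(t)^*$, so no Hall--Witt identity is needed; the continuity of the commutator is justified because $\Aut(A)$ with the point-norm topology is a topological group (inversion is continuous since automorphisms are isometric). For the reverse inclusion you rebuild, at the level of $G$ itself, the Lie-ideal identification that the paper extracts from Theorem \ref{unitarycommequal}: your $L=\overline{\Lin\{\phi(x)-x\}}$ is a closed Lie ideal by \cite{marcoux-murphy}, and your key step $\Id([L,A])=I$ is settled by the iteration-plus-isometry argument $\widetilde\phi^{\,n}(x)=x+n(\widetilde\phi(x)-x)$, which works here because in your quotient $\widetilde\phi(x)-x$ is central for \emph{all} $x$ (a stronger hypothesis than the paper's closing lemma has, where only $\phi(u)u^*$ is central and a multiplicativity trick is needed); then \cite[Lemma 1.6]{lie} gives $L=\overline{[I,A]}$, and the double-Trotter argument together with Lemmas \ref{unitarygenerators} and \ref{multKR}(ii) yields $\V_I\subseteq\overline{(G,\V_A)}$. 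The trade-off: the paper's proof recycles its already-established unitary-level theorems and so is short on new Lie-theoretic work, at the cost of the Hall--Witt computation and the final lemma identifying $I=J$; your proof bypasses Theorems \ref{unitarycommequal} and \ref{unitaryperfect} entirely and avoids the $I=J$ step, at the cost of redoing the Marcoux--Murphy/BKS-style identification for $L$ directly. Both are valid; just make sure, when writing it up, to spell out the compressed "further application of Trotter" step (sums are handled term by term using that $\tfrac{1}{m}(\phi(x)-x)=\phi(x/m)-x/m$, and then one passes to norm limits), exactly as in the proofs of Lemma \ref{unitarygenerators} and Theorem \ref{unitarycommequal}.
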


\begin{proof}
	Let $G$ be as in the statement of the theorem.  Let
	$H=\{u\in \U_A\mid \Ad_u\in G\}$.  Then $H$ is a  norm  closed normal subgroup of
	$\U_A$. By Theorem \ref{unitarycommequal}, $\overline{(\U_J,\U_A)}\subseteq H$, where $J=\Id([H,A])$. Hence,
	$\mathrm{Ad}_u\in G$ for all $u\in \overline{(\U_J,\U_A)}$. Lemma \ref{multKR} (ii) then implies that  $\V_J\subseteq G$. Now taking commutators with $\V_A$ we get
	$\overline{(\V_J,\V_A)}\subseteq \overline{(G,\V_A)}$. Recall now that, by Theorem \ref{unitaryperfect}, the group $\\overline{(\U_J,\U_J)}=\overline{(\U_J,\U_A)}$
	is topologically perfect.  This group is mapped by $\Ad$ as a dense subgroup of $\V_J$  (Lemma \ref{multKR} (ii)). So $\V_J$ is also topologically perfect. Hence, $\V_J=\overline{(\V_J,\V_A)}\subseteq G$. 
	
	Let us  show that $(G,\V_A)\subseteq \V_J$.  Let $\phi\in G$. From  $(\phi,\Ad_u)=\Ad_{\phi(u)u^*}$ we deduce that $\phi(u)u^*\in H$ for all $u\in \U_A$.
Since the set $\{\Ad_u\mid u\in (\U_A,\U_A)\}$ is dense in $\V_A$
it suffices to show that $(\phi,\Ad_u)\in \V_J$ for all
	$u\in (\U_A,\U_A)$.  As argued before on similar occasions, by the identity \eqref{breakprod}  we may choose $u$ from a generating set of  $(\U_A,\U_A)$. So let's assume that $u=(v,w)$, with $v,w\in \U_A$. Let $\nu=\Ad_{v^{-1}}$ and $\omega=\Ad_{w}$. We wish to show that $(\phi,(\nu^{-1},\omega))\in \V_J$. By the Hall-Witt identity,
	\[
(\phi,(\nu^{-1},\omega))^\nu\cdot 
(\nu,(\omega^{-1},\phi))^\omega\cdot
(\omega,(\phi^{-1},\nu))^\phi=1
	\]
(where $\alpha^\beta$ means $\beta\alpha\beta^{-1}$).
It is thus sufficient to show that $(\nu,(\omega^{-1},\phi))$ and $(\omega,(\phi^{-1},\nu))$ belong to $\V_J$. Since they are formally similar, let us deal with $(\nu,(\omega^{-1},\phi))$ only.  We compute that
\[
(\nu,(\omega^{-1},\phi))=(\Ad_{v^*},(\Ad_{w^*},\phi))=\Ad_{(v^*,w^*\phi(w))}.
\] 
Since $w^*\phi(w)\in H$,
\[
(w^*\phi(w),v^*)\in \overline{(H,\U_A)}=\overline{(\U_J,\U_A)}\subseteq \U_J.
\]
Thus, $\Ad_{(v^*,w^*\phi(w))}\in \V_J$, as desired.

Finally, let us prove that  the ideal $J$  agrees with the ideal $I$ from the statement of the theorem.  Recall that $[H,A]$ is spanned by the elements $uau^*-a=\Ad_u a-a$, where $u\in H$ and $a\in A$; i.e., 
where  $\Ad_u\in G$ and $a\in A$. The inclusion $J\subseteq I$ is then clear. To prove the opposite inclusion, let $\phi\in G$ and let us show that $\phi(x)-x\in J$ for all $x$.  Notice that $H$ is invariant under $\phi$, since $\phi\cdot \Ad_u=\Ad_{\phi(u)}$. So  $J$ is invariant under $\phi$. Let $\tilde \phi$ denote the automorphism induced by $\phi$ in the quotient $\widetilde A=A/J$. We wish to show that $\tilde \phi$ is the identity map.   Since $\phi(u)u^*\in H$ for all $u\in\U_A$, $\tilde\phi(u)u^*$ belongs to the center of $\widetilde A$ for all $u\in \U_{\widetilde A}$. That an approximately inner automorphism with this property must be the identity we prove in the lemma below and with this conclude the proof of the theorem. 
\end{proof}

\begin{lemma}
If $\phi$ is an approximately inner automorphism such that $\phi(u)u^*$ is in the center of $A$ for all $u\in \U_A$ then $\phi$ is the identity.
\end{lemma}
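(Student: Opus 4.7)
The plan is to set $c(a) := \phi(a) - a$ and show $c \equiv 0$ by a short algebraic chain. First I would bootstrap the unitary hypothesis to all of $A$: for $h = h^{*} \in A$, the hypothesis gives $\phi(e^{ith}) e^{-ith} = e^{it\phi(h)} e^{-ith} \in Z(A^{\sim})$ for every $t \in \R$, and differentiating at $t = 0$ produces $\phi(h) - h \in Z(A^{\sim}) \cap A = Z(A)$. By complex-linearity, $c(a) \in Z(A)$ for every $a \in A$. Approximate innerness also forces $\phi|_{Z(A)} = \mathrm{id}$, since conjugation by a unitary fixes central elements and this survives point-norm limits.

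From $\phi|_{Z(A)} = \mathrm{id}$ and multiplicativity of $\phi$, the map $c$ becomes $Z(A)$-linear: $c(az) = \phi(a)z - az = c(a)z$ for $z \in Z(A)$. Now $\phi(a^{2}) = \phi(a)^{2}$ together with the centrality of $c(a)$ yields
\[
c(a^{2}) = (a + c(a))^{2} - a^{2} = 2 a c(a) + c(a)^{2}.
\]
Both $c(a^{2})$ and $c(a)^{2}$ lie in $Z(A)$, forcing $a c(a) \in Z(A)$.

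The punch line is to evaluate $c$ on the central element $a c(a)$ in two ways: it is $0$ because $c$ vanishes on $Z(A)$, and it equals $c(a) \cdot c(a) = c(a)^{2}$ by $Z(A)$-linearity. Hence $c(a)^{2} = 0$ in the commutative $\Cstar$-algebra $Z(A)$, which has no non-zero nilpotents, so $c(a) = 0$ and $\phi = \mathrm{id}$. The only truly novel move is the double evaluation of $c(a c(a))$; the rest is routine, with whatever conceptual work there is concentrated in lifting the hypothesis from $\U_{A}$ to all of $A$ via differentiation.
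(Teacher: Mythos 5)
Your proof is correct, and it takes a genuinely different route from the paper's. You differentiate the hypothesis along the one-parameter groups $t\mapsto e^{ith}$, $h\in A_{\sa}$, to conclude that the coboundary $c=\phi-\mathrm{id}$ takes values in the center, use approximate innerness only to see that $\phi$ fixes the center, and then finish with the double evaluation of $c(ac(a))$, which forces $c(a)^2=0$ in the commutative $\Cstar$-algebra $Z(A)$. The paper runs a mirror-image argument: the hypothesis makes $u\mapsto \phi(u)u^*$ a homomorphism of $\U_A$ with abelian range, so $\phi$ fixes $\overline{(\U_A,\U_A)}$, hence (differentiating exponentials and using that $[A,A]\cap A_{\sa}$ spans $[A,A]$) all of $\overline{[A,A]}$; approximate innerness is used instead to place the range of $\phi-\mathrm{id}$ inside $\overline{[A,A]}$, giving $\phi^2=2\phi-\mathrm{id}$, and multiplicativity then yields $(\phi(x)-x)(\phi(y)-y)=0$, with $y=x^*$ concluding. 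Thus the two proofs swap the roles of the hypotheses---you show the coboundary lands in $Z(A)$, where $\phi$ acts trivially by approximate innerness; the paper shows it lands in $\overline{[A,A]}$, where $\phi$ acts trivially by the unitary hypothesis---and each ends with a short identity forced by multiplicativity. Your version is a bit more self-contained, avoiding the relation between $\overline{(\U_A,\U_A)}$ and $[A,A]$ that the paper's section has already set up. One point worth stating explicitly: for non-unital $A$ the phrase ``center of $A$'' must be read as the elements of $A^\sim$ commuting with $A$ (equivalently $Z(A^\sim)$), which is exactly how the lemma is used in the theorem it serves; your reading is the correct one, and the difference quotients $(\phi(e^{ith})e^{-ith}-1)/t$ do stay in this closed set, so the limit $i(\phi(h)-h)$ lies there as well.
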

\begin{proof}
The assumption on $\phi$ implies that $u\mapsto \phi(u)u^*$ is a group homomorphism with abelian range. Hence $\phi(u)u^*=1$ for all  $u\in\overline{(\U_A,\U_A)}$; i.e., $\phi$ is the identity on $\overline{(\U_A,\U_A)}$.   From $\phi(e^{itc})=e^{itc}$ for all $c\in [A,A]\cap A_{\sa}$ and $t\in\R$ we deduce that  $\phi(c)=c$ for all $c\in [A,A]\cap A_{\sa}$. The set  $[A,A]\cap A_{\sa}$ spans $[A,A]$ so $\phi$ is the identity on $[A,A]$. Now, since $\phi$ is approximately inner,
$\phi(x)-x\in \overline{[A,A]}$. So $\phi(\phi(x)-x))=\phi(x)-x$  for all $x$; i.e., $\phi^2(x)=2\phi(x)-x$ for all $x$. Exploiting the multiplicativity of the left side we get 
$(2\phi(x)-x)(2\phi(y)-y)=2\phi(x)\phi(y)-xy$ for all $x,y$ which, after simple manipulations, implies that $(\phi(x)-x)(\phi(y)-y)=0$ for all $x,y$.
Setting $y=x^*$ we get $\phi(x)=x$ for all $x$, as desired.
\end{proof}	

  In \cite{elliott-rordam} Elliott and R{\o}rdam prove that $\V_A$ is topologically simple if $A$
  is a simple unital C*-algebra of real rank zero, stable rank one, and having strict comparison of projections. They then conjecture that $\V_A$ is topologically simple for any simple $A$. The following corollary proves this:
\begin{corollary}\label{answerER}
If $A$ is  a simple $\Cstar$-algebra then $\V_A$ is a topologically simple group.
\end{corollary}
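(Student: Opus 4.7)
The plan is to deduce this corollary essentially directly from Theorem \ref{autocommequal}, exploiting the simplicity of $A$ to force the ideal appearing in that theorem to be all of $A$. The only extra ingredient needed is that $\V_A$ itself is topologically perfect, which will let us close the circle: if $(G,\V_A)$ is dense in $\V_A$ then, since $G$ is closed and normal, $G$ must equal $\V_A$.

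First I would check that $\V_A$ is topologically perfect. This follows from Theorem \ref{unitaryperfect}, which tells us $\overline{(\U_A,\U_A)}$ is topologically perfect, combined with part (ii) of Lemma \ref{multKR} applied with $I=A$, which says that $\Ad$ sends $(\U_A,\U_A)$ onto a dense subgroup of $\V_A$. Hence $\V_A=\overline{(\V_A,\V_A)}$. (Alternatively one can re-derive this directly from Theorem \ref{autocommequal} applied to $G=\V_A$.)

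Next, let $G\subseteq \V_A$ be a closed normal subgroup with $G\neq \{1\}$, and let $I=\Id(\{\phi(x)-x\mid x\in A,\ \phi\in G\})$ as in Theorem \ref{autocommequal}. Pick $\phi\in G$ different from the identity; then $\phi(x)-x\neq 0$ for some $x\in A$, so $I$ is a nonzero closed two-sided ideal of $A$. Simplicity of $A$ forces $I=A$, and consequently $\V_I=\V_A$. Theorem \ref{autocommequal} therefore yields
\[
\overline{(G,\V_A)}=\V_I=\V_A.
\]

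Finally, since $G$ is normal in $\V_A$ the group $(G,\V_A)$ is already contained in $G$, and since $G$ is closed we also have $\overline{(G,\V_A)}\subseteq G$. Combining this with the previous display gives $G=\V_A$, which shows that $\V_A$ has no nontrivial proper closed normal subgroups, i.e.\ it is topologically simple. I do not anticipate any genuine obstacle here; all the work has been concentrated in Theorem \ref{autocommequal} and in the topological perfectness results preceding it, so the argument reduces to assembling these facts.
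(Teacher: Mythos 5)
Your argument is correct and is essentially the paper's own proof: apply Theorem \ref{autocommequal}, note that $G\neq\{1\}$ forces $I\neq 0$ and hence $I=A$ by simplicity, and conclude $\V_A=\overline{(G,\V_A)}\subseteq G$ from normality and closedness of $G$. The appeal to topological perfectness of $\V_A$ is superfluous (the inclusion $\overline{(G,\V_A)}\subseteq G$ already closes the argument), but harmless.
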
	
\begin{proof}
	Let $G$ be a closed subgroup of $\V_A$ such that $G\neq \{1\}$. By the previous theorem
$\V_I\subseteq G$ where $I=\Id(\{\phi(x)-x\mid x\in A,\,\phi\in G\})$. But $I$ is  non-zero, since $G\neq \{1\}$. So  $I=A$, by the simplicity of $A$,  and  $G=\V_A$.
\end{proof}	

\section{Non-closed Lie  ideals revisited}\label{nonclosedlie}
In order to extend the results from the previous sections to non-closed normal subgroups we rely on
results for non-closed  Lie ideals. The statement of these results (e.g., in \cite{lie}) are  not quantitative  enough to be directly used for our purposes here.  Instead,  we need to re-examine the mechanics of their proofs. We do so in this section.

In the sequel  by polynomial we always mean a polynomial in noncommuting variables with coefficients in $\C$.

Let $\pi_n(x_1,\dots,x_{2^n})$ be the polynomial defined by $\pi_0(x)=x$ and
\begin{equation}\label{pin}
\pi_{n+1}(x_1,\dots,x_{2^{n+1}})=[\pi_n(x_1,\dots,x_{2^n}),\pi_n(x_{2^{n}+1},\dots,x_{2^{n+1}})]
\end{equation}
for all $n\geq 0$.
\begin{lemma} \label{lieidentities}
	Let $a,b,c,x_1,\dots,x_8$  be noncommuting variables. 
	\begin{enumerate}[(i)]
		\item
		The polynomial	$[a[x_1,x_2]b,c]$ is expressible as a sum whose terms have either the form $[x_i,r]$ or  $[[x_i,r],s]$, where $i=1,2$  and where $r$ and $s$
		are polynomials in the variables $a,b,c,x_1,x_2$.
		
		\item
		The polynomial $[a\pi_3(x_1,\dots,x_{8})b,c]$ is expressible as a sum whose terms have either  the form
		$[x_i,[r,s]]$ or $[[x_i,[r,s]],[r',s']]$, where $i=1,\dots,8$  and where $r,s,r',s'$
		are polynomials in the variables $a,b,c,x_1,\dots,x_8$.
	\end{enumerate}		
\end{lemma}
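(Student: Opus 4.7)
Both parts are polynomial identities in the free associative algebra on the stated variables, and my plan is to prove them by systematic application of the Leibniz rule for the derivation $[-,z]$ together with the Jacobi identity in its derivation form $[[u,v],w] = [[u,w],v] + [u,[v,w]]$. For part (i), I would first expand along $[-,c]$ and then rewrite the inner commutator via Jacobi:
\begin{align*}
[a[x_1,x_2]b,c] &= [a,c][x_1,x_2]b + a[[x_1,x_2],c]b + a[x_1,x_2][b,c], \\
[[x_1,x_2],c] &= [[x_1,c],x_2] + [x_1,[x_2,c]],
\end{align*}
so every resulting term has the shape $p[x_i,q]r$ with $i\in\{1,2\}$ and $p,q,r$ polynomials in $a,b,c,x_1,x_2$. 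I would reduce each sandwich to the target span using the Leibniz rule for $[x_i,-]$, namely $[x_i,pqr] = [x_i,p]qr + p[x_i,q]r + pq[x_i,r]$, and promote the resulting boundary products $[x_i,p]qr$ and $pq[x_i,r]$ to nested commutators $[[x_i,p],qr]$ via the tautology $XY=YX+[X,Y]$. Iteration, together with cancellations among the several pieces produced by the initial Leibniz expansion, brings the whole expression into the span of $[x_i,\cdot]$ and $[[x_i,\cdot],\cdot]$.

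For part (ii), my plan is to reduce to (i). Write $\pi_3 = [y_1,y_2]$ with $y_1:=\pi_2(x_1,\ldots,x_4)$ and $y_2:=\pi_2(x_5,\ldots,x_8)$, and apply (i) as a polynomial identity with $y_1,y_2$ in place of $x_1,x_2$. This yields $[a\pi_3 b,c]$ as a sum of terms $[y_j,r]$ and $[[y_j,r],s]$ with $r,s$ polynomials in $a,b,c,y_1,y_2$. I would then unpack each $y_j = [[x_\alpha,x_\beta],[x_\gamma,x_\delta]]$ by repeated use of derivation Jacobi, and employ the key rearrangement
\[
[[x_a,R],[x_c,x_d]] = [x_a,[R,[x_c,x_d]]] + [[x_a,[x_c,x_d]],R]
\]
(itself a consequence of Jacobi) to bring a single variable $x_m$ to the outermost bracket. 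The inner arguments $[R,[x_c,x_d]]$ and $[x_c,x_d]$ are already of the form $[r,s]$, matching the target shape $[x_m,[r,s]]$; and the residual $R$ inherited from unpacking $\pi_2$ is itself a commutator (for instance $R=[x_\beta,r]$), so that residual terms $[[x_m,[r,s]],R]$ fit the shape $[[x_i,[r,s]],[r',s']]$. Any remaining product-shaped residuals from the first application of (i) are disposed of by applying (i) once more inside, exploiting that $y_1$ and $y_2$ are themselves commutators.

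The main obstacle is the combinatorial bookkeeping: each application of the Leibniz rule or of $XY=YX+[X,Y]$ produces several new terms, only some of which are immediately in target form, and one has to verify that the residuals not of target form cancel among themselves or are absorbed by further iteration. In (ii) the additional subtlety is ensuring that the outer second argument really has commutator shape $[r',s']$, which relies on the depth-three structure of $\pi_3$ and, where required, on a further expansion of residual polynomials involving the $y_j$'s. The computations are mechanical but substantial, and follow the template of parallel polynomial identities from the author's earlier work \cite{lie}.
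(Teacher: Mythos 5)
There is a genuine gap in your treatment of part (i), and it sits exactly where you locate ``the main obstacle.'' After your first Leibniz expansion you are left with two-sided sandwiches such as $[a,c][x_1,x_2]b$ and $a[x_1,[x_2,c]]b$, with arbitrary polynomials on \emph{both} sides of the commutator, and you then propose to ``reduce each sandwich to the target span,'' with unspecified cancellations absorbing the residuals. But a single such sandwich is in general not even a sum of commutators: in the free algebra $F$ on the variables, the span of all terms $[x_i,r]$ and $[[x_i,r],s]$ lies inside $[F,F]$, whereas $[a,c][x_1,x_2]b=acx_1x_2b-acx_2x_1b-cax_1x_2b+cax_2x_1b$ consists of four monomials lying in four distinct cyclic classes and hence has nonzero image in $F/[F,F]$. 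So no term-by-term reduction can exist; everything hinges on cancellations across different sandwiches. Your proposed iteration cannot produce them on its own: writing $p[x_i,q]r=[x_i,pqr]-[x_i,p]qr-pq[x_i,r]$ and then applying $XY=YX+[X,Y]$ to the boundary products merely moves the one-sided factors from one side to the other (a one-sided product $[x_i,p]q$ is likewise not in $[F,F]$ in general), so the scheme cycles rather than terminates, and the asserted global cancellation is precisely the content of the lemma, left unproven.

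The missing idea --- and the way the paper proceeds --- is to perform the manipulations in the opposite order, with identities chosen so that no cancellation is ever needed. One first rewrites $a[x_1,x_2]b$ itself, \emph{before} bracketing with $c$: using $a[x_1,x_2]b=ab[x_1,x_2]+a[x_1,[x_2,b]]-a[x_2,[x_1,b]]$ and $u[v,w]=[v,uw]-[v,u]w$, one obtains a sum of terms of the two shapes $[x_i,u]$ and $[x_i,u]v$, where crucially every right-hand factor $v$ is either a single variable $x_j$ or itself a commutator $[x_j,\cdot]$. Only then does one apply $[\cdot,c]$, via $[uv,c]=[u,vc]+[v,cu]$; every resulting term is then already of the form $[x_i,r]$ or $[[x_i,r],s]$, with no residuals at all. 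Your part (ii) follows the paper's route (apply (i) with $\pi_2(x_1,\dots,x_4)$ and $\pi_2(x_5,\dots,x_8)$ in place of the two variables, then unpack with the Jacobi identity, the outer second arguments remaining commutators thanks to the depth-three structure of $\pi_3$), and that outline is sound --- but as written it inherits the gap, since it invokes part (i).
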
	

\begin{proof}
	(i) We have 
	\begin{align*}
		a[x_1,x_2]b &=ab[x_1,x_2]+a[[x_1,x_2],b]\\
		&=ab[x_1,x_2]+a[x_1,[x_2,b]]-a[x_2,[x_1,b]].
	\end{align*}
	Using that $x[y,z]=[y,xz]-[y,x]z$, the first and second term on the right can be further manipulated as follows (the third term is formally as the second):
	\begin{align*}
		ab[x_1,x_2] &=[x_1,abx_2]-[x_1,ab]x_2,\\
		a[x_1,[x_2,b]]&=[x_1,a[x_2,b]]-[x_1,a][x_2,b].
	\end{align*}	
	We now apply $[\cdot,c]$ on both sides  and break up the products on the right side using that
	$[xy,c]=[x,yc]+[y,cx]$. This yields the desired result.
	
	(ii) By (i), $[a\pi_3(x_1,\dots,x_8)b,c]$ is a sum of terms of the form $[\pi_2,r]$ and $[[\pi_2,r],s]$,
	where $r$ and $s$ are polynomials and where $\pi_2$ is evaluated on either $(x_1,\dots,x_4)$ or $(x_5,\dots,x_{8})$. Let us show that these terms are expressible in the form required by the lemma. This follows essentially by repeated applications of Jacobi's identity.  For the terms 
	$[\pi_2,r]$  we have that
	\begin{align*}
		[\pi_{2}(x_1,\dots,x_4),r] &=[[[x_1,x_2],[x_3,x_4]],r]\\
		&=	[[x_1,x_2],[[x_3,x_4],r]]-[[x_3,x_4],[[x_1,x_2],r]].
	\end{align*}
	Elements of the form $[[x_i,x_j],[u,v]]$, as on the right side, can be expressed as 
	\[
	[x_i,[x_j,[u,v]]]-[x_j,[x_i,[u,v]]],
	\] 
	whose terms are as required.
	On the terms $[[\pi_2,r],s]$ we first express $[\pi_2,r]$  as a sum whose terms have the form $[x_i,[x_j,[u,v]]]$, as argued above, and then  use that
	\[
	[ [ x_i, [x_j , [u,v] ] ] , s ] =[ x_i, [ [x_j,[u,v]],s] ] - [ [x_j , [u,v] ]  , [  x_i , s ] ].
	\]
	The terms on the right side are again as required.
\end{proof}

Let $A$ be a $\Cstar$-algebra. 
Let $N_2^c$ denote the subset of $N_2\subseteq A$ defined as follows:
\[
N_2^c=\{x\in A\mid \exists e,f\in A_+\hbox{ such that }fx=xe=x\hbox{ and }ef=0\}.
\]
$N_2^c$ is  dense in $N_2$ (\cite[Section 4]{lie}).

\begin{lemma}\label{sumof5}
	Let $x\in N_2^c$ and $r\in A$. Then
	\[
	[x,r]=z_1+z_2+z_3+z_4+(1+z_5)x(1-z_5),
	\]
	for some  $z_1,z_2,z_3,z_4,z_5\in N_2^c$ such that $\|z_i\|\leq \|x\|\cdot \|r\|$ 
for all $i$.	
\end{lemma}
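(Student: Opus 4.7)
The plan is to use the positive witnesses $e,f\in A$ of $x$ (satisfying $fx=xe=x$ and $ef=fe=0$, whence also $ex=xf=0$ and $x^2=0$) to construct the $z_i$'s explicitly. The guiding picture is that $x = fxe$ sits in the corner $(e+f)A(e+f)$ like a strictly upper-triangular element, so $[x,r]$ admits a block decomposition relative to $e$ and $f$ into an ``off-diagonal'' part (which will be automatically square-zero, hence handled by plain $z_i$'s) and a ``diagonal'' part (which is not square-zero, and will be handled by the similarity $(1+z_5)x(1-z_5)$).

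First I would rewrite $[x,r]=xer-rfx$ using $xe=x$ and $fx=x$, and---after replacing $e,f$ by $g(e),g(f)$ for a continuous bump $g$ with $g(0)=0,\,g(1)=1$ if needed---assume $\|e\|,\|f\|\leq 1$ for cleaner spectral manipulations. The off-diagonal pieces of $[x,r]$, meaning terms of the form $x\cdot r'$ or $r'\cdot x$ where $r'$ lies in a corner orthogonal to $x$, are square-zero by $x^2=0$ together with $ef=fe=0$, and admit explicit positive witnesses built from $e$ and $f$. These account for several of the $z_i$'s directly.

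Second, for the diagonal contribution of $[x,r]$ (in the $2\times 2$ picture with $e,f$ as projections and $r_{21}=erf$ the $(2,1)$-block of $r$, this is the $xr_{21}$, $-r_{21}x$ diagonal part), I would choose $z_5\in eAf$ proportional to $r_{21}$. Since $eAf$ sits in the ``opposite corner'' to $x$, the relation $ef=fe=0$ already gives $z_5^2=0$, placing $z_5$ in $N_2^c$ with witnesses derived from $e$ and $f$ with their roles swapped. The identity
\[
(1+z_5)x(1-z_5)-x=[z_5,x]-z_5xz_5
\]
shows that $[z_5,x]$ reproduces the diagonal part of $[x,r]$ directly, while the residual error $z_5xz_5$ lands back in the corner of $x$-type and is absorbed as another off-diagonal $z_i$.

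The main obstacle will be verifying that each $z_i$ actually lies in $N_2^c$, i.e., admits explicit positive witnesses in $A$ rather than merely satisfying $z_i^2=0$. For pieces involving $x$ on the left or right, this is routine from $e,f$ and their functional calculus; for $z_5$ (in the opposite corner) and the error term $z_5xz_5$, witnesses must be built by continuous functional calculus on $r$-dependent positive elements, ensuring a spectral gap at $0$. A secondary difficulty is arranging the norm bound $\|z_i\|\leq\|x\|\cdot\|r\|$ uniformly across the five pieces, which forces a careful balance between the diagonal and off-diagonal contributions in the decomposition.
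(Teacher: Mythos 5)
Your skeleton is in fact the paper's: since $xe=x$ and $fx=x$, your three pieces $x\,er(1-f)=xr(1-f)$, $(1-e)rf\,x=(1-e)rx$, and $[x,erf]$ are precisely the two outer terms and the corner commutator of the paper's decomposition, and you absorb the corner term by the same identity $(1+z)x(1-z)=x+[z,x]-zxz$. The genuine gap sits exactly where you flag it, namely membership of the pieces in $N_2^c$, and the way you propose to close it would not work. Membership in $N_2^c$ requires an \emph{orthogonal pair} of positive one-sided units, and with the plain witnesses $e,f$ these need not exist: for $z_5=erf$ (and likewise for $z_5xz_5=erxrf$) a left witness would have to act as a unit on $e$, but $e$ is merely a positive element and in general no positive element of $A$ acts as a unit on it (there is no spectral gap at the top of its spectrum), so witnesses cannot be obtained ``from $e$ and $f$ with their roles swapped''; similarly, for $xr(1-f)$ and $(1-e)rx$ a witness orthogonal to a unit for $x$ is obstructed because the spectral support of $1-f$ (resp.\ $1-e$) creeps up to the region where $f$ (resp.\ $e$) equals $1$. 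Your fallback of building witnesses ``by functional calculus on $r$-dependent positive elements, ensuring a spectral gap at $0$'' is also not available: $r$ is arbitrary and no such gap can be arranged.

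The repair --- which is exactly the paper's proof --- uses functional calculus on $e$ and $f$ \emph{only}. Since $x(1-e)=0$, any $h(e)$ with $h(1)=1$, $h(0)=0$ is again a right unit for $x$ (and similarly on the left for $f$), so one chooses positive contractions $e_0,e_1,e_2,e_3$ and $f_0,f_1,f_2,f_3$, functions of $e$ and of $f$ respectively, with $e_ie_{i+1}=e_{i+1}$, $f_if_{i+1}=f_{i+1}$, $xe_3=x$, $f_3x=x$, and runs your decomposition with $e_1,f_1$ in place of $e,f$ (harmless, since $xe_1=x$ and $f_1x=x$). Every piece then has explicit orthogonal witnesses: $(1-e_2,e_3)$ for $(1-e_1)rx$, $(f_3,1-f_2)$ for $xr(1-f_1)$, $(e_0,f_0)$ for $z_5=e_1rf_1$ and for $z_5xz_5=e_1rxrf_1$, and $(f,e)$ for $x$ itself; after normalizing $\|x\|=\|r\|=1$ the norm bounds are immediate because all the $e_i,f_i$ are contractions. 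With this interpolation step supplied, your argument coincides with the paper's.
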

\begin{proof}
	Here we follow closely the proof of \cite[Lemma 4.1]{lie}. If either $x=0$ or $r=0$ then the lemma is trivial.
	Let us then assume that $\|x\|=\|r\|=1$. The general case reduces to this one by rescaling.
	Let $e,f\in A_+$ be such that $fx=ex=x$ and $ef=0$. Using functional calculus, let us modify  $e$ and $f$ so that they are contractions. Also with functional calculus, applied to $e$, let us find $e_0,e_1,e_2,e_3\in A_+$ such that $e_ie_{i+1}=e_{i+1}$ for $i=0,1,2$ and 
	$xe_3=x$; we find similarly 
	$f_0,f_1,f_2,f_3\in A_+$  such that $f_if_{i+1}=f_{i+1}$ for $i=0,1,2$ and $f_3x=x$. 
	
	Let $r\in A$.
	\[
	[x,r]=(1-e_1)rx+[e_1rf_1,x]-xr(1-f_1).
	\] 
	One readily checks that $1-e_2$ and $e_3$ are units on the left and on the right respectively for the first term. 
	Similarly,  $f_3$ and $1-f_2$ are units on the left and on the right for the last term.  Thus, these terms  are contractions in $N_2^c$.
	As for the middle term, we have that
	\[
	[e_1rf_1,x]=(1+e_1rf_1)x(1-e_1rf_1)+(e_1rf_1)x(e_1rf_1)-x.
	\]
	The first term on the right has the form $(1+z)x(1-z)$, where $z\in N_2^c$ is a contraction. 
	The other two are contractions in $N_2^c$ as well. 
	The lemma is thus  proved.
\end{proof}

\begin{theorem}\label{commN2}
	Let $A$ be a unital $\Cstar$-algebra without 1-dimensional representations. Then there exists $K\in \N$ and $C>0$ such that for each $c,d\in A$ we have 
	\[
	[c,d]=\sum_{i=1}^K y_i,
	\] 
	for some  $y_1,\dots,y_K\in N_2$ such that $\|y_i\|\leq C\|c\|\cdot \|d\|$ for all $i$.
\end{theorem}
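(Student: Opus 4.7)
The plan is to combine Lemma~\ref{lieidentities}(ii) with Lemma~\ref{sumof5}, after first writing $c$ as a bounded sum of $\pi_3$-type expressions whose internal variables lie in $N_2^c$. Note that merely writing $c$ itself as a sum of square-zero elements is hopeless (the identity of $M_n$ would have to be such a sum, contradicting the trace), so the $\pi_3$-structure is essential. The hypothesis that $A$ has no $1$-dimensional representations is precisely what is needed to force $\Id([A,A]) = A$, and, using the techniques developed in \cite{lie}, this should yield a quantitative decomposition
\[
c = \sum_{k=1}^{N} a_k\,\pi_3(x_{k,1},\dots,x_{k,8})\,b_k,
\]
with $x_{k,j}\in N_2^c$, the number $N$ fixed by $A$ alone, and the product $\|a_k\|\|b_k\|\prod_j\|x_{k,j}\|$ controlled linearly in $\|c\|$ (one distinguished factor absorbing the $\|c\|$ dependence, the rest uniformly bounded).

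Given such a decomposition, expand
\[
[c,d] = \sum_{k=1}^N [a_k\,\pi_3(x_{k,1},\dots,x_{k,8})\,b_k,\,d]
\]
and apply Lemma~\ref{lieidentities}(ii) to each summand, producing a bounded number of terms of the forms $[x_{k,j},[r,s]]$ and $[[x_{k,j},[r,s]],[r',s']]$, where $r,s,r',s'$ are fixed polynomials in $a_k,b_k,d,x_{k,1},\dots,x_{k,8}$ whose norms are polynomially (hence, in view of our normalization, linearly) controlled by the norms of their arguments. For the single-bracket terms one applies Lemma~\ref{sumof5} directly, since $x_{k,j}\in N_2^c$, obtaining a sum of five elements of $N_2^c$ of norm at most $\|x_{k,j}\|\,\|[r,s]\|$.

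The double-bracket terms $[[x_{k,j},[r,s]],[r',s']]$ require two applications of Lemma~\ref{sumof5}. First decompose
\[
[x_{k,j},[r,s]] = z_1+z_2+z_3+z_4+(1+z_5)x_{k,j}(1-z_5)
\]
with $z_i\in N_2^c$ of norm at most $\|x_{k,j}\|\|[r,s]\|$, and commute with $[r',s']$ term by term. For $i=1,\dots,4$ each $[z_i,[r',s']]$ falls under Lemma~\ref{sumof5} again. For the fifth piece, use that $(1+z_5)^{-1}=1-z_5$ (since $z_5^2=0$) to rewrite
\[
\bigl[(1+z_5)x_{k,j}(1-z_5),[r',s']\bigr]=(1+z_5)\bigl[x_{k,j},\,(1-z_5)[r',s'](1+z_5)\bigr](1-z_5),
\]
apply Lemma~\ref{sumof5} inside, and observe that conjugation by $(1+z_5)$ preserves $N_2$ (if $w^2=0$ then also $((1+z_5)w(1-z_5))^2=(1+z_5)w^2(1-z_5)=0$), with norm blowing up by at most $(1+\|z_5\|)^2$, which is uniformly controlled.

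Collecting everything, $[c,d]$ is expressed as a sum of a uniformly bounded number $K$ of elements of $N_2$, each of norm at most $C\|c\|\|d\|$, with $K$ and $C$ determined by the constants from Lemma~\ref{lieidentities}(ii), Lemma~\ref{sumof5}, and the $\pi_3$-decomposition of step one. I expect the main obstacle to be precisely that first step: extracting, from the no-$1$-dimensional-representations hypothesis, a $\pi_3$-decomposition with entries in $N_2^c$ and linear norm control, rather than a merely topological density statement. Retracing the arguments of \cite{lie} and carefully iterating the commutator identities underlying Lemma~\ref{lieidentities} should provide this, with the bookkeeping of norm bounds forming the technical heart of the proof.
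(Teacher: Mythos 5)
Your second half is essentially the paper's argument: expand $[c,d]$ via the commutator identities of Lemma \ref{lieidentities}, then apply Lemma \ref{sumof5} once to single-bracket terms and twice to double-bracket terms, using that conjugation by $1+z_5$ preserves $N_2$ with controlled norm inflation. That part is fine. The genuine gap is the first step, which you yourself flag as the main obstacle: you never produce the decomposition of $c$ as a sum of a \emph{fixed} number of terms $a_k\pi_3(x_{k,1},\dots,x_{k,8})b_k$ with $x_{k,j}\in N_2^c$ and norms linear in $\|c\|$, and as stated this would require additionally proving $\Id(\pi_3(N_2^c))=A$ and then extracting uniform constants, none of which is carried out. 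Moreover, your claim that the $\pi_3$-structure is ``essential'' is not right; what matters is only that square-zero elements sit inside a product flanked by coefficients, so that the identities of Lemma \ref{lieidentities}(i) put them in the outer slot of each resulting bracket.

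The missing idea, and the paper's resolution, is to decompose the \emph{unit} rather than $c$. Since $A$ has no $1$-dimensional representations, $\Id([A,A])=A$, and $\Id([A,A])=\Id([[A,A],[A,A]])=\Id([N_2,N_2])=\Id([N_2^c,N_2^c])$ (using $\overline{\Lin(N_2)}=\overline{[A,A]}$ and the Herstein-type identity $\Id([L,L])=\Id([L,A])$ for Lie ideals). Unitality then gives an \emph{exact} finite expression
\[
1=\sum_{i=1}^{m}a_i[x_i,y_i]b_i,\qquad x_i,y_i\in N_2^c,
\]
fixed once and for all. After rescaling so that $\|c\|=\|d\|=1$, one simply writes $[c,d]=\sum_{i=1}^m[ca_i[x_i,y_i]b_i,d]$ and applies Lemma \ref{lieidentities}(i) (part (ii) and $\pi_3$ are only needed later, in Theorem \ref{HGAGA}); the polynomials $r,s$ produced there depend on the fixed data $a_i,x_i,y_i,b_i$ and on the contractions $c,d$, so their norms are uniformly bounded, and the constants $K$ (at most $25$ square-zero summands per term) and $C$ come out automatically. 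This one-line trick is exactly the quantitative input your step one was missing.
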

\begin{proof}
This theorem is contained  \cite{lie}*{Theorem 4.2}, except that we have added the norm bound on the elements $y_i$.

Since $A$ has no 1-dimensional representations, $A=\Id([A,A])$. We have 
	\[
\Id([A,A])=\Id([[A,A],[A,A]])=\Id([N_2,N_2])=\Id([N_2^c,N_2^c]).
	\]
Hence,  $A=\Id([N_2^c,N_2^c])$.
	Since $A$ is unital, there exist  elements $x_1,y_1,\dots,x_m,y_m\in N_2^c$ and $a_1,b_1,\dots,a_m,b_m\in A$ such that
	\begin{align}\label{1comm}
	1=\sum_{i=1}^ma_i[x_i,y_i]b_i.
	\end{align}

	Let $c,d\in A$. If $c=0$ or $d=0$ the theorem holds trivially.  So assume that $c\neq 0$ and $d\neq 0$, and then, after rescaling, that $\|c\|=\|d\|=1$. Multiplying by $c$ and taking commutator by $d$ in \eqref{1comm} we get 
	\[
	[c,d]=\sum_{i=1}^m[ca_i[x_i,y_i]b_i,d].
	\]
	By Lemma \ref{lieidentities} (i), the $i$-th term of the sum on the right is expressible as a  sum whose  terms 
	have either one of the following forms: $[x_i,r]$, $[y_i,r]$, $[[x_i,r],s]$, or $[[y_i,r],s]$.
	More explicitely,
\begin{equation}\label{morexplicit1}
[c,d]=\sum_{i=1}^m[x_i,r_i^{(1)}]+\sum_{i=1}^m[y_i,r_i^{(2)}]+
\sum_{i=1}^m\sum_{k=1}^{M_{i}}[[x_i,r_k^{(3)}],s_k^{(1)}]+
\sum_{i=1}^m\sum_{k=1}^{M_{i}}[[y_i,r_k^{(4)}],s_k^{(2)}].
\end{equation}
	The elements  $r_i^{(j)}$ and $s_{i}^{(j)}$  are all polynomials
	in $c, d,a_i,x_i,y_i,b_i$. 
	In particular, their norms are uniformly bounded for  $c$ and $d$ of norm 1 (we regard
	$a_i,x_i,y_i,b_i$ as fixed).  Therefore, it suffices to show that
	for each $x\in N_2^c$ and $r,s\in A$ such that $\|r\|,\|s\|\leq M$ the elements $[x,r]$ and $[[x,r],s]$ can be written
	as sums of elements in $N_2$ controlling the number of elements and their norms. 
	Let us consider a term of the form $[x,r]$  
	   By Lemma \ref{sumof5}, 
	   \[
	   [x,r]=z_1+z_2+z_3+z_4+(1+z_5)x(1-z_5),
	   \]
 with $z_i\in N_2$ and $\|z_i\|\leq \|x\|\cdot \|r\|$ for all $i$. Taking commutator with $s$ we get 
 \[
 [[x,r],s]=\sum_{i=1}^4 [z_i,s]+(1+z_5)[x,s'](1-z_5),
 \]
 where $s'=(1-z_5)s(1+z_5)$. By Lemma \ref{sumof5},	the terms $[z_i,s]$ for  $i=1,2,3,4$ and $[x,s']$ are again sums of five elements of $N_2$ whose norms are bounded from above.  Thus, both $[x,r]$ and $[[x,r],s]$
	can be expressed as  sums 
	of at most 25 of elements in $N_2$ with a uniform bound on their norms. 
	\end{proof}

\begin{theorem}[Cf. \cite{lie}*{Corollary 3.8}]\label{commP}
Let $A$ be a unital $\Cstar$-algebra containing a projection $p$ such that both $p$ and $1-p$ are full (i.e., they each generate $A$ as a closed two-sided ideal). Then there exists $K\in \N$
such that for any contraction $c\in A$ there exist projections $p_1,\dots,p_K\in A$
such that
\[
[c^*,c]=\sum_{i=1}^K \epsilon_ip_i,
\]
and where $\epsilon_i=\pm 1$ for all $i$.
\end{theorem}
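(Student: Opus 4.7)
The plan is to reduce via Theorem~\ref{commN2} to handling elements of the form $[w^*,w]$ with $w\in N_2$, and then to express each such commutator as a sum of a bounded number of signed projections via an explicit Bloch-sphere parametrization inside a $2\times 2$ matrix corner of $A$.

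Since both $p$ and $1-p$ are full, neither vanishes in any non-zero quotient of $A$, so $A$ has no one-dimensional representations. Theorem~\ref{commN2} then yields constants $K_1\in\N$ and $C>0$ depending only on $A$ such that $[c^*,c]=\sum_{i=1}^{K_1}y_i$ with $y_i\in N_2$ and $\|y_i\|\le C$ for every contraction $c$. Since $[c^*,c]$ is selfadjoint and $N_2$ is $\ast$-closed, I may replace each $y_i$ by its selfadjoint part $(y_i+y_i^*)/2$, which, by \cite[Lemma~5.1]{tracedim} (the same result invoked in the proof of Theorem~\ref{unitaryperfect}), has the form $[w_i^*,w_i]$ for some $w_i\in N_2$ of norm controlled by $\|y_i\|$. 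Hence
\[
[c^*,c]=\sum_{i=1}^{K_1}[w_i^*,w_i],\qquad w_i\in N_2,\ \|w_i\|\le C',
\]
for a constant $C'$ depending only on $A$. The problem is reduced to showing that each $[w^*,w]$ with $\|w\|\le C'$ is a sum of a bounded number of signed projections in $A$.

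The key algebraic tool for the reduced problem is that whenever $e,f\in A$ are orthogonal projections and $v\in A$ is a partial isometry with $v^*v=e$ and $vv^*=f$, the corner $(e+f)A(e+f)$ is $\ast$-isomorphic to $M_2(eAe)$ via the matrix units $\{e,f,v,v^*\}$. Supposing $w$ lies in the ``$(2,1)$-block'' of such a corner, i.e.\ $w\in fAe$ with $w=v|w|$ and thus $[w^*,w]=|w|^2-v|w|^2v^*$, the element $[w^*,w]$ corresponds under this isomorphism to the diagonal matrix $\mathrm{diag}(|w|^2,-|w|^2)$ in $M_2(eAe)$. Splitting $|w|^2=h_1+\cdots+h_N$ into a bounded number $N$ of positive contractions in $eAe$ by functional calculus (possible because $\||w|^2\|\le C'^{\,2}$), each summand $\mathrm{diag}(h_j,-h_j)$ equals the difference of the two explicit projections
\[
q_j^{\pm}=\frac{1}{2}\begin{pmatrix}1\pm h_j & \sqrt{1-h_j^2}\\ \sqrt{1-h_j^2} & 1\mp h_j\end{pmatrix}\in M_2(eAe),
\]
where the square root is defined by continuous functional calculus on the positive contraction $h_j$. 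Transporting via the isomorphism yields a pair of projections in $A$ whose signed sum realizes the corresponding term.

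The main obstacle is that the polar partial isometry of a general $w\in N_2$ lies only in the bidual $A^{**}$, so a generic $w$ need not fit into a corner with matrix units in $A$. It is precisely here that the fullness of $p$ and $1-p$ should enter essentially: the algebraic identity $1=\sum_{i=1}^{n} a_i\, p\, b_i$ supplied by fullness of $p$ (with $n$ fixed for the algebra), together with its companion for $1-p$, provides enough structure inside $A$ to rewrite an arbitrary $w\in N_2$ as a finite sum of pieces each located in a corner whose matrix units live in $A$. I expect the precise argument to parallel Lemma~\ref{sumof5} and \cite[Corollary~3.8]{lie}, using the dense subset $N_2^c$ and its explicit positive support elements to construct the required partial isometries inside $A$ rather than in $A^{**}$. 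Assembling the pieces yields a bound of the form $K=2K_1Nn$, a constant depending only on $A$.
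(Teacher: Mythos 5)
There is a genuine gap at the decisive step. Your reduction via Theorem~\ref{commN2} and \cite[Lemma 5.1]{tracedim} to sums $\sum_i [w_i^*,w_i]$ with $w_i\in N_2$ of controlled norm is fine (and the verification that $q_j^{\pm}$ are projections and that $q_j^{+}-q_j^{-}=\mathrm{diag}(h_j,-h_j)$ is correct), but the whole argument then hinges on placing each $w_i$ inside a $2\times 2$ corner of $A$ whose matrix units $\{e,f,v,v^*\}$ lie in $A$ itself. You correctly identify this as the obstacle -- the polar partial isometry of $w\in N_2$ lives only in $A^{**}$ -- but you do not resolve it: you only say you ``expect'' an argument parallel to Lemma~\ref{sumof5} to produce the required partial isometries from the fullness of $p$ and $1-p$. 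Fullness gives nothing more than algebraic identities $1=\sum_i a_i p b_i$ and $1=\sum_j a_j'(1-p)b_j'$; it does not provide any comparison of projections or partial isometries in $A$ relating the (bidual) support projections of an arbitrary square-zero element to subprojections of $p$, and the elements $e,f\in A_+$ attached to $w\in N_2^c$ are positive ``support elements,'' not projections, so they do not yield matrix units either. As it stands, the essential step of your proof is an unproven claim, and the proposed mechanism for it is not justified by the hypotheses.

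For contrast, the paper's proof never needs a polar decomposition or new partial isometries. Fullness of $p$ and $1-p$ is used to show $A=\Id([P,P])$ ($P$ the projections of $A$), giving an identity $1=\sum_{i=1}^m x_i[p_i,q_i]y_i$ with $p_i,q_i$ projections already in $A$; then $[c^*,c]=\sum_i[c^*x_i[p_i,q_i]y_i,c]$ is rewritten via Lemma~\ref{lieidentities}(i) as a bounded sum of terms $[p_i,r]$ and $[[p_i,r],s]$ with $r,s$ of controlled norm, and after passing to skewadjoint $r$ (and $s$) each such term is exhibited as a difference of the explicit projections $q(x)$, $x=p_ir(1-p_i)\in p_iA(1-p_i)$, built inside the $2\times 2$ picture determined by the projection $p_i$ itself. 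If you want to salvage your route, you would need to prove -- not just conjecture -- a lemma decomposing an arbitrary $w\in N_2$ (or $N_2^c$) into boundedly many pieces each lying in a corner with matrix units in $A$; the paper's detour through commutators with the given projections exists precisely because no such lemma is available in this generality.
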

\begin{proof}
Let $P$ denote the set of projections of $A$. By the assumption of fullness of $p$ and $1-p$,  we have that 
$A=\Id([P,A])$ (because in $A/\Id([P,A])$ the projections $p$ and $1-p$ become both central and full). Since $\Lin(P)$ is a Lie ideal (\cite[Theorem 4.2]{marcoux-murphy}),
$\Id([P,A])=\Id([P,P])$. (In general, $\Id([L,L])=\Id([L,A])$ for any Lie ideal $L$; this is a consequence of Herstein's \cite[Theorem 1]{herstein} applied in $A/\Id([L,L])$.) Thus, there exist $x_1,y_1,\cdots,x_m,y_m\in A$ and $p_1,q_1,\cdots,p_m,q_m\in P$ such that
\[
1=\sum_{i=1}^m x_i[p_i,q_i]y_i.
\] 
Let $c\in A$ be a contraction. Then
\[
[c^*,c]=\sum_{i=1}^m[c^*x_i[p_i,q_i]y_i,c].
\]
By Lemma \ref{lieidentities} (i), the right side is a sum of terms of the form $[p_i,r]$, $[q_i,r]$, $[[p_i,r],s]$ or $[[q_i,r],s]$.
More explicitely,
\begin{equation}\label{morexplicit2}
[c^*,c]=\sum_{i=1}^m[p_i,r_i^{(1)}]+\sum_{i=1}^m[q_i,r_i^{(2)}]+
\sum_{i=1}^m\sum_{k=1}^{M_{i}}[[p_i,r_k^{(3)}],s_k^{(1)}]+
\sum_{i=1}^m\sum_{k=1}^{M_{i}}[[q_i,r_k^{(4)}],s_k^{(2)}].
\end{equation}
 Let us focus on one of the terms of the form 
$[p,r]$ where $p=p_i$ and $r=r_i^{(j)}$ for some $i$ and $j$. Since $r$ is a polynomial in $c$ and $c^*$ with coefficients in $A$,  there exists $M>0$
such that $\|r\| \leq M$ for all  contractions $c$. Enlarging the number of terms $[p,r]$ if necessary
(by an amount depending on $M$ but independent of $c$)  let us assume that $\|r\|\leq 1$. Since $[c^*,c]$
is selfadjoint, we can replace  $[p,r]$, on the right side of \eqref{morexplicit2}, by its selfadjoint part. Thus, we may assume that
$r$ is skewadjoint.  
	For each $x\in pA(1-p)$ such that  $\|x\|\leq 1$  let us define
	\[
	q(x)=
	\begin{pmatrix}
	\frac{1+\sqrt{1-xx^*}}{2} & \frac x 2\\
	\frac{x^*}{2} & \frac{1-\sqrt{1-xx^*}}{2}
	\end{pmatrix}
	\in 
	\begin{pmatrix}
	pAp & pA(1-p)\\
	(1-p)Ap & (1-p)A(1-p)
	\end{pmatrix}.
	\] 
	A straightforward computation shows that   $q(x)$ is a projection.
Now let $x=pr(1-p)$.  Then	
\[
[p,r]=\begin{pmatrix}
	0 & x\\
	x^* & 0
	\end{pmatrix}=q(x)-q(-x). 
\]
Let us consider the terms of the form $[[p,r],s]$. Enlarging the number of terms again we may assume that 
$\|r\|\leq 1$ and $\|s\|\leq 1$. Braking up $r$ and $s$ into the sum of a selfadjoint and a skewadjoint and taking the selfadjoint part
of  $[[p,r],s]$ we can assume that either both $r$ and $s$ are selfadjoint or both are skewadjoint. The first case reduces
to the second by setting $[[p,r],s]]=[[p,ir],-is]$. In the case that both $r$ and $s$ are skewadjoint we apply twice the argument used above for  terms  $[p,r]$  with $r$ skewadjoint. We thus find that the selfadjoint parts of $[p,r]$ and $[[p,r],s]$ are expressible as sums
whose terms have the form $\epsilon q$, with $\epsilon=\pm 1$ and $q$ a projection. Applying this to each term on the right side of \eqref{morexplicit2} proves the theorem. 
\end{proof}

\section{Non-closed normal subgroups of invertibles}\label{nonclosedGA}
As before, $A$ denotes a $\Cstar$-algebra.
In this section we prove results on normal subgroups of $\G_A$ that are not assumed to be closed at the outset (and may well fail to be so).  Our strategy is  again  to relate normal subgroups to Lie ideals, which may not be closed either. 
We also look closer into the properties of the exponential map in a small neighborhood of the identity.
Theorem \ref{KVeasy} below is a simple consequence of the Campbell-Baker-Hausdorff formula.
Theorem \ref{KVhard} follows from the formalism associated to the first Kashiwara-Vergne equation, as developed by Rouvi\`{e}re in \cite{rouviere}. Both theorems will be  proven in the appendix.

\begin{theorem}\label{KVeasy}
Let  $x,y\in A$ be such that $\|x\|,\|y\|<\frac{\log 2}{8}$. Then
	there exist elements $a=a(x,y)\in A$ and $b=b(x,y)\in A$, depending continuously on $x$ and $y$,  such that 
		\[
e^{x}e^y=e^{x+y+[x,a]+[y,b]},		
\]
and $a(0,0)=b(0,0)=0$.
Moreover, if $x$ and $y$    are skewadjoint then so are  $a$ and $b$.
\end{theorem}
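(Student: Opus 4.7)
The plan is to apply the Campbell--Baker--Hausdorff formula and rearrange its higher order terms. Under the hypothesis (which comfortably implies $\|x\|+\|y\|<\log 2$), Dynkin's form of BCH gives a norm-convergent expansion
\begin{equation*}
\log(e^xe^y)=x+y+\sum_{n\geq 2}H_n(x,y),
\end{equation*}
where each $H_n$ is a homogeneous Lie polynomial of degree $n$ in the free Lie algebra on two generators evaluated on $(x,y)$.

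The core step is to rewrite each $H_n$ ($n\geq 2$) as
\begin{equation*}
H_n(x,y)=[x,A_n(x,y)]+[y,B_n(x,y)],
\end{equation*}
with $A_n,B_n$ themselves Lie polynomials of degree $n-1$. One way is to invoke the Dynkin--Specht--Wever identity, which lets us express any homogeneous Lie element of degree $n\geq 2$ as a linear combination of right-normed (or left-normed) iterated brackets $[z_1,[z_2,\ldots[z_{n-1},z_n]\ldots]]$ with $z_i\in\{x,y\}$; collecting terms by the outermost letter $z_1\in\{x,y\}$ yields such a decomposition, and the inner bracket is automatically a Lie element. Setting
\begin{equation*}
a(x,y):=\sum_{n\geq 2}A_n(x,y),\qquad b(x,y):=\sum_{n\geq 2}B_n(x,y),
\end{equation*}
one then has $\log(e^xe^y)=x+y+[x,a(x,y)]+[y,b(x,y)]$, which is the desired identity after exponentiating.

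The remaining verifications are routine once the preceding regrouping is in hand. I would: (a) deduce norm convergence of the series defining $a,b$ from a Dynkin-style estimate on the coefficients of $A_n,B_n$ (here the conservative radius $\tfrac{\log 2}{8}$ presumably is what is needed so that the rearranged series converges absolutely, the regrouping potentially inflating coefficients by a bounded factor); (b) infer continuity of $a,b$ in $(x,y)$ from absolute convergence together with continuity of each polynomial summand; (c) observe $a(0,0)=b(0,0)=0$ because every $A_n,B_n$ is homogeneous of degree $n-1\geq 1$; and (d) note that since $A_n,B_n$ are Lie polynomials, evaluating them on skewadjoint $x,y$ yields elements of the Lie subalgebra of $A$ generated by $x$ and $y$ inside $A_{\mathrm{sa}}^{\perp}$ (skewadjoints form a real Lie subalgebra closed under brackets), hence $a,b$ are skewadjoint in that case.

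The main (and essentially only) technical obstacle is the quantitative estimate in (a): one must track how the Dynkin--Specht--Wever rewriting affects the size of the Lie series coefficients, to certify that $\sum_n\|A_n(x,y)\|$ and $\sum_n\|B_n(x,y)\|$ converge on the prescribed ball. Everything else is a direct consequence of BCH combined with the algebraic decomposition of Lie polynomials.
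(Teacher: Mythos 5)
Your route is essentially the paper's own (Appendix, Theorem \ref{KVeasyApp}): expand $\log(e^xe^y)$ by the Dynkin form of the Campbell--Baker--Hausdorff series, then use the Dynkin--Specht--Wever map to regroup each homogeneous Lie component by its outermost letter, so that $H_n=[x,A_n(x,y)]+[y,B_n(x,y)]$ with $A_n,B_n$ Lie polynomials of degree $n-1$; continuity, the vanishing at $(0,0)$, and skewadjointness then follow as you indicate. The one substantive analytic point, which you correctly isolate in your step (a) but do not prove---and whose nature you misjudge---is the convergence of the regrouped series. The rewriting is implemented by the operator $\nu_n(v_1\cdots v_n)=\frac1n[v_1,[v_2,\dots,[v_{n-1},v_n]\cdots]]$, and its norm on the degree-$n$ component is only controlled by $2^n$ (a right-normed bracket of length $n$ expands into $2^{n-1}$ associative words), so the coefficient inflation is exponential in the degree, not ``by a bounded factor''. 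As written, (a) is an unproved assertion, and it is precisely the non-formal content of the theorem; without it you have only the well-known formal decomposition.

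The paper closes exactly this gap with a scaling device: it works in the free Banach algebra $\A(\X,\Y)$ with $\ell_1$ norms, where the scaling automorphism $\lambda_{1/2}$ multiplies degree-$n$ terms by $2^{-n}$, so that $\lambda_{1/2}\nu_n$ is contractive; each halving of the variables then absorbs one exponential loss (Lemmas \ref{half} and \ref{halfhalf}), and starting from the CBH series on $\|x\|+\|y\|<\log 2$ and halving twice is what produces the stated radius $\frac{\log 2}{8}$ (a direct count shows one halving, i.e.\ any radius below $\frac{\log 2}{4}$, already suffices for the regrouping estimate, so the hypothesis is indeed conservative, as you guessed). To complete your argument you should make this quantitative: bound the $\ell_1$ coefficient mass $c_n$ of the degree-$n$ Dynkin term, note $\sum_n c_n s^n<\infty$ for $s<\frac{\log 2}{2}$, and check $\sum_n 2^n c_n \delta^n<\infty$ for $\delta<\frac{\log 2}{8}$. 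Doing this in the free Banach algebra and evaluating via its universal property also delivers, with no further work, the continuity of $a,b$ (normally convergent series of iterated brackets) and the skewadjointness claim (apply the continuous Lie involution $z\mapsto -z^*$ termwise), which is how the paper handles those points as well.
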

\begin{proof}
See Theorem \ref{KVeasyApp} of the Appendix.
\end{proof}

\begin{theorem}\label{KVhard}
There exists $\epsilon>0$ such that if  $x,y\in A$ are such that
		$\|x\|,\|y\|\leq \epsilon$ then
there exist $R=R(x,y)\in A$ and $S=S(x,y)\in A$, depending continuously on $x$ and $y$,  such that
		\[
		e^{x+y}=(e^Re^xe^{-R})\cdot (e^Se^ye^{-S}),
		\]
		and 
		\begin{align*}
		R=\frac{y}{4}+[x,R']+[y,R''],\\
		S=-\frac{x}{4}+[x,S']+[y,S''],
		\end{align*}
		for some $R',R'',S',S''\in A$. Moreover,  if $x$ and $y$ are skewadjoint then so are $R$ and $S$.
	\end{theorem}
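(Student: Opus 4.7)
My plan is to translate the multiplicative identity into an additive one via the Campbell--Baker--Hausdorff formula, and then to construct $R$ and $S$ by an iterative procedure modeled on Rouvi\`ere's treatment of the first Kashiwara--Vergne equation. Since $e^R e^x e^{-R} = e^{\mathrm{Ad}(e^R)x} = e^{e^{\mathrm{ad}(R)}(x)}$, introducing the abbreviations $X' := e^{\mathrm{ad}(R)}(x)$ and $Y' := e^{\mathrm{ad}(S)}(y)$ turns the desired identity into $e^{x+y} = e^{X'} e^{Y'}$. For $\|x\|$ and $\|y\|$ small enough that the CBH series for $e^{X'}e^{Y'}$ converges, this is equivalent to $x+y = \log(e^{X'}e^{Y'})$, whose right-hand side is a convergent sum of iterated commutators of $X'$ and $Y'$, and hence (after expanding) of iterated commutators in $x,y,R,S$.

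I would then build $R$ and $S$ as convergent series in $x,y$ starting from the leading terms $R_1 = y/4$ and $S_1 = -x/4$. These choices are forced: at quadratic order the obstruction is the degree-two part of $X' + Y' + \tfrac12[X',Y'] - (x+y)$, which reduces to $[y/4, x] + [-x/4, y] + \tfrac12[x,y] = 0$. For higher orders I proceed inductively: assuming $R_n, S_n$ of the required form solve the equation modulo terms of total degree $\geq n{+}1$ in $x,y$, the residual is an iterated bracket, and the first Kashiwara--Vergne identity (a purely formal statement in the free Lie algebra on two generators) allows me to rewrite this residual as $[x,\Delta R] + [y,\Delta S]$ for correction terms $\Delta R, \Delta S$. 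Taking $R_{n+1} = R_n + \Delta R$ and $S_{n+1} = S_n + \Delta S$ preserves the structural form $R = y/4 + [x,R'] + [y,R'']$ and $S = -x/4 + [x,S'] + [y,S'']$.

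The main obstacle is convergence in the Banach algebra $A$: one needs Rouvi\`ere's explicit estimates on the coefficients of the formal series defining $R$ and $S$ in order to guarantee absolute convergence whenever $\|x\|,\|y\| \leq \epsilon$ for a suitable $\epsilon > 0$, after which continuous dependence of $R, S$ on $(x, y)$ is automatic from the power-series form. The skewadjoint property is then preserved for free: at every inductive step the corrections are real-linear combinations of iterated commutators of $x$ and $y$, and a single bracket of skewadjoint elements is skewadjoint, since if $a^*=-a$ and $b^*=-b$ then $[a,b]^* = b^*a^* - a^*b^* = ba - ab = -[a,b]$, and this property propagates through iterated brackets.
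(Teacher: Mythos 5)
Your formal skeleton is sound: writing the target identity as $x+y=\log\bigl(e^{e^{\ad R}x}\,e^{e^{\ad S}y}\bigr)$, seeding with $R_1=y/4$, $S_1=-x/4$ (note these are not actually forced---only the difference of the two leading coefficients is determined at quadratic order), and correcting degree by degree does produce formal Lie series $R,S$ of the stated shape, since every Lie element of degree $\geq 2$ decomposes as $[x,\cdot]+[y,\cdot]$ (this is Lemma \ref{halfhalf}; no Kashiwara--Vergne input is needed for that rewriting step). The skewadjointness argument via the involution $z\mapsto -z^*$ being a Lie homomorphism is also fine and matches the paper.

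The genuine gap is the step you flag as ``the main obstacle'' and then dispose of by appeal to ``Rouvi\`ere's explicit estimates on the coefficients'': no such ready-made estimates are available for your degree-by-degree construction, and establishing convergence of the resulting series on a ball $\|x\|,\|y\|\leq\epsilon$ with $\epsilon$ independent of $A$ is precisely the content of the theorem beyond the classical formal statement (the paper states explicitly that Rouvi\`ere's setting is formal or finite dimensional). The difficulty is that the residual at order $n+1$ depends on all previous corrections through the CBH series, and the decomposition operator underlying Lemma \ref{halfhalf} (the Dynkin--Specht--Wever projection $\nu_n$) has norm of order $2^n$, so a naive induction does not close up with geometric bounds; one needs either a careful rescaling bookkeeping or, as the paper does, a different mechanism altogether. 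The paper works in the free Banach algebra $\A(\X,\Y)$ and its Banach--Lie subalgebra $\Li(\X,\Y)$, defines $R_t,S_t$ as the solution of the initial value problem \eqref{cauchyRS} driven by the functions $F,G$ from the first Kashiwara--Vergne equation, and gets existence, uniqueness, and convergence on a uniform interval from standard ODE theory in Banach spaces; the flow identity is then verified by differentiating and invoking Lemma \ref{dVtlemma}, the structural form of $R$ and $S$ comes from the scaling homogeneity $\lambda_\alpha R_t=R_{\alpha t}$ plus a single application of Lemma \ref{halfhalf}, and the statement for a general $\Cstar$-algebra follows by the universal property of $\A(\X,\Y)$, which is what makes $\epsilon$ uniform. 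To repair your argument you would have to either carry out genuine norm estimates for the inductive series (not merely cite them) or switch to such a fixed-point/ODE formulation; as written, the analytic heart of Theorem \ref{KVhard} is assumed rather than proved.
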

\begin{proof}
See Theorem \ref{fgODE} of the Appendix.
\end{proof}

\begin{lemma}\label{break}
Let $A$ be a $\Cstar$-algebra. Let $b_1,\dots,b_n\in A$. Then $e^{b_1}\cdots e^{b_n}$
and $e^{\sum_{i=1}^n b_i}$ are equal modulo $(\G_A,\G_A)$.
\end{lemma}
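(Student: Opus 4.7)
The plan is to prove the $n=2$ case first (first for small norms via Theorem~\ref{KVhard}, then for arbitrary $x,y$ via a rescaling argument) and then to bootstrap to general $n$ by an elementary induction.

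For the small-norm base case, let $\epsilon>0$ be as in Theorem~\ref{KVhard} and suppose $\|x\|,\|y\|\leq \epsilon$. Using the trivial identity $ghg^{-1}=(g,h)\cdot h$, Theorem~\ref{KVhard} rewrites as
\[
e^{x+y}=(e^Re^xe^{-R})\cdot (e^Se^ye^{-S})=(e^R,e^x)\cdot e^x\cdot (e^S,e^y)\cdot e^y,
\]
with $(e^R,e^x)$ and $(e^S,e^y)$ both in $(\G_A,\G_A)$. Passing to the abelian quotient $\G_A/(\G_A,\G_A)$, the commutator factors vanish, yielding $e^xe^y\equiv e^{x+y}$ modulo $(\G_A,\G_A)$.

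To remove the small-norm hypothesis, given arbitrary $x,y\in A$ choose $N\in\N$ with $\|x/N\|,\|y/N\|\leq \epsilon$, and apply the base case to $x/N$ and $y/N$ to get $e^{x/N}e^{y/N}\equiv e^{(x+y)/N}$ modulo $(\G_A,\G_A)$. Writing $q$ for the quotient map onto $\G_A/(\G_A,\G_A)$ and using the commutativity of this quotient to regroup an $N$-fold product,
\[
q(e^xe^y)=q(e^{x/N})^N q(e^{y/N})^N=\bigl(q(e^{x/N})q(e^{y/N})\bigr)^N=q(e^{(x+y)/N})^N=q(e^{x+y}),
\]
which settles the $n=2$ case in full generality. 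The lemma then follows by induction on $n$: assuming $e^{b_1}\cdots e^{b_{n-1}}\equiv e^{b_1+\cdots+b_{n-1}}$ modulo $(\G_A,\G_A)$, right-multiplying by $e^{b_n}$ and invoking the $n=2$ case gives $e^{b_1}\cdots e^{b_n}\equiv e^{b_1+\cdots+b_n}$.

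Granted Theorem~\ref{KVhard}, no step is genuinely difficult; the only point that requires care is the rescaling, where one must exploit that $\G_A/(\G_A,\G_A)$---though possibly non-Hausdorff---is abelian, so that the identity $(ab)^N=a^Nb^N$ may be used freely in the quotient.
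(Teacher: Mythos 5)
Your proof is correct and follows essentially the same route as the paper: both reduce to the case $n=2$, apply Theorem~\ref{KVhard} to $b_1/N$, $b_2/N$ for $N$ large, and then pass to the abelian quotient $\G_A/(\G_A,\G_A)$ (equivalently, regroup conjugates modulo commutators) to raise to the $N$-th power. The only cosmetic difference is that you separate the small-norm case from the rescaling step, whereas the paper does both in one line.
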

\begin{proof}
For elements $g,h\in \G_A$ let us write $g\sim h$ if they are equal modulo $(\G_A,\G_A)$; i.e.,$gh^{-1}\in (\G_A,\G_A)$. It suffices to show that $e^{b_1}e^{b_2}\sim e^{b_1+b_2}$.
Let $N\in \N$. Let us choose $N$ large enough such that, by Theorem \ref{KVhard},
we have that $e^{\frac{b_1}N+\frac{b_2}N}=e^Re^{\frac{b_1}N}e^{-R}e^Se^{\frac{b_2}N}e^{-S}$ for some $R,S$. Then
\[
e^{b_1+b_1}=(e^{\frac{b_1}N+\frac{b_2}N})^N=
(e^Re^{\frac{b_1}N}e^{-R}e^Se^{\frac{b_2}N}e^{-S})^N\sim e^{b_1}e^{b_2}.\qedhere
\]
\end{proof}

\begin{theorem}\label{GAGA}
Let $A$ be a   $\Cstar$-algebra.
The following describe the group $(\G_A,\G_A)$:
\begin{enumerate}[(a)]
\item
the group generated by the set $\{e^{[c,d]}\mid c,d\in A\}$,
\item
the set of elements of the form 
$
e^{b_1}e^{b_2}\cdots e^{b_n}
$
where $\sum_{i=1}^n b_i\in [A,A]$.
\end{enumerate}
If $A$ is unital and has no 1-dimensional representations then $(\G_A,\G_A)$
is also equal to
\begin{enumerate}
\item[(c)] the group generated by $1+N_2$.
\end{enumerate}
Moreover, in this case  $(\G_A,\G_A)$ is a perfect group.
\end{theorem}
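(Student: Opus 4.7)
The plan is to prove that the three candidate sets $E_a$, $E_b$, and (under the extra hypothesis) $E_c$ all coincide with $(\G_A, \G_A)$, establishing the easier inclusions via Lemma \ref{break} and the delicate inclusions by iterating Theorem \ref{KVhard}. I would begin with (b). Since the exponent-sum in the definition of $E_b$ is additive under products and negates under inverses, $E_b$ is a subgroup of $\G_A$. The inclusion $(\G_A, \G_A) \subseteq E_b$ is direct: writing $x = e^{a_1}\cdots e^{a_m}$ and $y = e^{c_1}\cdots e^{c_k}$ as products of exponentials, the multiplicative commutator $(x,y)$ expands into a product of $2(m+k)$ exponentials whose exponents sum to $0 \in [A,A]$. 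For the reverse inclusion, given $g = e^{b_1}\cdots e^{b_n}$ with $\sum b_i \in [A,A]$, Lemma \ref{break} gives $g \equiv e^{\sum b_i} \pmod{(\G_A,\G_A)}$. Writing $\sum b_i = \sum_j [c_j, d_j]$ and applying Lemma \ref{break} again, the problem reduces to the key claim that $e^{[c,d]} \in (\G_A, \G_A)$ for every $c, d \in A$.

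The key claim is the heart of the argument. By the rescaling identity $e^{[c,d]} = (e^{[c/n, d/n]})^{n^2}$, it is enough to prove the claim when $\|c\|, \|d\|$ are arbitrarily small. Here I would apply Theorem \ref{KVhard}, exploiting the specific form $R = y/4 + [x, R'] + [y, R'']$, $S = -x/4 + [x, S'] + [y, S'']$, rather than relying only on an approximate CBH/Trotter identity (which yields $e^{[c,d]} \in \overline{(\G_A, \G_A)}$ but not necessarily in $(\G_A, \G_A)$ itself). The main obstacle is to design the iteration so that it terminates after finitely many steps: the naive approach, in which one writes $(e^c, e^d) = e^{[c,d] + P(c,d)}$ via CBH and tries to peel off $e^{-P(c,d)} \in (\G_A, \G_A)$ recursively, produces an infinite sequence of corrections whose product converges only in norm. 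Exact termination requires organizing the decomposition so that the leading $y/4$-term in $R$ absorbs the residual corrections into an explicit finite collection of commutators.

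Once the key claim is in hand, (a) follows. The inclusion $E_a \subseteq E_b$ is trivial since each $e^{[c,d]}$ lies in $E_b$ with $n = 1$. For $(\G_A, \G_A) \subseteq E_a$, I would use the identities $(xy, z) = x(y,z)x^{-1}(x,z)$ and $(x, yz) = (x,y)\cdot y(x,z)y^{-1}$ to reduce to showing $(e^u, e^v) \in E_a$ for $u, v$ in an arbitrarily small neighbourhood of $0$. For such small $u, v$, CBH yields $(e^u, e^v) = e^{Z(u,v)}$ with $Z(u,v) \in [A,A]$. Writing $Z(u,v) = \sum_{i=1}^K [a_i, b_i]$ and rescaling to $Z(u,v)/N^2 = \sum [a_i/N, b_i/N]$, the total norm $\sum \|[a_i/N, b_i/N]\|$ becomes as small as needed to apply Theorem \ref{KVhard} iteratively; this decomposes $e^{Z(u,v)/N^2}$ as a finite product of conjugates of the $e^{[a_i/N, b_i/N]}$. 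Since $h e^{[c,d]} h^{-1} = e^{[hch^{-1}, hdh^{-1}]}$, each conjugate remains of the form $e^{[c,d]}$, so $e^{Z(u,v)} = (e^{Z(u,v)/N^2})^{N^2} \in E_a$.

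For (c) under the extra hypothesis and the perfectness claim, one direction $\langle 1+N_2 \rangle \subseteq (\G_A, \G_A)$ is Lemma \ref{N2comm}. For the reverse, I would replicate the reduction used for (a) but replace the generic commutator decomposition of $Z(u,v)$ by the norm-controlled decomposition $Z(u,v) = \sum y_i$, $y_i \in N_2$, provided by Theorem \ref{commN2}. Since $y_i^2 = 0$ gives $e^{y_i} = 1 + y_i \in 1 + N_2$, and conjugation by $h \in \G_A$ sends $1 + y_i$ to $1 + h y_i h^{-1} \in 1 + N_2$, iterated Theorem \ref{KVhard} expresses $(e^u, e^v)$ as a finite product of elements of $1 + N_2$. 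Finally, Lemma \ref{N2comm} writes each generator $1+x$ of $\langle 1 + N_2 \rangle = (\G_A, \G_A)$ as a commutator of two elements of $(\G_A, \G_A)$ itself, so $(\G_A, \G_A) \subseteq ((\G_A, \G_A), (\G_A, \G_A))$, proving $(\G_A, \G_A)$ is perfect.
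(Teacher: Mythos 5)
Your overall architecture (Lemma \ref{break} for the easy inclusions, a finite induction on Theorem \ref{KVhard} for the hard ones, Theorem \ref{commN2} and Lemma \ref{N2comm} for (c) and perfectness) is the paper's, but two steps are genuinely missing. The first is your ``key claim'' that $e^{[c,d]}\in(\G_A,\G_A)$, which you call the heart of the argument and never actually prove: the proposed rescaling-plus-iteration of Theorem \ref{KVhard}, with the leading $y/4$-term somehow ``absorbing the residual corrections,'' is an aspiration, not an argument --- Theorem \ref{KVhard} only splits $e^{x+y}$ into conjugates of $e^{x}$ and $e^{y}$, and no amount of iterating it converts an additive commutator into a multiplicative one; you flag the termination problem yourself and do not resolve it. The point you are missing is that no iteration is needed here: $[A,A]=[\G_A,A]$ is spanned by elements $hah^{-1}-a$ with $h\in\G_A$ (for instance $[c,d]=[1+c,d]=(1+c)x(1+c)^{-1}-x$ with $x=d(1+c)$, after rescaling so that $1+c\in\G_A$), and Lemma \ref{break} then gives, exactly and not merely up to closure, $e^{hah^{-1}-a}\equiv e^{hah^{-1}}e^{-a}=(h,e^{a})$ modulo $(\G_A,\G_A)$. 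This is precisely how the paper proves that the set in (b) lies in $(\G_A,\G_A)$, and it turns your key claim into a one-line consequence.

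The second gap is in your proof of (a) (and it propagates into (c)): you assert that for small $u,v$ ``CBH yields $(e^{u},e^{v})=e^{Z(u,v)}$ with $Z(u,v)\in[A,A]$'' and then decompose $Z(u,v)$ as a finite sum $\sum_{i=1}^{K}[a_i,b_i]$. Plain CBH only gives $Z(u,v)\in\overline{[A,A]}$ (a convergent infinite series of brackets), and an element of the closure need not be a finite sum of commutators --- exactly the algebraic-versus-closed distinction you are careful about when you reject the Trotter argument. The paper gets around this with Theorem \ref{KVeasy}, which yields the exact finite identity $(b,e^{a})=e^{\,bab^{-1}-a+[bab^{-1},x]-[a,y]}$, whose exponent lies in the algebraic $[A,A]$; you must invoke it (or an equivalent exact finite bracket decomposition) before your finite commutator decomposition and the subsequent KVhard induction can start. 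Once these two repairs are made --- both using tools you already cite --- the remainder of your proposal coincides with the paper's proof.
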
	
\begin{proof}
It is clear that the group described in (a) is contained in the group described in (b). Let us prove that the latter is contained in $(\G_A,\G_A)$. By Lemma \ref{break}, 
proving that $\prod_{i=1}^n e^{b_i}\in (\G_A,\G_A)$ for $\sum_{i=1} b_i\in [A,A]$
reduces to proving that $e^b\in (\G_A,\G_A)$ for  $b\in [A,A]$.
Observe then that
\[
[A,A]=[\G_A,A]=\Lin(\{hah^{-1}-a\mid h\in \G_A,a\in A\}).
\]
Thus, we may express $b$ as  $\sum_{i=1}^m h_ia_ih_i^{-1}-a_i$
for some $h_i\in \G_A$ and $a_i\in A$. By Lemma \ref{break} again we get that
\[
e^b\sim \prod_{i=1}^m e^{h_iah_i^{-1}}e^{-a_i}=\prod_{i=1}^m(h_i,e^{a_i})\in (\G_A,\G_A).
\]
Next let us show that $(\G_A,\G_A)$ is contained in the group described in (a).
Call the latter group $H$. Notice that $H$ is normal in $\G_A$.
Thus, as argued above in similar situations,  to prove that $(\G_A,\G_A)\subseteq H$ it suffices to prove that $(b,e^a)\in H$ for all $b\in \G_A$ and $a\in A$, and  we can assume that $a$ is  small (how small to be specified below). 
By Theorem \ref{KVeasy} for $a$ small enough we have that 
	\begin{align*}
	(b,e^{a})  &= 
e^{bab^{-1}}e^{-a} \\
&=e^{bab^{-1}-a+[bab^{-1},x]-[a,y]},
	\end{align*}
for some $x,y\in A$. Observe that $bab^{-1}-a+[bab^{-1},x]-[a,y]\in [A,A]$. Thus, it suffices to show that if $c\in [A,A]$ then $e^c\in H$. Say $c=\sum_{i=1}^m [e_i,f_i]$. We consider $e^{\frac c N}$ and choose $N$ large enough such that, by Theorem \ref{KVhard}, 
\[
e^{\frac c N}=e^Re^{[e_1,\frac {f_1}N]}e^{-R}e^{S}
e^{\sum_{i=2}^m[e_i,\frac{f_i}N]}e^{-S}.
\] 
The first factor $e^Re^{[e_1,\frac {f_1}N]}e^{-R}$ is in $H$, so we can continue the argument inductively. We thus find that $e^{\frac c N}\in H$ for large enough $N$, which in turn implies that $e^c\in H$, as desired.

Finally, suppose that $A$ is unital and has no 1-dimensional representations.
Let $U_2$ denote the group generated by $1+N_2$. Observe that $U_2$ is normal.
We already know, by Lemma \ref{N2comm}, that   $U_2\subseteq ((\G_A,\G_A),(\G_A,\G_A))$. Thus, once we have shown that $(\G_A,\G_A)\subseteq U_2$ it will both follow that 
$U_2=(\G_A,\G_A)$  and that $(\G_A,\G_A)$  is perfect. 
Let us show that $e^{[c,d]}\in U_2$ for all $c,d\in A$. 
	 By Theorem \ref{commN2}, there exist $y_1,\dots,y_K\in N_2$ such that 
$[c,d]=\sum_{i=1}^K y_i$. Let us show that the exponential of a finite sum of elements
in $N_2$ belongs to $U_2$. This is certainly true for sums of one term: $e^y=1+y\in U_2$ for $y\in N_2$.
 Suppose it is true for sums of $K-1$ terms. Let $y_1,\dots,y_K\in N_2$.
Let $N\in \N$. Let us choose $N$
large enough such that Theorem \ref{KVhard} can be applied to  
$e^{\frac{y_1}{N}+\sum_{i=2}^K\frac {y_k}N}$.
Then
\[
e^{\frac{1}{N}\sum_{i=1}^K y_i}=
e^Re^{\frac{y_1}{N}}e^{-R}e^Se^{\sum_{i=2}^K\frac{y_i}{N}}e^{-S}.
\]
The right side belongs to $U_2$ by induction. Thus, raising the left side to the $N$
we find that it belongs to $U_2$, as desired.
\end{proof}

\begin{proposition}\label{moreconcrete}
Let $A$ be a  $\Cstar$-algebra. Let  $H$ be a subgroup of $\G_A$ containing
$(\G_A,\G_A)$. Then there exists  an additive subgroup   $L\subseteq A$ containing
$[A,A]$  such that 
\[
H=\{e^{b_1}\cdots e^{b_n}\mid \sum_{i=1}^n b_i\in L\}.
\]
\end{proposition}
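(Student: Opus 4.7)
The plan is to define $L$ explicitly as the set of elements of $A$ realizable as the sum of exponents of a decomposition lying in $H$, and then verify the three required properties. Concretely, set
\[
L=\Bigl\{\,s\in A \;\Big|\; s=\sum_{i=1}^n b_i \text{ for some } b_1,\dots,b_n\in A \text{ with } e^{b_1}\cdots e^{b_n}\in H\,\Bigr\}.
\]

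First, I would verify that $L$ is an additive subgroup of $A$. Containment of $0$ is clear since $1\in H$. Closure under addition is immediate by concatenating decompositions: if $s=\sum b_i$ and $t=\sum c_j$ witness $s,t\in L$, then the product of the two corresponding exponential products lies in $H$ and its exponents sum to $s+t$. Closure under negation follows from $(e^{b_1}\cdots e^{b_n})^{-1}=e^{-b_n}\cdots e^{-b_1}\in H$, whose exponents sum to $-s$. The inclusion $[A,A]\subseteq L$ is then immediate from Theorem \ref{GAGA}(b): for $s\in [A,A]$ the one-term decomposition $s=s$ already gives $e^s\in(\G_A,\G_A)\subseteq H$.

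Next, for the claimed equality, the inclusion $H\subseteq\{e^{b_1}\cdots e^{b_n}\mid \sum b_i\in L\}$ is essentially tautological: since exponentials generate $\G_A$, every $h\in H\subseteq \G_A$ admits a presentation $h=e^{b_1}\cdots e^{b_n}$, and then $\sum b_i\in L$ by the very definition of $L$.

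The reverse inclusion is the main point. Given a product $g=e^{b_1}\cdots e^{b_n}$ with $\sum b_i\in L$, there exist $c_1,\dots,c_m$ with $\sum c_j=\sum b_i$ and $e^{c_1}\cdots e^{c_m}\in H$. By Lemma \ref{break} applied to both decompositions,
\[
e^{b_1}\cdots e^{b_n}\equiv e^{\sum b_i}\equiv e^{\sum c_j}\equiv e^{c_1}\cdots e^{c_m} \pmod{(\G_A,\G_A)},
\]
so $g$ and $e^{c_1}\cdots e^{c_m}$ differ by an element of $(\G_A,\G_A)\subseteq H$; hence $g\in H$. There is no real obstacle in this argument: everything reduces to the observation that modulo $(\G_A,\G_A)$ a product of exponentials depends only on the sum of the exponents (Lemma \ref{break}), together with the hypothesis $(\G_A,\G_A)\subseteq H$, which makes $L$ well-behaved as a set-theoretic invariant of $H$.
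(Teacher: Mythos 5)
Your proof is correct and follows essentially the same route as the paper: you define $L$ as the set of sums of exponents of products lying in $H$, check it is an additive subgroup containing $[A,A]$ via Theorem \ref{GAGA}, and use Lemma \ref{break} together with $(\G_A,\G_A)\subseteq H$ for the nontrivial inclusion. Nothing to add.
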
	
\begin{proof}
Define $L\subseteq A$	as 
\[
L=\Big\{\sum_{i=1}^n b_i\mid \prod_{i=1}^n e^{b_i}\in H\Big\}.
\]
It is clear that $L$ is an additive subgroup of $A$ (since $H$ is a group). 
From $(\G_A,\G_A)\subseteq H$  and Theorem \ref{GAGA}  we deduce that $[A,A]\subseteq L$. The inclusion of $H$
in $\{e^{b_1}\cdots e^{b_n}\mid \sum_{i=1}^n b_i\in L\}$ is clear from the  definition of $L$. 
Next we prove the opposite inclusion.

Consider a product $\prod_{i=1}^n e^{b_i}$ such that $b:=\sum_{i=1}^n b_i\in L$. By 
Lemma \ref{break}, $\prod_{i=1}^n e^{b_i}\sim e^b$ (equal modulo commutators).
Since $H$ contains $(\G_A,\G_A)$, $\prod_{i=1}^n e^{b_i}$ is in $H$ if and only if $e^b\in H$. Since $b\in L$, there must exist $c_1,\dots,c_m\in A$ such that $e^{c_1}\cdots e^{c_m}\in H$
and $b=\sum_{j=1}^m c_j$. By Lemma \ref{break}, $e^b\sim e^{c_1}\cdots e^{c_m}\in H$, which implies that $e^{b}\in H$, as desired.
\end{proof}

We now  prove the main result of the section.
 
\begin{theorem}\label{HGAGA}
	Let $A$ be a unital $\Cstar$-algebra. Let $H$ be a subgroup of $\G_A$ normalized by $(\G_A,\G_A)$ and such that $A=\Id([H,A])$.
	Then $(\G_A,\G_A)\subseteq H$. 
\end{theorem}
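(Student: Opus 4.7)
The plan is to show that $e^{[c,d]}\in H$ for every $c,d\in A$, since by Theorem~\ref{GAGA}(a) these elements generate $(\G_A,\G_A)$. The hypothesis $A=\Id([H,A])$ forces $A$ to have no one-dimensional representation, so Theorems~\ref{GAGA} and \ref{commN2} are available in full. A first observation is that the subspace $L:=\Lin\{hah^{-1}-a\mid h\in H,\ a\in A\}=[H,A]$ is invariant under conjugation by $(\G_A,\G_A)$, because that group normalizes $H$; differentiating along $\{e^{tc}\}_{t\in\R}$ for $c\in[A,A]$ yields $[[A,A],L]\subseteq L$. Running the algebraic core of the argument at the end of the proof of Theorem~\ref{subnormal}---which is available since $A$ is unital, sidestepping the approximately central approximate unit manipulation---and applying Herstein's theorem in the quotient $A/\Id([L,A])$, one shows $[H,A]\subseteq\Id([L,A])$, so $\Id(L)=\Id([L,A])=A$.

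Next I would cross over to the multiplicative side. Given $c,d\in A$, rescale so that $[c,d]/N$ has arbitrarily small norm, and use Theorem~\ref{commN2} to write $[c,d]/N=\sum_{i=1}^{K}y_i$ with $y_i\in N_2\subseteq[A,A]$ of uniformly bounded norm. Iterating Theorem~\ref{KVhard} produces an \emph{exact} decomposition
\[
e^{[c,d]/N}=\prod_{i=1}^{K}g_i\,e^{y_i}\,g_i^{-1},
\]
in which each $g_i$ is a product of exponentials of elements of $[A,A]$ (the auxiliary elements $R,S$ from Theorem~\ref{KVhard} lie in $[A,A]$ whenever $x,y$ do, because the leading $y/4$ and $-x/4$ are in $[A,A]$ and the remaining terms are commutators), so $g_i\in(\G_A,\G_A)$ and conjugation by $g_i$ preserves $H$. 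It thus suffices to place $e^{y_i}=1+y_i$ in $H$ for each $i$. For this I would expand $y_i\in[A,A]$ using the Lie-ideal structure of $L$ together with the quantitative control of Theorem~\ref{commN2}, as a finite sum $\sum_j(h_{ij}a_{ij}h_{ij}^{-1}-a_{ij})$ with $h_{ij}\in H$ and $a_{ij}\in[A,A]$, then apply Theorem~\ref{KVhard} one more time together with the identity $(h,e^a)=e^{hah^{-1}}e^{-a}$, which lies in $H$ whenever $h\in H$ and $a\in[A,A]$ because $e^a\in(\G_A,\G_A)$ normalizes $H$. Raising the resulting product to the $N$th power yields $e^{[c,d]}\in H$.

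The main obstacle is precisely this last maneuver: converting the Lie-ideal statement ``$\overline{L}=\overline{[A,A]}$'' into exact multiplicative membership in $H$ without taking closures. In Theorems~\ref{commequal} and \ref{subnormal} this step was handled by Trotter-type limits, which inevitably yield $\overline{H}$ rather than $H$. The replacements here are the quantitative commutator decomposition of Theorem~\ref{commN2}---which produces finite sums of bounded square-zero elements rather than norm-limits---and the exact finite splitting of $e^{x+y}$ into conjugates of $e^x$ and $e^y$ furnished by Theorem~\ref{KVhard}. Together these convert every limit in the closed-case proof into a finite product and yield $(\G_A,\G_A)\subseteq H$ on the nose.
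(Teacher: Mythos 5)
Your reduction is fine up to a point: writing $e^{[c,d]/N}=\prod_i g_i e^{y_i}g_i^{-1}$ with $y_i\in N_2$ and $g_i\in(\G_A,\G_A)$ (the conjugators from Theorem \ref{KVhard} do lie in $(\G_A,\G_A)$, since $R,S\in[A,A]$ when $x,y\in[A,A]$), so the theorem would follow if $1+y\in H$ for small square-zero $y$. But the step that is supposed to put $1+y_i$ in $H$ has two genuine gaps. First, you assert that $y_i\in[A,A]$ can be written as an \emph{exact finite} sum $\sum_j(h_{ij}a_{ij}h_{ij}^{-1}-a_{ij})$ with $h_{ij}\in H$. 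Nothing you invoke yields this: the Lie-ideal machinery (conjugation invariance of $[H,A]$, Herstein's theorem, \cite[Lemma 1.6]{lie}) only gives the closure statement $\overline{\Lin\{hah^{-1}-a\}}=\overline{[A,A]}$, and the quantitative content of Theorem \ref{commN2} concerns writing $[c,d]$ as a bounded finite sum of square-zero elements; it says nothing about expressing elements of $[A,A]$ in terms of $H$. (Even the paper never proves such an exact spanning statement.) Second, even granting such a decomposition, your last move fails: Theorem \ref{KVhard} splits $e^{\sum_j(h_ja_jh_j^{-1}-a_j)}$ into conjugates of the single exponentials $e^{h_ja_jh_j^{-1}-a_j}$, and $e^{hah^{-1}-a}$ is \emph{not} equal to $(h,e^a)=e^{hah^{-1}}e^{-a}$; the only general bridge between them is Trotter's formula, which produces limits and hence only membership in $\overline{H}$ --- exactly what you set out to avoid. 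Splitting $e^{hah^{-1}-a}$ itself by \ref{KVhard} with $x=hah^{-1}$, $y=-a$ only exhibits it as an element of $(\G_A,\G_A)$, not of $H$.

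The paper closes precisely this gap with an idea absent from your proposal: the deformed elements $W_t(h,a)=\frac1t\log((h,e^{ta}))$, for which $e^{tW_t(h,a)}=(h,e^{ta})$ lies in $H$ \emph{exactly} (because $e^{ta}\in(\G_A,\G_A)$ normalizes $H$), combined with a mechanism producing an exact finite identity $[c,d]=\sum_j W_t(h_j',a_j')$ at a fixed small $t\neq 0$. Concretely, one uses unitality and fullness of $\pi_3(\Lambda)$, where $\Lambda=\{hah^{-1}-a\mid h\in H,\ a\in[A,A]\}$, to write $1=\sum_i y_i\pi_3(\overline x_i)z_i$, replaces each entry $hah^{-1}-a=W_0(h,a)$ by $W_t(h,a)$, and restores exactness by inserting the inverse of the perturbed element $y(t)\to 1$; the iterated-bracket shape of $\pi_3$ (Lemma \ref{lieidentities}(ii)) then allows every bracket $[W_t(h,a),\cdot]$ to be absorbed by conjugating $h$ and $a$ by unipotents $1\pm\frac{y}{2}$ with $y\in N_2$ (this is where Theorem \ref{commN2} actually enters), so the whole decomposition stays inside the set $\{W_t(h',a')\mid h'\in H,\ a'\in[A,A]\}$ with bounds uniform in $t$. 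Only then does \ref{KVhard} finish the proof, because each factor it produces is a conjugate, by an element of $(\G_A,\G_A)$, of some $(h_j',e^{ta_j'})\in H$. Without the $W_t$-deformation and the exact, $t$-perturbed partition of unity, your argument cannot bridge the gap between $\overline{H}$ and $H$.
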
	

\begin{proof}
To prove that $(\G_A,\G_A)\subseteq H$ it suffices, by Theorem \ref{GAGA},  to show  that $e^{[c,d]}\in H$ for all $c,d\in A$. 
Before doing so we establish some preparatory results.

Let	
\[
\Lambda=\{hah^{-1}-a \mid h\in H,\,a\in [A,A]\}. 
\]
Our first goal is to show that  $\Id(\pi_3(\Lambda))=A$, where $\pi_3$ is the polynomial defined in \eqref{pin}. 
(Here and below $\pi_3(S,\dots,S)$ is abbreviated  to $\pi_3(S)$ for various sets $S\subseteq A$.)
To prove this, consider first the ideal $\Id([\Lambda,A])$. Let $\widetilde A$ denote the quotient of $A$ by this ideal and let $\widetilde H$ denote the image of $H$ in this quotient. Let $h\in \widetilde H$ and $a\in [\widetilde A,\widetilde A]$. Then $hah^{-1}-a$ is a central element in $\widetilde A$. In particular, it commutes with $h$. Hence, $(hah^{-1}-a)h=[h,a]$ commutes with $h$; i.e., $[h,[h,a]]=0$ for all $a\in [\widetilde A,\widetilde A]$. By Herstein's theorem, $[h,a]=0$ for all $a\in [\widetilde A,\widetilde A]$. In particular,
$[h,[h,b]]=0$ for all $b\in \tilde A$. By Herstein's theorem again, $[h,b]=0$. 
Hence, $[H,A]\subseteq \Id([\Lambda,A])$, and since we have assumed that $[H,A]$ is full,
$\Id([\Lambda,A])=A$. Next  notice that $\overline{\Lin(\Lambda)}$ is a closed subspace  invariant under conjugation by $(\G_A,\G_A)$. Hence, by \cite[Theorem 2.6]{lie}, it  is a Lie ideal of $A$. From the definition of $\Lambda$ we see clearly that $\overline{\Lin(\Lambda)}$ is contained in $\overline{[A,A]}$. Moreover,
$\Id([\Lin(\Lambda),A])=\Id([\Lambda,A])=A$.  It follows by  \cite[Lemma 1.6]{lie} that   $\overline{\Lin(\Lambda)}=\overline{[A,A]}$. Since $\pi_3$ is a multilinear polynomial, we have
$\Id(\pi_3(\Lambda))=\Id(\pi_3([A,A]))$.
 But
 $\overline{\pi_3([A,A])}=\overline{[A,A]}$, by  \cite[Corollary 2.8]{lie}, and $\Id([A,A])=A$  by assumption. 
Hence, $\Id(\pi_3(\Lambda))=A$, as desired.

Since $A$ is unital and $\pi_3(\Lambda)$ is a full set
there exist $\overline x_1,\dots,\overline x_m\in \Lambda^8$ and $y_1,z_1,\dots,y_m,z_m\in A$ such that
\[
1=\sum_{i=1}^m y_i\pi_{3}(\overline x_i)z_i.
\]
For each $i=1,\dots,m$ let's say that 
\[
\overline x_{i}=(h_{i,1}a_{i,1}h_{i,1}^{-1}-a_{i,1},\dots, h_{i,1}a_{i,8}h_{i,8}^{-1}-a_{i,8}),
\] 
where $h_{i,1},\dots,h_{i,8}\in H$ and $a_{i,1},\dots,a_{i,8}\in [A,A]$.

Let  $h\in H$ and $a\in [A,A]$. Let $t\in \R$ be nonzero. Let us define 
\[
W_t(h,a)=\frac{1}{t}\log((h,e^{ta})).
\]
Since  $(h,e^{ta})\to 1$ as $t\to 0$, $W_t(h,a)$ is defined for small enough  $t$. 
 Notice that 
 \[
 \lim_{t\to 0} W_t(h,a)=hah^{-1}-a=:W_0(h,a).
 \]
Hence, if we set 
\[
\overline x_{i}(t)=(W_t(h_{i,1},a_{i,1}),\dots,W_t(h_{i,8},a_{i,8}))
\] 
and
\[
y(t)=\sum_{i=1}^m y_i\pi_{3}(\overline x_{i}(t))z_i,
\]
then $y(t)\to 1$ as $t\to 0$. In particular, there exists $\delta>0$ such that 
$y(t)$ is invertible for all $|t|\leq \delta$. We thus have that
\begin{equation}\label{1sum}
1=\sum_{i=1}^m y(t)^{-1}y_i\pi_{3}(\overline x_i(t))z_i,
\end{equation}
for all $|t|\leq \delta$.

Let  $c,d\in A$. Multiplying by $c$   and taking commutator with $d$ in \eqref{1sum} we get that
\begin{equation}\label{1sum1}
[c,d]=\sum_{i=1}^m [y_i(t)\pi_{3}(\overline x_i(t))z_i,d],
\end{equation}
for all $|t|\leq \delta$, where we have set $y_i(t)=cy(t)^{-1}y_i$.
By Lemma \ref{lieidentities}, for each $i=1,\dots,m$   the term  $[y_i(t)\pi_{3}(\overline x_i(t))z_i,d]$
is expressible as a
sum whose terms have either the form $[W_t(h,a),[r,s]]$ or 
the form $[[W_t(h,a),[r,s]],[r',s']]$. That is,
\begin{equation*}
[c,d]=\sum_{i,l}\sum_{r,s} [W_t(h_{i,l},a_{i,l}),[r,s]]+\sum_{i,l}\sum_{r,s,r',s'} [[W_t(h_{i,l},a_{i,l}),[r,s]],[r',s']] 
\end{equation*}
Here $i=1,\dots,m$, $l=1,\dots,8$, and for each $i,l$ the inner sums run through  finitely many polynomials 
$r,s,r',s'$ on
$y_i(t), W_t(h_{i,l},a_{i,l}),z_i,d$. Note that  $r,s,r',s'$  are continuous functions of  $t$. 
Let us consider first the terms of the form $[W_t(h,a),[r,s]]$. 
By Theorem \ref{commN2}, we have
$[r,s]=\sum_{i=1}^K y_i$ 
for some $y_1,\dots,y_K\in N_2$ such that $\|y_i\|\leq C\|r\|\cdot \|s\|$ for all $i$.  So 
\begin{equation}\label{1sum2}
[W_t(h,a),[r,s]]=\sum_{i=1}^K [W_t(h,a),y_i].
\end{equation}
Let us abbreviate $fzf^{-1}$ to $f\cdot z$. On each term $[W_t(h,a),y_i]$ on the right of \eqref{1sum2} let us use that
\begin{align}
\nonumber
[W_t(h,a),y_i] 
&=(1+\frac {y_i} 2)\cdot W_t(h,a)+(1-\frac{y_i} 2)\cdot W_t(h,a)\\
&=W_t\Big((1+\frac {y_i} 2)\cdot h,(1+\frac {y_i} 2)\cdot a\Big)+
     W_t\Big((1-\frac {y_i} 2)\cdot h,(1-\frac {y_i} 2)\cdot a\Big).\label{N2commsim}
\end{align}
Notice that $(1+z)h(1-z)\in H$ for all $z\in N_2$ since $H$ is normalized by $(\G_A,\G_A)$.  So in this way we can express an element of the form $[W_t(h,a),[r,s]]$ as a sum whose terms have the form $W_t(h',a')$, where $h'\in H$
and $a'\in [A,A]$ are similarity conjugates of $h$ and $a$ respectively.
The same argument  used twice yields that elements  of the form 
$[[W_t(h,a),[r,s]],[r',s']]$ are expressible as sums whose  terms have the form
$W_t(h'',a'')$, where $h''\in H$ and $a''\in [A,A]$ are similarity conjugates of $h$ and $a$. In summary, we have shown that
\begin{equation}\label{2sum}
[c,d]=\sum_{j=1}^M W_t(h_j',a_j'),
\end{equation}
for some $h_1',\dots,h_M'\in H$ and $a_1',\dots,a_M'\in [A,A]$ which depend on $t$. We claim however that $M$ is independent of $t$ and that the norms of the elements $h_j'$ and $a_j'$ are uniformly bounded for all $|t|\leq \delta$. The number of terms in \eqref{2sum} resulted from starting with \eqref{1sum1}, expanding each of its terms first using Lemma \ref{lieidentities},  
then using Theorem \ref{commN2} on the new terms to get \eqref{1sum2}, and further applying \eqref{N2commsim}. These last two steps were repeated on terms of the form $[[W_t(h,a),[r,s]],[r',s']]$.  This makes it clear that $M$ is independent from $t$.
Since the elements $r,s,r',s'$ depend continuosly on $t$ their norms are uniformly bounded on $|t|\leq \delta$. Then Theorem \ref{commN2} and inspection of \eqref{N2commsim} guarantee the uniform boundedness 
of the final $h_j'$s and $a_j'$s on 
$|t|\leq \delta$.

Let's multiply by $t$ on both sides of \eqref{2sum} and then exponentiate:
\begin{equation}\label{almostdone}
e^{t[c,d]}=e^{\sum_{j=1}^M tW_t(h_j',a_j')}.
\end{equation}
Let us show that the right side belongs to $H$ for small enough $t$. Indeed, for small enough $t$  Theorem \ref{KVhard}  is applicable on the right side because the elements
$W_t(h_j',a_j')$ are uniformly bounded on $|t|\leq \delta$. We then get that
\begin{align}\label{almostalmostdone}
e^{\sum_{j=1}^M tW_t(h_j',a_j')} &=(e^R e^{tW_t(h_1',a_1')} e^{-R})
\cdot (e^S e^{\sum_{j=2}^M tW_t(h_j',a_j')}e^{-S}).
\end{align}
Observe that, by Theorem \ref{KVeasy}, 
\[
W_t(h_j',a_j')=h_j'a_j'(h_j')^{-1}-a_j'+[\cdot,\cdot]+[\cdot,\cdot]\in [A,A]
\] 
for all $j$. Then Theorem \ref{KVhard} guarantees that $R,S\in [A,A]$.
It is also true that $a_j'\in [A,A]$ for all $j$, since each $a_j'$ is conjugate to some $a_{i,l}\in [A,A]$. Hence, by Theorem\ref{GAGA}, $e^R,e^S\in (\G_A,\G_A)$ and $e^{ta_j'}\in (\G_A,\G_A)$ for all $j$ and $t$. Since  we have assumed that $H$
is normalized by $(\G_A,\G_A)$,
\[
e^Re^{tW_t(h_1',a_1')}e^{-R}=e^R(h_1,e^{ta_1'})e^{-R}\in H.
\]
That is, the first factor on the right of \eqref{almostalmostdone} is in $H$. Continuing the same argument inductively 
we conclude that the right side of \eqref{almostdone} belongs to $H$ for small enough $t$. Hence,  $e^{t[c,d]}$ belongs to $H$ for  small enough $t$. Since $e^{[c,d]}=(e^{[c,d]/n})^n$ for all $n\in \N$, $e^{[c,d]}$ belongs to $H$ for all $c,d\in A$, as desired.
\end{proof}

\begin{proof}[Proof of Theorem \ref{HGAGAintro} from the introduction]
This is simply Proposition \ref{moreconcrete} and Theorem \ref{GAGA} put together. 
\end{proof}

For  a simple unital $A$ we obtain as a corollary 
a multiplicative analogue of  \cite{hersteinbook}*{Theorem 1.12}.

\begin{corollary}
	Let $A$ be a simple unital $\Cstar$-algebra. Let $H$ be a subgroup of $\G_A$ normalized by $(\G_A,\G_A)$. Then either $H$ is contained in the center of $A$ or $(\G_A,\G_A)\subseteq H$.
The group $(\G_A,\G_A)$ is simple modulo its center.		
\end{corollary}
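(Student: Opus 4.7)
The plan is to deduce this corollary directly from Theorem \ref{HGAGA}, using simplicity to make the ideal-theoretic dichotomy sharp. Let $H\subseteq \G_A$ be normalized by $(\G_A,\G_A)$, and set $I=\Id([H,A])$. Since $A$ is simple, either $I=0$ or $I=A$. In the first case, $[H,A]=0$, so every element of $H$ commutes with every element of $A$; hence $H$ is contained in the center of $A$. In the second case the hypothesis of Theorem \ref{HGAGA} is satisfied, yielding $(\G_A,\G_A)\subseteq H$. This proves the first assertion.

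For the second assertion, write $Z$ for the center of $(\G_A,\G_A)$ and let $N$ be a normal subgroup of $(\G_A,\G_A)$ with $N\not\subseteq Z$. Then $N$ is certainly normalized by $(\G_A,\G_A)$, so the first part applies: either $N$ is contained in the center of $A$ — in which case $N\subseteq Z$, contradicting our hypothesis — or $(\G_A,\G_A)\subseteq N$, forcing $N=(\G_A,\G_A)$. Thus $(\G_A,\G_A)/Z$ has no nontrivial proper normal subgroups.

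The one subtlety is whether the quotient is nontrivial, i.e.\ whether $(\G_A,\G_A)$ itself is strictly larger than its center, and whether we may invoke Theorem \ref{GAGA}(c) to conclude $(\G_A,\G_A)$ is a genuine perfect group. If $A=\C$, the statement is vacuous since $(\G_A,\G_A)=\{1\}$. Otherwise $A$ is a simple unital $\Cstar$-algebra of dimension greater than one and therefore admits no $1$-dimensional representations (a character would yield a proper nonzero ideal). Hence Theorem \ref{GAGA} applies: $(\G_A,\G_A)$ is perfect and contains the nontrivial group generated by $1+N_2$, which is not central in $A$. Consequently $(\G_A,\G_A)/Z$ is a nontrivial group with no proper nontrivial normal subgroups, i.e.\ simple. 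No step presents a real obstacle; the entire argument is a short application of the already-established machinery.
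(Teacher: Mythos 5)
Your argument is correct and follows essentially the same route as the paper: simplicity forces $\Id([H,A])$ to be $0$ or $A$, and in the latter case Theorem \ref{HGAGA} gives $(\G_A,\G_A)\subseteq H$, with simplicity modulo the center an immediate consequence. Your extra remarks on nontriviality of the quotient (via Theorem \ref{GAGA} and $1+N_2$) go beyond what the paper records but are sound.
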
	
\begin{proof}
Suppose that $H$ is not contained in the center of $A$. Then $\Id([H,A])\neq \{0\}$ and so $\Id([H,A])=A$
by the simplicity of $A$. By the previous theorem, $(\G_A,\G_A)=(H,\G_A)\subseteq H$.
\end{proof}

Let $M_\infty(A)=\bigcup_{n=1}^\infty M_n(A)$, where each matrix algebra is embedded in the next as a corner algebra by the map  $a\mapsto \left(\begin{smallmatrix} a&0\\0&0 \end{smallmatrix} \right)$. 
Let $\G_A^{(n)}=\G_{M_n(A)}$ for all $n=1,2\dots$ and $\G_A^{(\infty)}=\bigcup_{n=1}^\infty \G_A^{(n)}$, where each group is embedded in the next by the map
 $g\mapsto \left(\begin{smallmatrix} g&0\\0&1\end{smallmatrix}\right)$.
In \cite{dlHarpe-Skandalis1} de la Harpe and Skandalis define a determinant map on $\G_A^{(\infty)}$ associated to 
the ``universal trace" $A\mapsto A/\overline{[A,A]}$. 
Let us recall a description of the kernel of the  de la Harpe-Skandalis determinant (we do not need
recalling the definition of the determinant itself): 
An element $a\in \G_A^{(\infty)}$ is in the kernel of the de la Harpe-Skandalis determinant if   
whenever $a$ is written as a product $e^{b_1}\cdots e^{b_n}$, with $b_1,\dots,b_n\in M_\infty(A)$, then
\[
\sum_{i=1}^n b_i\in \overline{[A,A]}+2\pi i\{\mathrm{Tr}(p-q)\mid,p,q \hbox{ projections in }M_\infty(A)\}.
\] 
Here  $\mathrm{Tr}((a_{i,j})_{i,j=1}^n):=\sum_{i=1}^n a_{i,i}$.  
Comparing the description of the kernel of the determinant map with the description of $(\G_A,\G_A)$ obtained in Theorem (iii) the following theorem becomes rather clear:
\begin{theorem}
	Let $A$ be a   C*-algebra such  that
\begin{enumerate}
\item[(a)]
$\overline{[A,A]}=[A,A]$.
\end{enumerate}
Then the kernel of the de la Harpe-Skandalis determinant agrees with $(\G_A^{(\infty)},\G_A^{(\infty)})$.
Suppose further that 
\begin{enumerate}
\item[(b)]
for each projection  $p\in M_\infty(A)$ there exists $h\in A_{\sa}$ such that $\mathrm{Tr}(p)-h\in [A,A]$ and $e^{2\pi ih}\in (\G_A,\G_A)$.
\end{enumerate}
Then the kernel of the de la Harpe-Skandalis determinant restricted to $\G_A$ agrees with $(\G_A,\G_A)$.
\end{theorem}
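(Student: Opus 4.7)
The plan is to combine Theorem \ref{GAGA}(b) with the standard identification, valid for any $\Cstar$-algebra $A$, of the additive commutator subspace of matrices as the kernel of the trace modulo $[A,A]$: a matrix $X\in M_n(A)$ belongs to $[M_n(A),M_n(A)]$ if and only if $\Tr(X)\in [A,A]$. This is verified by the usual matrix-unit manipulations $[e_{ij},y e_{ji}]=y e_{ii}-y e_{jj}$ and $[y^{1/2}e_{ij},y^{1/2}e_{ji}]=ye_{ii}-ye_{jj}$, valid also in the non-unital case because $M_n(A)$ is an ideal in $M_n(A^\sim)$ so scalar matrix units may be used freely on one side of a commutator. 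Applying Theorem \ref{GAGA}(b) inside each $M_n(A)$ and taking the union, one obtains that $(\G_A^{(\infty)},\G_A^{(\infty)})$ is exactly the set of products $e^{b_1}\cdots e^{b_n}$ with $b_i\in M_\infty(A)$ satisfying $\Tr(\sum b_i)\in [A,A]$. The de la Harpe--Skandalis kernel, by the description recalled just before the theorem, corresponds to the weaker condition that (some, equivalently every, writing produces) $\Tr(\sum b_i)\in \overline{[A,A]}+2\pi i\Lambda$, where $\Lambda$ denotes the additive subgroup of $A$ generated by the elements $\Tr(p)-\Tr(q)$, for $p,q$ projections in $M_\infty(A)$.

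For the first assertion, hypothesis (a) collapses $\overline{[A,A]}$ to $[A,A]$, reducing matters to showing that the extra summand $2\pi i\Lambda$ contributes nothing on the side of invertibles. This is achieved by the identity $e^{2\pi i p}=1$, valid for any projection $p$: if $a=e^{b_1}\cdots e^{b_n}$ satisfies $\Tr(\sum b_i)=c+2\pi i(\Tr(p)-\Tr(q))$ with $c\in [A,A]$, then inserting the trivial factors $e^{-2\pi i p}$ and $e^{2\pi i q}$ at the end of the product does not change $a$ while shifting the exponent sum by $-2\pi i p+2\pi i q$. The trace of the new sum is
\[
c+2\pi i(\Tr(p)-\Tr(q))-2\pi i\Tr(p)+2\pi i\Tr(q)=c\in [A,A],
\]
so $a\in (\G_A^{(\infty)},\G_A^{(\infty)})$ by Theorem \ref{GAGA}(b). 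The reverse inclusion $(\G_A^{(\infty)},\G_A^{(\infty)})\subseteq\ker\Delta$ is immediate from $[A,A]\subseteq\overline{[A,A]}+2\pi i\Lambda$.

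For the second assertion the obstacle is that the cancelling factors $e^{\pm 2\pi i p}$ used above are matrix-valued, so they do not live in $\G_A$. Hypothesis (b) is precisely tailored to circumvent this: it replaces each projection $p\in M_\infty(A)$ by a selfadjoint $h_p\in A$ with $\Tr(p)-h_p\in [A,A]$ and $e^{2\pi i h_p}\in (\G_A,\G_A)$. Given $a\in \G_A\cap\ker\Delta$ written as $e^{b_1}\cdots e^{b_n}$ with $b_i\in A$ and $\sum b_i=c+2\pi i(\Tr(p)-\Tr(q))$, form
\[
a'=e^{b_1}\cdots e^{b_n}\cdot e^{-2\pi i h_p}\cdot e^{2\pi i h_q}\in \G_A;
\]
its exponent sum equals $c+2\pi i(\Tr(p)-h_p)-2\pi i(\Tr(q)-h_q)$, which lies in $[A,A]$ since $[A,A]$ is a complex linear subspace and both correction terms lie in $[A,A]$. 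Theorem \ref{GAGA}(b) places $a'$ in $(\G_A,\G_A)$, and then hypothesis (b) gives $a=a'\cdot e^{-2\pi i h_q}\cdot e^{2\pi i h_p}\in (\G_A,\G_A)$. The only genuinely non-routine point in the whole argument is recognizing the insertion trick $e^{2\pi i p}=1$ and matching it to hypothesis (b); the rest is bookkeeping around Theorem \ref{GAGA}(b).
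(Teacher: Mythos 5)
Your proof is correct and follows essentially the same route as the paper: the same trick of inserting the trivial factors $e^{\pm 2\pi i p}=1$ to absorb the $2\pi i\,\Tr(p-q)$ contribution, the same reduction to $\sum_i b_i\in[M_N(A),M_N(A)]$ followed by Theorem \ref{GAGA}(b), and, for the second assertion, the same correction by $e^{\pm 2\pi i h}$ with $h$ supplied by hypothesis (b). The only cosmetic difference is that where you sketch the equivalence $\Tr(X)\in[A,A]\Leftrightarrow X\in[M_N(A),M_N(A)]$ by matrix-unit manipulations (in the non-unital case this is cleanest via a factorization $x=yz$ in $A$, rather than using scalar matrix units on one side), the paper simply cites \cite{marcoux06}.
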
	

\begin{proof}
That  $(\G_A^{(\infty)},\G_A^{(\infty)})$ is contained in the kernel of the determinant map 
is proven in \cite{dlHarpe-Skandalis1}.  Let $a=e^{b_1}\cdots e^{b_n}\in \G_A^{(\infty)}$ be in the kernel of the de la Harpe-Skandalis determinant. Then $b_1,\dots,b_n\in M_\infty(A)$ are such  
$\sum_{i=1}^n \mathrm{Tr}(b_i)+2\pi i\mathrm{Tr}(p-q)\in \overline{[A,A]}=[A,A]$ for some projections $p,q\in M_\infty(A)$. The  projections $p,q$ can be added to the list $b_1,\dots,b_n$ so that we may assume that $\sum_{i=1}^n \mathrm{Tr}(b_i)\in [A,A]$ (using that $e^{2\pi i p}=e^{2\pi i q}=1$).   Let us choose $N\geq 1$ such that $b_1,\dots,b_n\in M_N(A)$. 
 Then  $\sum_{i=1}^n \mathrm{Tr}(b_i)\in [A,A]$ is known to imply that
$b_1+\cdots +b_n\in [M_N(A),M_N(A)]$ (see \cite[Lemmas 2.1 and 2.2]{marcoux06}).   Hence, $a\in (\G_{A}^{(N)},\G_{A}^{(N)})$  by Theorem \ref{GAGA}.

Suppose that $A$ also satisfies (b). Let $a\in \G_A$ be in the kernel of the determinant map. Let us write $a=e^{b_1}\cdots e^{b_n}$, with $b_1,\dots,b_n\in A$. Let $p,q\in M_\infty(A)$
be projections such that $\sum_{i=1}^n b_i-2\pi i\mathrm{Tr}(p-q)\in [A,A]$. By assumption, we can  choose  $c,d\in A_{\sa}$  such that $\mathrm{Tr}(p)-c$ and  $\mathrm{Tr}(q)-d$ are in  $[A,A]$ and $e^{2\pi i c}$ and $e^{2\pi id}$ in $(\G_A,\G_A)$.
Then
\[
a=(e^{b_1}\cdots e^{b_n} e^{-2\pi ic}e^{2\pi id})\cdot (e^{-2\pi id})\cdot (e^{2\pi ic}).
\] 
The three factors on the right side are in $(\G_A,\G_A)$ either by assumption or by Theorem \ref{GAGA}.
\end{proof}	
	
A $\Cstar$-algebra is called pure if its Cuntz semigroup is both almost divisible and almost unperforated. 	
By results of \cite{tracedim}  the hypotheses of the theorem above are satisfied if $A$ is a unital pure $\Cstar$-algebra whose  bounded 2-quasitraces are traces. That $[A,A]=\overline{[A,A]}$ in this case is shown in \cite[Theorem 4.10]{tracedim}. That for each projection $p\in M_\infty(A)$ there exists $h\in A_{\sa}$ such that $\mathrm{Tr}(p)-h\in [A,A]$ and $e^{2\pi ih}\in (\G_A,\G_A)$ is shown in \cite[Lemma 6.2]{tracedim}. We thus obtain the following corollary:

\begin{corollary}
	Let $A$ be  unital pure $\Cstar$-algebra whose bounded 2-quastraces are traces. Then the kernel of the de la Harpe-Skandalis determinant on $\G_A$ agrees with $(\G_A,\G_A)$.
\end{corollary}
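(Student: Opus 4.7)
The plan is to deduce this from the previous theorem by verifying its two hypotheses (a) and (b) for a unital pure $\Cstar$-algebra $A$ whose bounded 2-quasitraces are traces. Since the preceding theorem already delivers the conclusion once (a) and (b) are in force, the work is entirely in checking these two conditions, and both are known in the literature on pure $\Cstar$-algebras.

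For hypothesis (a), the equality $[A,A]=\overline{[A,A]}$, the natural reference is the analysis of commutators in pure algebras with the good-quasitrace property. Specifically, I would invoke \cite[Theorem 4.10]{tracedim}, which proves that under exactly these assumptions (unital, pure, and all bounded 2-quasitraces are traces) the commutator subspace is norm-closed. This is a nontrivial fact resting on the comparison theory one obtains from almost unperforation and almost divisibility of the Cuntz semigroup, combined with the tracial characterization of $\overline{[A,A]}$ due to Pop and Ng--Robert.

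For hypothesis (b), one must show that for each projection $p\in M_\infty(A)$ there exists $h\in A_{\sa}$ with $\Tr(p)-h\in[A,A]$ and $e^{2\pi i h}\in (\G_A,\G_A)$. This is \cite[Lemma 6.2]{tracedim}. Roughly, pureness gives enough divisibility in the Cuntz semigroup to realize the trace of any projection in $M_\infty(A)$ by a selfadjoint element $h$ of $A$ itself (modulo $[A,A]$), and one can arrange $h$ to have spectrum small enough that $e^{2\pi i h}$ lies in a suitable commutator subgroup; the perfectness of $(\G_A,\G_A)$ established in Theorem \ref{GAGA} (applicable since pure algebras have no one-dimensional representations) then places $e^{2\pi i h}$ in $(\G_A,\G_A)$.

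With (a) and (b) confirmed, the previous theorem applies verbatim and yields $\ker(\Delta)\cap \G_A = (\G_A,\G_A)$, where $\Delta$ denotes the de la Harpe--Skandalis determinant. The main subtlety, should one wish to reprove rather than cite, would lie in (b): one needs a delicate use of Cuntz-semigroup divisibility together with a small-spectrum selfadjoint lift of a trace value, which is where the pureness hypothesis does its real work. Everything else in the argument is a direct hand-off to Theorem \ref{GAGA} and the preceding theorem on the determinant.
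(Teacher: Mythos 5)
Your proposal is correct and follows exactly the paper's route: the corollary is obtained by verifying hypotheses (a) and (b) of the preceding theorem via \cite[Theorem 4.10]{tracedim} and \cite[Lemma 6.2]{tracedim}, which is precisely what the paper does. The extra commentary on how those cited results are proved is fine but not needed for the argument.
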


\section{Non-closed normal subgroups of unitaries}\label{nonclosedUA}
In this section we prove results on non-closed normal subgroups of $\U_A$.
We follow the same general strategy from the last section, so in some passages our arguments will  be more  succint. 
 	
\begin{lemma}\label{unitarybreak}
Let $A$ be a C*-algebra and $b_1,\dots,b_n\in A_{\sa}$. Then $e^{ib_1}\cdots e^{ib_n}$
is equal to $e^{i\sum_{i=1}^n b_i}$ modulo $(\U_A,\U_A)$.
\end{lemma}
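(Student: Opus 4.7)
The plan is to mimic the proof of Lemma \ref{break} step by step, with the only adjustment being that we keep track of skewadjointness so that all auxiliary group elements we introduce lie in $\U_A$ rather than just in $\G_A$. This is exactly what the ``moreover'' clause of Theorem \ref{KVhard} was designed for.

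First I would reduce to $n=2$: writing $g \sim h$ when $gh^{-1}\in (\U_A,\U_A)$, a straightforward induction shows it suffices to prove $e^{ib_1}e^{ib_2}\sim e^{i(b_1+b_2)}$ for $b_1,b_2\in A_{\sa}$. Indeed, the quotient $\U_A/(\U_A,\U_A)$ is abelian, so if we know the binary case we get $e^{ib_1}\cdots e^{ib_n}\sim e^{ib_1}\cdots e^{ib_{n-2}}e^{i(b_{n-1}+b_n)}$ and iterate.

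For the binary case, I would proceed exactly as in Lemma \ref{break}: choose $N\in\N$ large enough that Theorem \ref{KVhard} applies to the skewadjoint elements $x=ib_1/N$ and $y=ib_2/N$. The theorem then yields elements $R,S\in A$ with
\[
e^{i(b_1+b_2)/N}=e^R e^{ib_1/N}e^{-R}\cdot e^S e^{ib_2/N}e^{-S},
\]
and crucially---by the final sentence of Theorem \ref{KVhard}---$R$ and $S$ are skewadjoint, so $e^R,e^S$ are unitaries (and lie in $\U_A$ since $R,S$ are small). The two factors $e^R e^{ib_1/N}e^{-R}$ and $e^S e^{ib_2/N}e^{-S}$ are conjugates in $\U_A$ of $e^{ib_1/N}$ and $e^{ib_2/N}$ respectively, hence are congruent to them modulo $(\U_A,\U_A)$. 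Raising to the $N$-th power and using that the quotient by $(\U_A,\U_A)$ is abelian (so $N$-th powers commute with the $\sim$ relation and $(e^{ib_1/N}e^{ib_2/N})^N\sim e^{ib_1}e^{ib_2}$), I obtain
\[
e^{i(b_1+b_2)}=\bigl(e^{i(b_1+b_2)/N}\bigr)^N\sim e^{ib_1}e^{ib_2},
\]
which is the desired statement.

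There is no real obstacle here beyond invoking the skewadjoint refinement of the Kashiwara--Vergne-type identity; the argument is word-for-word parallel to Lemma \ref{break}, with $\G_A$ replaced by $\U_A$ and $(\G_A,\G_A)$ replaced by $(\U_A,\U_A)$.
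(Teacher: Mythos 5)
Your proposal is correct and coincides with the paper's own argument, which simply observes that the proof of Lemma \ref{break} carries over verbatim once the skewadjoint refinement of Theorem \ref{KVhard} is invoked so that $e^{R}$ and $e^{S}$ are unitaries in $\U_A$. (One small remark: $e^{R}\in\U_A$ because $t\mapsto e^{tR}$ is a path of unitaries to the identity, not because $R$ is small; this does not affect the argument.)
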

\begin{proof}
The same proof used in Lemma \ref{break} applies here. It must be kept in mind that in
Theorem \ref{KVhard} if the elements $x$ and $y$ are skewadjoint then $R$ and $S$
also are, and so $e^R$ and $e^S$ are unitaries.
\end{proof}

\begin{theorem}
Let $A$ be a $\Cstar$-algebra. The following describe the group $(\U_A,\U_A)$:
\begin{enumerate}[(a)]
\item
the group generated by $\{e^{i[c^*,c]}\mid c\in A\}$,

\item
the elements of the form $e^{ib_1}\cdot \cdots\cdot e^{ib_n}$
with $\sum_{i=1}^n b_i\in [A,A]$ and $b_i\in A_{\sa}$ for all $i$.
\end{enumerate}
If $A$ is unital  and without 1-dimensional representations then $(\U_A,\U_A)$ is also equal to
\begin{enumerate}
\item[(c)]
 the group generated by $\{e^{i[x^*,x]}\mid x\in N_2\}$.
\end{enumerate}
 Moreover, in this case the group $(\U_A,\U_A)$ is perfect. 
\end{theorem}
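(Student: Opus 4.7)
My plan is to prove this as the unitary analogue of Theorem \ref{GAGA}, substituting at each step the unitary form of the tools from its proof: Lemma \ref{unitarybreak} in place of Lemma \ref{break}, the skewadjoint clauses of Theorems \ref{KVeasy} and \ref{KVhard}, and Lemma \ref{harpeskand} in place of Lemma \ref{N2comm}. Concretely, I want to establish the chain $(a)\subseteq (b)\subseteq (\U_A,\U_A)\subseteq (a)$, and then, under the extra hypotheses, $V_2\subseteq(\U_A,\U_A)\subseteq V_2$, where $V_2$ denotes the group in (c).

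The inclusion $(a)\subseteq (b)$ is immediate from $[c^*,c]\in [A,A]\cap A_{\sa}$. For $(b)\subseteq (\U_A,\U_A)$, Lemma \ref{unitarybreak} reduces the task to showing $e^{ib}\in(\U_A,\U_A)$ for every $b\in [A,A]\cap A_{\sa}$. Since $\U_A$ spans $A$ linearly (as already used in Section \ref{closedUA}), one can write $b=\sum_k (u_k a_k u_k^*-a_k)$ with $u_k\in\U_A$ and, after taking selfadjoint parts, $a_k\in A_{\sa}$. Two further applications of Lemma \ref{unitarybreak} then give $e^{ib}\equiv\prod_k e^{iu_ka_ku_k^*}e^{-ia_k}=\prod_k (u_k,e^{ia_k})$ modulo $(\U_A,\U_A)$, placing $e^{ib}$ in $(\U_A,\U_A)$.

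For the harder direction $(\U_A,\U_A)\subseteq (a)$, write $H$ for the (normal) group in (a). By \eqref{breakprod} it is enough to check $(u,e^{ib})\in H$ for $u\in\U_A$ and $b\in A_{\sa}$ small. The skewadjoint clause of Theorem \ref{KVeasy} rewrites $(u,e^{ib})=e^{iubu^*}e^{-ib}$ as $e^{iz}$ for some $z\in [A,A]\cap A_{\sa}$, so the task reduces to placing every such $e^{iz}$ in $H$. The algebraic identity that does the work is that, for $c=a+ib$ with $a,b\in A_{\sa}$, one has $[c^*,c]=2i[a,b]$; combined with the fact that the selfadjoint part of an arbitrary commutator $[x,y]$ is a sum of terms $i[a,b]$ with $a,b\in A_{\sa}$, this lets one write any $z\in [A,A]\cap A_{\sa}$ as $z=\sum_k \tfrac{1}{2}[c_k^*,c_k]$. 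An inductive application of Theorem \ref{KVhard} to $e^{iz/N}$ for $N$ large now decomposes it into a product of conjugates $e^R e^{i[c_k^*,c_k]/(2N)}e^{-R}$; the conjugators $e^R,e^S$ are unitary by the skewadjoint clause of Theorem \ref{KVhard}, and each inner factor equals $e^{i[(c_k/\sqrt{2N})^*,c_k/\sqrt{2N}]}$, a generator of $H$. Normality of $H$ therefore yields $e^{iz/N}\in H$, and raising to the $N$-th power gives $e^{iz}\in H$.

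For the last statement, let $V_2$ be the group in (c). Lemma \ref{harpeskand} (after rescaling $x$) gives $V_2\subseteq((\U_A,\U_A),(\U_A,\U_A))\subseteq (\U_A,\U_A)$. Given that (a) already equals $(\U_A,\U_A)$, the reverse inclusion reduces to placing each generator $e^{i[c^*,c]}$ inside $V_2$. Theorem \ref{commN2} gives $[c^*,c]=\sum_i y_i$ with $y_i\in N_2$; taking selfadjoint parts and invoking \cite[Lemma~5.1]{tracedim} (as was done in the proof of Theorem \ref{unitaryperfect}) rewrites the selfadjoint part of each $y_i$ as $[z_i^*,z_i]$ with $z_i\in N_2$, yielding $[c^*,c]=\sum_i[z_i^*,z_i]$. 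The same KV induction as in the previous paragraph then places $e^{i[c^*,c]}$ inside $V_2$, and perfectness of $(\U_A,\U_A)$ follows from $(\U_A,\U_A)=V_2\subseteq ((\U_A,\U_A),(\U_A,\U_A))$. The main obstacle throughout is the purely algebraic reduction of $[A,A]\cap A_{\sa}$ to sums of the specific commutators $[c^*,c]$ (and of $[x^*,x]$ with $x\in N_2$ in part (c)); the identity $[c^*,c]=2i[a,b]$ and the square-zero selfadjoint/skewadjoint decomposition of \cite[Lemma~5.1]{tracedim} are what make these reductions possible.
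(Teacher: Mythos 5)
Your proposal is correct and follows essentially the same route as the paper: reduce (b) to $e^{ib}$ with $b\in[A,A]\cap A_{\sa}$ via Lemma \ref{unitarybreak}, reduce $(\U_A,\U_A)\subseteq$ (a) to small commutators via Theorem \ref{KVeasy}, write selfadjoint elements of $[A,A]$ as sums of $[c^*,c]$'s, and run the inductive peeling argument with the skewadjoint clause of Theorem \ref{KVhard}, then handle (c) and perfectness with Theorem \ref{commN2}, \cite[Lemma 5.1]{tracedim}, and Lemma \ref{harpeskand}. The only cosmetic difference is that you derive the identity $[c^*,c]=2i[a,b]$ inline where the paper cites \cite[Theorem 2.4]{marcoux06} for the same decomposition of the selfadjoint part of a commutator.
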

\begin{proof}
It is clear that the group described in (a) is contained in the one described in (b). Let us prove that the latter is contained in $(\U_A,\U_A)$. By Lemma \ref{unitarybreak} it suffices to show that 
$e^{ib}\in (\U_A,\U_A)$ for $b\in [A,A]\cap A_{\sa}$. Observe that
\[
[A,A]=[\U_A,A]=\{uau^*-a\mid u\in \U_A,a\in A\}.
\]
So if $b\in [A,A]\cap A_{\sa}$ then $b$ can be expressed as a sum $\sum_{j=1}^n u_ja_ju_j^*-a_i$ for some $u_j\in \U_A$ and $a_j\in A$. Taking the selfadjoint part of this sum
we may assume that $a_j\in A_{\sa}$ for all $j$. Then, by Lemm \ref{unitarybreak},
\[
e^{ib}\sim \prod_{j=1}^ne^{u_j(ia_j)u_j^*}e^{-ia_j}=\prod_{j=1}^n (u_j,e^{ia_j})\in (\U_A,\U_A).
\]

Let us show next that $(\U_A,\U_A)$ is contained in the group described in (a). Call this group $H$. Observe that $H$ is normal in $\U_A$. It thus suffices to show that $(u,e^{ia})\in H$ for all $u\in \U_A$
and $a\in A_{\sa}$, and $a$ can be chosen small. Let us choose $a$ small enough such that,
by Theorem \ref{KVeasy},
\[
(u,e^{ia})=e^{i(uau^*-a)+i[uau^*,x]-i[a,y]},
\]
for some $x$ and $y$ skewadjoint. It now suffices to show that if $c\in [A,A]\cap A_{\sa}$
then $e^{ic}\in H$. The selfadjoint part of a commutator $[e,f]$ can always be
expressed in the form $[z^*,z]+[(z')^*,z']$ (see \cite[Theorem 2.4]{marcoux06}). So $c=\sum_{j=1}^m [z_j^*,z_j]$ for some
$z_j\in A$. We consider $e^{i\frac c N}$ and choose $N$ large enough such that, by Theorem \ref{KVhard}, 
\[
e^{i\frac c N}=(e^Re^{\frac i N[z_j^*,z_j]} e^{-R})\cdot e^{S}
e^{\sum_{j=2}^m\frac i N[z_j^*,z_j]}e^{-S}.
\] 
Notice that Theorem \ref{KVhard} guarantees that $R,S$ are skewadjoint,  so  
$e^{R},e^{S}\in \U_A$. The  first factor on the right side   is in $H$, so we can continue arguing inductively  that $e^{i\frac c N}\in H$ for large enough $N$. This in turn implies that $e^{ic}\in H$, as desired.
 
Finally, suppose that $A$ is unital and has no 1-dimensional representations.
Let $U_2$ denote the group described in (c). Observe that $U_2$ is normal.
We already know, by Lemma \ref{harpeskand}, that   $U_2\subseteq ((\U_A,\U_A),(\U_A,\U_A))$. Thus, once we have shown that $(\U_A,\U_A)\subseteq U_2$ it will both follow that 
$U_2=(\U_A,\U_A)$  and that $(\U_A,\U_A)$  is perfect. 
Let us show that $e^{i[c^*,c]}\in U_2$ for all $c\in A$. 
	 By Theorem \ref{commN2}, there exist $y_1,\dots,y_K\in N_2$ such that 
$[c^*,c]=\sum_{j=1}^K y_j$. Since the left side is selfadjoint, we can replace the right side by its selfadjoint part. For $y\in N_2$ we have 
$\frac{y+y^*}{2}=[z^*,z]$
for some $z\in N_2$, by \cite[Lemma 5.1]{tracedim}. Thus, 
$[c^*,c]=\sum_{j=1}^K[z_j^*,z_j]$, where $z_j\in N_2$ for all $j$.
To prove that $e^{i\sum_{j=1}^K[z_j^*,z_j]}$ belongs to $U_2$ we again proceed inductively, relying on Theorem \ref{KVhard}: For large enough $N$ we have
\[
e^{\frac{1}{N}[c^*,c]}=
(e^Re^{\frac i N[z_1^*,z_1]}e^{-R})e^Se^{\sum_{j=2}^K \frac i N[z_j^*,z_j]}e^{-S}.
\]
The first factor on the right side belongs to $U_2$. Continuing by induction we get that $e^{\frac i N[c^*,c]}\in U_2$ for large enough $N$ which in turn implies that $e^{i[c^*,c]}\in U_2$, as desired.
\end{proof}

In the proof of Theorem \ref{HUAUA} below---the analogue of Theorem \ref{HGAGA}---we cannot make the arguments go through  without these additonal restrictions: we assume that the $\Cstar$-algebra 
has two full orthogonal projections and we deal only with normal subgroups of $\U_A$
rather than normalized by $(\U_A,\U_A)$. 

\begin{theorem}\label{HUAUA}
Let $A$ be a unital C*-algebra containing two full orthogonal projections. 
Let $H$ be a normal subgroup of $\U_A$ such that $\Id([H,\U_A])=A$. Then
$(\U_A,\U_A)\subseteq H$.
\end{theorem}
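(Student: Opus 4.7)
The plan is to adapt the proof of Theorem \ref{HGAGA}, its invertible analogue, to the unitary setting. By part (a) of the preceding theorem, it suffices to show that $e^{i[c^*,c]}\in H$ for every $c\in A$. As in the invertible case, for $u\in H$ and skewadjoint $a\in [A,A]$ I would introduce the skewadjoint deformation
\[
W_t(u,a)=\tfrac{1}{t}\log\bigl((u,e^{ta})\bigr),
\]
well-defined for small $t$, with $W_0(u,a)=uau^*-a$.

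Setting $\Lambda=\{uau^*-a:u\in H,\,a\in [A,A]\cap iA_{\sa}\}$, the Herstein-theorem argument from the proof of Theorem \ref{HGAGA} works verbatim: $\Lambda$ becomes central in $\widetilde A:=A/\Id([\Lambda,A])$, forcing $[h,[h,a]]=0$ in $\widetilde A$ for $h$ in the image of $H$ and $a\in [\widetilde A,\widetilde A]$, so that $[H,A]\subseteq\Id([\Lambda,A])$. The hypothesis $\Id([H,\U_A])=\Id([H,A])=A$ then yields $\Id([\Lambda,A])=A$. Since $\overline{\Lin(\Lambda)}$ is a closed Lie ideal (by normality of $H$ in $\U_A$ and \cite[Theorem 2.3]{marcoux-murphy}) contained in $\overline{[A,A]}$, \cite[Lemma 1.6]{lie} gives $\overline{\Lin(\Lambda)}=\overline{[A,A]}$, and multilinearity of $\pi_3$ with \cite[Corollary 2.8]{lie} then produces $\Id(\pi_3(\Lambda))=A$. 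Since $A$ is unital, I can write $1=\sum_i y_i\pi_3(\bar x_i)z_i$ with $\bar x_i\in\Lambda^8$ of entries $u_{i,l}a_{i,l}u_{i,l}^*-a_{i,l}$. Replacing each entry by $W_t(u_{i,l},a_{i,l})$ produces $y(t)\to 1$, hence invertible for small $t$, and gives $1=\sum y(t)^{-1}y_i\pi_3(\bar x_i(t))z_i$. Multiplying by $ic^*$ and taking the commutator with $c$ puts $i[c^*,c]$ on the left; Lemma \ref{lieidentities}(ii) expands the right side as a sum of terms of the form $[W_t(u,a),[r,s]]$ and $[[W_t(u,a),[r,s]],[r',s']]$.

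The key departure from the invertible case is how these iterated brackets become sums of $W_t(u',a')$. The polarization identity
\[
[a,b]=\tfrac{-i}{2}[c^*,c],\qquad c:=a+ib,
\]
valid for $a,b$ selfadjoint, shows that any commutator $[r,s]$ is a bounded $\C$-linear combination of terms $[c^*,c]$; Theorem \ref{commP}, enabled by the hypothesis of two full orthogonal projections, further decomposes each $[c^*,c]$ as a uniformly bounded signed sum of projections. For a projection $p$, setting $v_p=e^{i\pi p/2}\in\U_A$ a direct computation gives
\[
[z,p]=\tfrac{1}{2i}\bigl(v_p^*zv_p-v_pzv_p^*\bigr).
\]
Since $v_pW_t(u,a)v_p^*=W_t(v_puv_p^*,v_pav_p^*)$ and $v_puv_p^*\in H$ by normality of $H$ in $\U_A$, each $[W_t(u,a),p]$ becomes an explicit combination of $W_t$'s of elements of $H$ with skewadjoint entries in $[A,A]$; two nested applications handle $[[W_t(u,a),[r,s]],[r',s']]$. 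This yields $i[c^*,c]=\sum_{j=1}^M W_t(u_j',a_j')$ with $u_j'\in H$, $a_j'\in [A,A]\cap iA_{\sa}$, uniformly bounded on a small interval $|t|\le\delta$.

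To conclude, for $t$ small enough Theorem \ref{KVhard} (whose outputs $R,S$ remain skewadjoint when the inputs are) can be iterated to express
\[
e^{ti[c^*,c]}=e^{\sum_j tW_t(u_j',a_j')}
\]
as a product of unitary conjugates of $e^{tW_t(u_j',a_j')}=(u_j',e^{ta_j'})$, each of which lies in $H$ by normality. Hence $e^{ti[c^*,c]}\in H$ for small $t$, and rescaling $c\mapsto c/\sqrt{N}$ followed by raising to the $N$th power extends this to $e^{i[c^*,c]}\in H$ for arbitrary $c$. The main obstacle I anticipate is uniform norm bookkeeping through the polarization and projection decompositions; a secondary, but structurally essential, point is that the reflection-type unitaries $v_p=e^{i\pi p/2}$ need not belong to $(\U_A,\U_A)$, which is precisely why the theorem demands $H$ be \emph{normal} in $\U_A$ rather than merely normalized by $(\U_A,\U_A)$, and also why the full-projection hypothesis is used to invoke Theorem \ref{commP}.
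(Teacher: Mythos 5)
Your overall architecture matches the paper's proof: the fullness of $\pi_3(\Lambda)$ via Herstein's theorem, the deformations $W_t(u,a)$, Theorem \ref{commP} combined with a commutator-with-projection identity implemented by a unitary $v_p$, and the iterative use of Theorem \ref{KVhard} with skewadjoint data, with normality in $\U_A$ (rather than normalization by $(\U_A,\U_A)$) correctly identified as the reason the conjugations by $v_p$ and by $e^{ta}$ stay inside $H$. The genuine gap is in the passage from the iterated brackets $[W_t(u,a),[r,s]]$ to the asserted identity $i[c^*,c]=\sum_{j=1}^M W_t(u_j',a_j')$. Your polarization writes $[r,s]$ as a $\C$-linear combination of elements $[c_k^*,c_k]$ with coefficients $\pm\tfrac12,\pm\tfrac i2$, and your identity $[z,p]=\tfrac1{2i}(v_p^*zv_p-v_pzv_p^*)$ contributes a further factor $\tfrac1{2i}$; so what you actually obtain is $\sum_j\mu_j W_t(u_j',a_j')$ with scalars $\mu_j$ of the form $\pm\tfrac14,\pm\tfrac i4$, not a plain sum. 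These scalars cannot be absorbed: $a\mapsto W_t(u,a)$ is not homogeneous, so $\mu W_t(u,a)$ is not of the form $W_t(u',a')$, and the endgame requires each summand to be literally $\pm W_t(u',a')$ so that $e^{\pm tW_t(u',a')}=(u',e^{ta'})^{\pm1}\in H$; moreover, for non-real $\mu_j$ the summand is not even skewadjoint, so the exponents fed into Theorem \ref{KVhard} are no longer skewadjoint, $e^R,e^S$ need not be unitaries, and normality of $H$ in $\U_A$ can no longer be invoked. So the obstacle is not the ``norm bookkeeping'' you flag, but the scalar coefficients themselves.

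The paper's proof is organized precisely to avoid this. Its $W_t(u,a)=\tfrac1{it}\log((u,e^{ita}))$ with $a\in A_{\sa}$ is selfadjoint; one takes the selfadjoint part of the whole identity, which replaces each $[r,s]$ by its skewadjoint part. Writing $r=r_1+ir_2$, $s=s_1+is_2$ with $r_k,s_k$ selfadjoint, that skewadjoint part is $[r_1,s_1]-[r_2,s_2]$, a sum of two commutators of selfadjoints, each of which is exactly of the form $i[z^*,z]$ (this is the use of \cite[Theorem 2.4]{marcoux06}); positive real factors and the normalization $\|z\|\leq1$ are absorbed into $z$ because $[z^*,z]$ is quadratic in $z$. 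Theorem \ref{commP} then produces terms $[W_t,2i\epsilon p]$, and the scalar $2i$ is exactly the one matched by the identity $[W_t,2ip]=v_pW_tv_p^*-v_p^*W_tv_p$ (the same identity as yours, rescaled), so only signs $\pm1$ survive; these are harmless since $e^{-itW_t(u',a')}=(u',e^{ita'})^{-1}\in H$. Your argument is repaired by the same reduction in your conventions: take the skewadjoint part of the identity for $i[c^*,c]$ (legitimate since your $W_t$ is skewadjoint, so the skewadjoint part of $[W_t,x]$ is $[W_t,\mathrm{skew}(x)]$), decompose $\mathrm{skew}([r,s])$ as two terms $i[z^*,z]$ rather than fully polarizing $[r,s]$, and then your projection identity applies with coefficient exactly $1$. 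With that change, and the uniform-boundedness bookkeeping done as in the proof of Theorem \ref{HGAGA}, your outline coincides with the paper's proof.
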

\begin{proof}
In view of the previous theorem, to prove that $(\U_A,\U_A)\subseteq H$  it suffices to show that $e^{i[c^*,c]}\in H$
for all $c\in A$ (or even in $c\in N_2$, but we will not need this).

Let $\Lambda=\{uau^*-a\mid u\in H,a\in  A_{\sa}\}$. We claim that $\Id(\pi_3(\Lambda))=A$.
To prove this, notice first that $\Lin(\Lambda)=[H,A]$. Since $\overline{\Lin(H)}$ is a closed subspace invariant under conjugation by $\U_A$, it is a Lie ideal (by \cite[Theorem 2.3]{marcoux-murphy}). Hence, by \cite[Theorem 5.27]{BKS} applied to $\overline{\Lin(H)}$, 
\[
\overline{[H,A]}=\overline{[\Id([H,A]),A]}=\overline{[[A,A],A]}=\overline{[A,A]}.
\]
So
$\overline{\Lin(\Lambda)}=\overline{[A,A]}$. From this, and continuing to argue as in the proof of Theorem \ref{HGAGA}, we conclude that $\Id(\pi_3(\Lambda))=A$, as claimed. 

Continuing with the argument of Theorem \ref{HGAGA}, we get that 
there exist $\overline x_1,\dots,\overline x_m\in \Lambda^8$ and $y_1,z_1,\dots,y_m,z_m\in A$ such that
\[
1=\sum_{i=1}^m y_i\pi_{3}(\overline x_i)z_i.
\]
Say  
\[
\overline x_{i}=(u_{i,1}a_{i,1}u_{i,1}^*-a_{i,1},\dots, u_{i,1}a_{i,8}u_{i,8}^*-a_{i,8}),
\] 
for all $i=1,\dots,m$, where $u_{i,1},\dots,u_{i,8}\in H$ and $a_{i,1},\dots,a_{i,8}\in  A_{\sa}$.

As in the proof of Theorem \ref{HGAGA}, 
let us define 
$W_t(u,a)=\frac{1}{it}\log((u,e^{ita}))$
for  $u\in H$, $a\in A_{\sa}$ and $t\in \R$ small enough and non-zero. Set $W_0(u,a)=uau^{*}-a$
so that $W_t(u,a)\to W_0(u,a)$ as $t\to 0$. 
Let
\[
\overline x_{i}(t)=(W_t(u_{i,1},a_{i,1}),\dots,W_t(u_{i,8},a_{i,8}))
\] 
and
\[
y(t)=\sum_{i=1}^m y_i\pi_{3}(\overline x_{i}(t))z_i.
\]
Then $y(t)\to 1$ as $t\to 0$. In particular, there exists $\delta>0$ such that 
$y(t)$ is invertible for $|t|\leq \delta$. We thus have that
\begin{equation*}
1=\sum_{i=1}^m y(t)^{-1}y_i\pi_{3}(\overline x_i(t))z_i,
\end{equation*}
for all $|t|\leq \delta$.

Let  $c\in A$. Then 
\begin{equation}\label{1sumunitary}
[c^*,c]=\sum_{i=1}^m [y_i(t)\pi_{3}(\overline x_i(t))z_i,c]
\end{equation}
for all $|t|\leq \delta$, where we have set $y_i(t)=c^*y(t)^{-1}y_i$.
By Lemma \ref{lieidentities}, each $[y_i(t)\pi_{3}(\overline x_i(t))z_i,c]$ on the right 
side is expressible as a
sum whose terms have either the form $[W_t(u,a),[r,s]]$ or the form
$[[W_t(u,a),[r,s]],[r',s']]$:
\begin{equation}\label{15sumunitary}
[c^*,c]=\sum_{i,l}\sum_{r,s} [W_t(u_{i,l},a_{i,l}),[r,s]]+\sum_{i,l}\sum_{r,s,r',s'} [[W_t(u_{i,l},a_{i,l}),[r,s]],[r',s']] 
\end{equation}
Here $i=1,\dots,m$, $l=1,\dots,8$, and for each $i,l$ the inner sums run through  a number of polynomials $r,s,r',s'$ on
$y_i(t), W_t(u_{i,l},a_{i,l}),z_i,d$. Notice that in particular these polynomials depend continuously on $t$. 
At this point  our arguments diverge from the proof of Theorem \ref{HGAGA}.
We do not wish to express $[W_t(u,a),[r,s]]$ 
as a sum of similarity conjugates of  $W_t(u,a)$, but rather, unitary conjugates. 
First, taking selfadjoint part in \eqref{15sumunitary}, and keeping in mind that $W_t(u,a)$ is selfadjoint,
we can replace $[r,s]$ in the terms of the form $[W_t(h,a),[r,s]]$ by its skewadjoint part. 
Say $r=r_1+ir_2$ and $s=s_1+is_2$, with $r_1,r_2,s_1,s_2\in A_{\sa}$. Then the skewadjoint
part of $[r,s]$ can be computed to equal
\[
\frac{i}{2}[(r_1+is_1)^*,r_1+is_1]+\frac{i}{2}[(r_2+is_2)^*,r_2+is_2]
\]
(see \cite[Theorem 2.4]{marcoux06}). Thus, the selfadjoint parts of the terms $[W_t(u,a),[r,s]]$ can be 
expanded into  sums of two terms of the form $[W_t(u,a),i[z^*,z]]$. 
Similarly, the  selfadjoint parts of elements of the form $[[W_t(u,a),[r,s],[r',s']]]$ can be expanded into  sums of four
elements of the form $[[W_t(u,a),i[z^*,z]],i[(z')^*,z']]$.
The elements $z$ and $z'$ depend continuously on $t$.
This implies that $\|z\|$ and $\|z'\|$ are uniformly bounded  for all $|t|\leq \delta$. Thus, by enlarging the number of terms
of the forms $[W_t(u,a),i[z^*,z]]$ and $[[W_t(u,a),i[z^*,z]],i[(z')^*,z']]$ if necessary we may assume that $\|z\|\leq 1$
and $\|z'\|\leq 1$. 
Then, by Theorem \ref{commP},  $\frac 1 2[z^*,z]$ is expressible as a sum of elements of the form $\pm p$,
where $p$ is a projection. Put differently, $[z^*,z]$ is a sum of elements of the form 
$(\pm 2)p$. So we get that
\begin{equation}\label{unasuma}
[W_t(u,a),i[z^*,z]]=\sum_{k=1}^K [W_t(h,a),2i\epsilon_kp_k],
\end{equation}
for some projections $p_1,\dots,p_K$ and $\epsilon_k=\pm 1 $ for all $k$.
Now we use that for any projection $p$ we have
\begin{equation}\label{unip}
[W_t(u,a),2ip]=v_pW_t(u,a)v_p^*-v_p^*W_t(u,a)v_p,
\end{equation}
where 
$v_p=p+i(1-p)$ is a unitary. Since  
 $vW(u,a)v^*=W(vuv^*,vav^*)$, we obtain  
 $[W_t(u,a),i[z^*,z]]$ expressed as a sum of elements of the form $W_t(u',a')$ where
 $u'$ and $a'$ are unitary conjugates of $u$ and $a$ (hence $u'\in H$).
The same argument applied twice
also implies that elements  of the form 
$[[W_t(u,a),i[z^*,z]],i[(z')^*,z']]$ are expressible as sums whose  terms have the form
$W_t(u'',a'')$, where $u''\in H$ and $a''\in A_{\sa}$ are unitary conjugates of $u$ and $a$. In summary, we have shown that
\begin{equation}\label{2sumunitary}
[c^*,c]=\sum_{j=1}^M W_t(u_j',a_j'),
\end{equation}
for some $u_1',\dots,u_M'\in H$ and $a_1',\dots,a_M'\in A_{\sa}$ which depend on $t$. We claim however that $M$ is independent of $t$ and that the norms of the elements $u_j'$ and $a_j'$ are uniformly bounded for all $|t|\leq \delta$. The number of terms in \eqref{2sumunitary} resulted as follows: starting from \eqref{1sumunitary} we expanded expanding each of its terms  using Lemma \ref{lieidentities}. The new terms were expanded further so as to have 
 terms of the forms $[W_t(u,a),i[z^*,z]]$ and $[[W_t(u,a),i[z^*,z]],i[(z')^*,z']]$. At this point we observed that the norms of $z$ and $z'$ were bounded by a constant independent of $t$. So these terms could be expanded further
 so as to have $\|z\|\leq 1$ and $\|z'\|\leq 1$. We 
then used Theorem \ref{commP} to get \eqref{unasuma}  and finally  \eqref{unip}. This makes it clear that $M$ is independent from $t$.
The elements $u_i'$ and $a_i'$ apperaing in \eqref{2sumunitary} are all unitary conjugates of the initial set of elements.
Hence their norms are independent of  $t$.

Let's multiply by $it$ on both sides of \eqref{2sumunitary} and then exponentiate:
\begin{equation}\label{almostdoneunitary}
e^{it[c^*,c]}=e^{\sum_{j=1}^M itW_t(u_j',a_j')}.
\end{equation}
Since the norms of $W_t(u_j',a_j')$ are bounded on $|t|\leq \delta$, for small enough $t$  Theorem \ref{KVhard} is applicable on the right side. We thus get that
\begin{align}\label{almostalmostdoneunitary}\nonumber
e^{\sum_{j=1}^M itW_t(u_j',a_j')} &=(e^R e^{itW_t(u_1',a_1')} e^{-R})
\cdot (e^S e^{\sum_{j=2}^M itW_t(u_j',a_j')}e^{-S})\\
&=(e^R(u_1',e^{ita_1'})e^{-R})\cdot (e^S e^{\sum_{j=2}^M itW_t(u_j',a_j')}e^{-S}).
\end{align}
Since  we have assumed that $H$ is normal,
the first factor on the right side of \eqref{almostalmostdoneunitary} is in $H$. Continuing the same argument inductively 
we conclude that the right side of \eqref{almostdoneunitary} belongs to $H$ for small enough $t$. Hence,  $e^{t[c^*,c]}$ belongs to $H$ for  small enough $t$ which in turn implies that $e^{[c^*,c]}$ belongs to $H$ for all $c\in A$, as desired.
\end{proof}

\begin{corollary}
Let $A$ be a simple unital $\Cstar$-algebra contaning a non-trivial projection. Let $H$ be a normal subgroup of $\U_A$.
Then either $H$ is contained in the center of $\U_A$ or $H$ contains $(\U_A,\U_A)$.
\end{corollary}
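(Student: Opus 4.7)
The plan is to deduce this corollary directly from Theorem \ref{HUAUA} together with the simplicity of $A$. The main observation is that a non-trivial projection $p$ in a simple unital $\Cstar$-algebra gives us two full orthogonal projections, namely $p$ and $1-p$: each is non-zero, and by simplicity every non-zero element generates $A$ as a closed two-sided ideal. Hence the hypothesis of Theorem \ref{HUAUA} on $A$ is met.

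Let $H$ be a normal subgroup of $\U_A$ and assume that $H$ is not contained in the center of $\U_A$. I would then argue that $[H,\U_A]\neq 0$. Indeed, suppose that $[u,v]=0$ for every $u\in H$ and every $v\in\U_A$. Since $\U_A$ contains $e^{ith}$ for every $h\in A_\sa$ and every $t\in\R$, differentiating the identity $ue^{ith}u^*e^{-ith}=1$ at $t=0$ gives $[u,h]=0$ for all $h\in A_\sa$, so $u$ lies in the center of $A^\sim$ and in particular in the center of $\U_A$; as this holds for every $u\in H$, $H$ would be central, contrary to assumption.

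Hence $[H,\U_A]\neq 0$ and the closed two-sided ideal $\mathrm{Id}([H,\U_A])$ is non-zero. By the simplicity of $A$ it equals $A$. Theorem \ref{HUAUA} now applies and yields $(\U_A,\U_A)\subseteq H$, completing the dichotomy. The only point that requires any actual work is the short differentiation argument showing that a non-central normal subgroup has non-trivial commutators with $\U_A$, and this is essentially automatic once one remembers that $\U_A$ is generated by exponentials of selfadjoint elements.
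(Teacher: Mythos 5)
Your proposal is correct and follows the same route the paper intends (mirroring the proof of the corollary to Theorem \ref{HGAGA}): simplicity makes $p$ and $1-p$ full, non-centrality gives $[H,\U_A]\neq 0$ and hence $\Id([H,\U_A])=A$, and Theorem \ref{HUAUA} then yields $(\U_A,\U_A)\subseteq H$. The only remark is that your differentiation step is unnecessary: an element of $\U_A$ commuting with every element of $\U_A$ is by definition in the group center, and conversely a central $H$ has $[H,\U_A]=0$ since the unitaries span the unital algebra.
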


\section{Appendix}
Here we prove Theorems \ref{KVeasy} and \ref{KVhard}.

\subsection{Proof of Theorem \ref{KVeasy}}

In order to justify analytically some formal manipulations we sometimes work in the setting of 
a free Banach algebra. 
Let $E=\C\oplus \C$. Let us denote by $\X$ and $\Y$ the canonical basis of $E$.
Let $\A_n(\X,\Y)=E^{\otimes n}$ for $n\geq 1$. 
Let 
\begin{align*}
\A[\X,\Y] &=\bigoplus_{n=1}^\infty \A_n(\X,\Y),\\
\A[[\X,\Y]] &=\prod_{n=1}^\infty \A_n(\X,\Y),
\end{align*}
$\A[\X,\Y]$ is the free non-unital algebra in the variables $\X$ and $\Y$. We regard the elements of $\A[\X,\Y]$ as polynomials in the noncommuting variables $\X$ and $\Y$.
We regard the elements of $\A[[\X,\Y]]$ as formal power series in $\X$ and $\Y$.

Let us endow $E$ with the $\ell_1$ norm and then $\A_n(\X,\Y)=E^{\otimes n}$ with the projective tensor product norm (i.e., the $\ell_1$ norm
after identifying $E^{\otimes n}$ with $\C^{2^n}$).
Let $\A(\X,\Y)$ denote the Banach algebra of $\ell_1$ convergent series  in $\A[[\X,\Y]]$ endowed with the $\ell_1$ norm. We have the inclusions $\A[\X,\Y]\subseteq \A(\X,\Y)\subseteq \A[[\X,\Y]]$.   The Banach algebra $\A(\X,\Y)$ is the free Banach  algebra on the vector space $E$ (\cite{pestov}). 
Given $Z\in \A(\X,\Y)$ and elements in  $U,V$ in some Banach algebra $A$ and  of norm at most 1 we can evaluate $Z(U,V)$ via the universal property of $\A(\X,\Y)$, i.e., by extending the assignment $\X\mapsto U$, $\Y\mapsto V$ to a contractive algebra homomorphism.
The function $(U,V)\mapsto Z(U,V)$ is defined for $\|U\|\leq 1$ and  $\|V\|\leq 1$, and is given by a normally convergent power series in $U,V$.
In particular it is continuous and has Frechet derivatives of all orders for $\|U\|<1$ and $\|V\|<1$.

Let $t\in \R$.
 Define $\lambda_t\colon E\to E$ by $\lambda_t(v)=tv$ for all $v\in E$ and extend it to 
an algebra homomorphism on $\A[\X,\Y]$. Since $\lambda_t$ maps $\A_n[\X,\Y]$ to itself we can further extend it to $\A[[\X,\Y]]$. We call $\lambda_t$ the scaling automorphism. Notice that $\lambda_t$ maps the Banach algebra $\A(\X,\Y)$ to itself for $|t|\leq 1$.

Let $\Li_n(\X,\Y)\subseteq \A_n(\X,\Y)$ denote the span of the $n$-iterated commutators 
in $\X$ and $\Y$, i.e., $[v_1,[v_2,\cdots,[v_{n-1},v_n]\cdots ]]$ with $v_i\in E$.
Define also
\begin{align*}
\Li[X,Y] &=\bigoplus_{n=1}^\infty \Li_n(\X,Y),\\
\Li[[X,Y]]&=\prod_{n=1}^\infty\Li_n(\X,\Y).
\end{align*}
$\Li[X,Y]$ is  the free Lie algebra on $\X,\Y$.  
Finally, let $\Li(\X,\Y)$ denote  the closure of 
$\Li[X,Y]$ inside the Banach algebra $\A(\X,\Y)$.

Let $\nu_n\colon \A_n(\X,\Y)\to \Li_n(\X,\Y)$ denote the linear operator 
\[
\nu(v_{1}\cdots v_{n})=
\frac 1 n [v_1,[v_2,\cdots,[v_{n-1},v_n]\cdots]]
\] 
for $v_1,\dots,v_n\in E$. The Dynkin-Specht-Wever theorem asserts that  $\nu_n$ is the identity on $\Li_n(X,Y)$.
We can estimate that $\|\nu_n\|\leq 2^n$. From this we deduce that $\lambda_{\frac 1 2}\nu_n$ is a contractive map. 
 Define
$\nu\colon \A[\X,\Y]\to \A[\X,\Y]$ by $\nu=\sum_{n=1}^\infty \nu_n$. Observe  then $\lambda_{\frac 1 2}\nu$ extends to
a contractive map from $\A(\X,\Y)$ to $\Li(\X,\Y)$.

\begin{lemma}\label{half}
Let $Z=\sum_{n=1}^\infty Z_n\in \A(\X,\Y)$
be such that $Z_n\in \Li_n(\X,\Y)$ for all $n$ (i.e., $Z\in \Li[[\X,\Y]]$). 
Then $\lambda_{\frac 1 2}Z\in \Li(\X,\Y)$.
\end{lemma}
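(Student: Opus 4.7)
The plan is to exploit Dynkin-Specht-Wever to rewrite $\lambda_{1/2} Z$ as $(\lambda_{1/2}\nu)(Z)$, and then invoke the already-established fact that $\lambda_{1/2}\nu$ is a well-defined continuous map from $\A(\X,\Y)$ into the closed Lie subalgebra $\Li(\X,\Y)$. In slogan form: the identity map on each graded piece $\Li_n(\X,\Y)$ coincides with $\nu_n$ by Dynkin-Specht-Wever, and while $\nu_n$ has bad norm growth $2^n$, the scaling $\lambda_{1/2}$ contributes a factor $2^{-n}$ on $\A_n(\X,\Y)$ that exactly tames it.

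First I would verify the pointwise identity on homogeneous pieces. For each $n\geq 1$, the assumption $Z_n\in \Li_n(\X,\Y)$ together with the Dynkin-Specht-Wever theorem (stated in the paragraph preceding the lemma) gives $\nu_n(Z_n) = Z_n$, and hence $\lambda_{1/2}\nu_n(Z_n) = \lambda_{1/2}(Z_n)$.

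Next I would upgrade this to the infinite series $Z = \sum_{n\geq 1} Z_n$. Because $\lambda_{1/2}$ is a contraction on $\A(\X,\Y)$ (it acts as multiplication by $2^{-n}$ on the $n$-th graded piece $\A_n(\X,\Y)$) and because $\lambda_{1/2}\nu$ extends from $\A[\X,\Y]$ to a contractive map $\A(\X,\Y) \to \Li(\X,\Y)$, both sides of the identity I want depend continuously on $Z$. Since the partial sums $\sum_{n=1}^N Z_n$ converge to $Z$ in $\A(\X,\Y)$, applying the continuous map $\lambda_{1/2}\nu$ and using the pointwise identity above yields
\[
(\lambda_{1/2}\nu)(Z) \;=\; \lim_{N\to\infty}\sum_{n=1}^N \lambda_{1/2}\nu_n(Z_n) \;=\; \lim_{N\to\infty}\sum_{n=1}^N \lambda_{1/2}(Z_n) \;=\; \lambda_{1/2}(Z).
\]
Since the left-hand side lies in $\Li(\X,\Y)$ by the defining property of the extended map $\lambda_{1/2}\nu$, so does $\lambda_{1/2}(Z)$, as required.

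There is no real obstacle here; the only point requiring attention is justifying the interchange of $\nu$ with the infinite sum, which is handled by the continuity of $\lambda_{1/2}\nu$ on $\A(\X,\Y)$. The lemma is essentially a reformulation of the fact that, after applying the scaling $\lambda_{1/2}$, the Dynkin-Specht-Wever projector from $\A(\X,\Y)$ onto $\Li(\X,\Y)$ is the identity on elements whose homogeneous components are already Lie.
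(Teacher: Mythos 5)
Your proposal is correct and follows essentially the same route as the paper: both apply the Dynkin--Specht--Wever identity $\nu_n Z_n = Z_n$ on each homogeneous component and then use the contractivity of $\lambda_{1/2}\nu$ (equivalently, of the maps $\lambda_{1/2}\nu_n$) to sum the series inside $\Li(\X,\Y)$. Your version merely spells out the continuity/partial-sum argument that the paper leaves implicit.
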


\begin{proof}
We have $\nu_n Z_n=Z_n$ for all $n$. So
$\lambda_{\frac 1 2}Z=\sum_{n=1}^\infty (\lambda_{\frac 1 2}\nu_n)Z_n\in \Li(\X,\Y)$
\end{proof}

\begin{lemma}\label{halfhalf}
Let $Z\in \Li[[\X,\Y]]$. Then there exist $P,Q\in \Li[[X,Y]]$ such that
\[
Z=Z_1+[\X,P]+[\Y,Q].
\] 
If $Z\in \Li(\X,\Y)$ then $\lambda_{\frac 1 2}P$ and $\lambda_{\frac 1 2}Q$ are in $\Li(X,Y)$.
\end{lemma}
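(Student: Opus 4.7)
The plan is to construct $P$ and $Q$ homogeneous-degree by homogeneous-degree from the expansion $Z = \sum_{n \geq 1} Z_n$ with $Z_n \in \Li_n(\X,\Y)$, using the Dynkin--Specht--Wever identity $\nu_n|_{\Li_n(\X,\Y)} = \mathrm{id}$ as the main lever. The key observation is the following: fix $n \geq 2$ and split $\A_n(\X,\Y) = \X\A_{n-1}(\X,\Y) \oplus \Y\A_{n-1}(\X,\Y)$ according to the first letter, writing $Z_n = \X U_n + \Y V_n$ uniquely with $U_n, V_n \in \A_{n-1}(\X,\Y)$. Define $\mu \colon \A_{n-1}(\X,\Y) \to \Li_{n-1}(\X,\Y)$ on basis monomials by $\mu(v_2 \cdots v_n) := [v_2,[v_3,\ldots,[v_{n-1},v_n]\cdots]]$. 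By inspection on monomials, $\nu_n(\X w) = \tfrac{1}{n}[\X, \mu w]$ for all $w \in \A_{n-1}(\X,\Y)$, and similarly for $\Y$. Applying $\nu_n$ to $Z_n = \nu_n Z_n$ therefore yields
\[
Z_n \;=\; \tfrac{1}{n}[\X, \mu U_n] + \tfrac{1}{n}[\Y, \mu V_n].
\]
Setting $P_n := \tfrac{1}{n}\mu U_n$ and $Q_n := \tfrac{1}{n}\mu V_n$, both in $\Li_{n-1}(\X,\Y)$, and letting $P := \sum_{n \geq 2} P_n$, $Q := \sum_{n \geq 2} Q_n$ as elements of $\Li[[\X,\Y]]$, delivers the decomposition $Z = Z_1 + [\X, P] + [\Y, Q]$.

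For the quantitative claim, assume $Z \in \Li(\X,\Y)$, i.e.\ $\sum_n \|Z_n\|_{\A_n} < \infty$. A direct count of the $\pm 1$ expansion of $[v_2,[v_3,\ldots,[v_{n-1},v_n]\cdots]]$ gives $\|\mu\|_{\A_{n-1}\to\A_{n-1}} \leq 2^{n-2}$, and the projection $\A_n \to \A_{n-1}$, $\X U_n + \Y V_n \mapsto U_n$, is contractive under the $\ell^1$ norm on monomials, so $\|U_n\|,\|V_n\| \leq \|Z_n\|$ and hence $\|P_n\|,\|Q_n\| \leq \frac{2^{n-2}}{n}\|Z_n\|$. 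Consequently
\[
\|\lambda_{1/2} P\|_{\A(\X,\Y)} \;\leq\; \sum_{n \geq 2} \tfrac{1}{2^{n-1}}\|P_n\| \;\leq\; \sum_{n \geq 2} \tfrac{1}{2n}\|Z_n\| \;<\; \infty,
\]
and the same estimate handles $\lambda_{1/2} Q$. Since each $\tfrac{1}{2^{n-1}}P_n$ already lies in $\Li_{n-1}(\X,\Y)$, the partial sums of $\lambda_{1/2} P$ sit in $\Li[\X,\Y]$ and converge in $\A(\X,\Y)$-norm, placing $\lambda_{1/2} P$ in $\Li(\X,\Y)$; likewise for $\lambda_{1/2} Q$.

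The only delicate point is tracking the $1/n$ factor in $\nu_n(\X w) = \tfrac{1}{n}[\X, \mu w]$ (immediate on monomials, extended by linearity): this factor is precisely what allows the exponential growth $\|\mu\| \lesssim 2^{n-2}$ to be absorbed by the rescaling $\lambda_{1/2}$, mirroring the role of Lemma \ref{half} in the absolute statement. Everything else is a routine manipulation in the free Banach algebra $\A(\X,\Y)$.
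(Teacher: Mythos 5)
Your proof is correct and follows essentially the same route as the paper's: decompose each homogeneous component by its leading letter, use the Dynkin--Specht--Wever projection to rewrite $Z_n=[\X,P_n]+[\Y,Q_n]$ with $P_n,Q_n\in\Li_{n-1}(\X,\Y)$, and absorb the exponential growth of the bracketing operator via the rescaling $\lambda_{\frac 1 2}$. Your write-up is in fact a bit more careful than the paper's (you track the $\tfrac1n$ normalization explicitly, where the paper's ``$\nu_nP_n$'' is a mild abuse of notation), but the argument is the same.
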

\begin{proof}
Let $Z=\sum_{n=1}^\infty Z_n$ with $Z_n\in \Li_n(X,Y)$.
For each $n\geq 2$ we can write  $Z_n=\X P_{n}+\Y Q_{n}$, for some $P_n,Q_n\in \A_{n-1}(\X,\Y)$. We remark that, from the definition of the norm  on $\A_n(\X,\Y)$,
we have that $\|P_n\|\leq \|Z_{n-1}\|$ and $\|Q_n\|\leq \|Z_{n-1}\|$. Applying $\nu_n$ on both sides of $Z_n=\X P_{n}+\Y Q_{n}$ we get
$Z_n=[\X,\nu_n P_n]+[\Y,\nu_n Q_n]$, where now $\nu_nP_n$ and $\nu_nQ_n$ are in $\Li_{n-1}(\X,\Y)$. Let us set
$P=\sum_{n=2}^\infty \nu_n P_n$ and $Q=\sum_{n=2}^\infty\nu_n Q_n$.
Then $Z=Z_1+[\X,P]+[\Y,Q]$, as desired.
Since  $\lambda_{\frac 1 2}\nu_n$ is a contraction,  
$\|\lambda_{\frac 1 2} \nu_n P_n\|\leq \|Z_{n-1}\|$ and $\|\lambda_{\frac 1 2}\nu_n Q_n\|\leq \|Z_{n-1}\|$ for all $n\geq 2$. Hence, if $\sum_{n=1}^\infty \|Z_n\|<\infty$  then  $\lambda_{\frac 1 2}P,\lambda_{\frac 1 2}Q\in \Li(\X,\Y)$.
\end{proof}

Let $A$ be a Banach algebra and $X,Y\in A$. Define by holomorphic functional calculus
\[
V(X,Y)=\log(e^Xe^Y).
\]
$V(X,Y)$ is well defined for $\|X\|+\|Y\|<\log 2$.

\begin{theorem}\label{KVeasyApp}
Let $A$ be a Banach algebra. Let $0<\delta<\frac{\log 2}{8}$. There exist continuous functions $(X,Y)\mapsto A(X,Y)$ and $(X,Y)\mapsto B(X,Y)$ defined for $\|X\|,\|Y\|\leq \delta$ such that
\begin{equation}\label{Vanal}
V(X,Y)=X+Y+[X,A(X,Y)]+[Y,B(X,Y)]
\end{equation}
for all  $X,Y$. If $A$ is a $\Cstar$-algebra and $X$ and $Y$ are skewadjoint then so are 
$A(X,Y)$ and $B(X,Y)$
\end{theorem}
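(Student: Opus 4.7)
My plan is to derive the identity in the free Banach algebra $\A(\X, \Y)$ and transfer it to $A$ via the evaluation homomorphism. By Dynkin's form of the Campbell-Baker-Hausdorff theorem, $V(\X, \Y) := \log(e^\X e^\Y)$ is a formal Lie series in $\Li[[\X, \Y]]$ with degree-one component $\X + \Y$. Fix $t \in (0, (\log 2)/2)$. Then $\lambda_t V = V(t\X, t\Y) = \log(e^{t\X} e^{t\Y})$ is well-defined in $\A(\X, \Y)$ by holomorphic functional calculus (since $\|e^{t\X} e^{t\Y} - 1\| \leq e^{2t} - 1 < 1$), and it coincides with the formal Lie expansion; thus $\lambda_t V \in \Li(\X, \Y)$.

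I then apply Lemma \ref{halfhalf} to $\lambda_t V$, obtaining formal Lie series $P_t, Q_t \in \Li[[\X, \Y]]$ with
\[
\lambda_t V = t\X + t\Y + [\X, P_t] + [\Y, Q_t].
\]
The construction in that lemma presents $P_t$ and $Q_t$ as (possibly formal) sums of nested commutators in $\X, \Y$, and its final clause guarantees $\lambda_{1/2} P_t, \lambda_{1/2} Q_t \in \Li(\X, \Y)$. Consequently, for $\|U\|, \|V\| \leq 1/2$ the evaluations $P_t(U, V), Q_t(U, V)$ converge absolutely and depend continuously on $(U, V)$, and the free-algebra identity specializes to
\[
V(tU, tV) = tU + tV + [U, P_t(U, V)] + [V, Q_t(U, V)]
\]
in $A$.

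Finally, given $X, Y \in A$ with $\|X\|, \|Y\| \leq \delta$, I choose $t = 2\delta$; since $\delta < (\log 2)/8$, this satisfies $t < (\log 2)/4 < (\log 2)/2$, and $\|X/t\|, \|Y/t\| \leq 1/2$. Defining
\[
A(X, Y) := t^{-1} P_t(X/t, Y/t), \qquad B(X, Y) := t^{-1} Q_t(X/t, Y/t),
\]
specializing the preceding identity at $(X/t, Y/t)$ and clearing the factor of $t$ yields $V(X, Y) = X + Y + [X, A(X, Y)] + [Y, B(X, Y)]$, with $A, B$ continuous by absolute convergence. For the skewadjoint statement, $P_t$ and $Q_t$ are absolute sums of nested commutators in $\X, \Y$, and $[U, V]^* = -[U, V]$ whenever $U^* = -U$ and $V^* = -V$, so nested commutators of skewadjoints are skewadjoint; evaluation at the skewadjoint elements $X/t, Y/t$ followed by multiplication by the real scalar $t^{-1}$ therefore produces skewadjoint outputs. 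The only real technical step is the scaling bookkeeping, and the stated bound $\delta < (\log 2)/8$ is conservative enough to absorb both the functional-calculus radius $\log 2$ and the factor of two built into the $\lambda_{1/2}$ contraction used in Lemma \ref{halfhalf}.
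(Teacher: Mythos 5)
Your proposal is correct and follows essentially the same route as the paper's proof: expand $\log(e^{\X}e^{\Y})$ in the free Banach algebra $\A(\X,\Y)$ via Campbell--Baker--Hausdorff, use Lemma \ref{halfhalf} to write the Lie series as its degree-one part plus $[\X,P]+[\Y,Q]$, then rescale and evaluate by the contractive homomorphism, with skewadjointness coming from the fact that the outputs are convergent series of iterated brackets. The only point where the paper is slightly more careful is your bare assertion that $\lambda_t V\in \Li(\X,\Y)$ (the paper extracts this from Lemma \ref{half}, paying one extra halving of the radius, whence its $4\delta\to 2\delta\to\delta$ cascade), but your claim is also justified, since the partial sums of the absolutely convergent series $\lambda_t V$, whose homogeneous components are Lie elements, are Lie polynomials converging in the $\ell_1$ norm.
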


\begin{proof}
We first work in the Banach algebra $\A(\X,\Y)$. Since $\|4\delta \X\|+\|4\delta Y\|<\log 2$
we have 
$V(4\delta \X, 4\delta \Y)\in \A(\X,\Y)$. By the Campbell-Baker-Hausdorff Theorem, 
$V(4\delta \X, 4\delta \Y)\in \Li[[X,Y]]$. Thus, by Lemma  \ref{half}, $V(2\delta\X,2\delta\Y)\in \Li(\X,\Y)$.
Then, by Lemma \ref{halfhalf}, there exist  $P,Q\in \Li(\X,\Y)$ such that
\begin{equation}\label{Vformal}
V(\delta \X,\delta \Y)=\delta \X+\delta\Y+[\X,P]+[\Y,Q].
\end{equation}

Now let $A$ be a Banach algebra. For $X,Y\in A$ such that $\|X\|,\|Y\|\leq \delta $
define $A(X,Y)=P(\frac 1 \delta X,\frac 1 \delta Y)$ and $B(X,Y)=Q(\frac 1 \delta X,\frac 1 \delta Y)$. Applying the assignment  $\delta\X\mapsto X$, $\delta Y\mapsto Y$  in \eqref{Vformal} we get \eqref{Vanal}. The functions $(X,Y)\mapsto A(X,Y)$ and 
$(X,Y)\mapsto B(X,Y)$ are given by normally convergent series of iterated Lie brackets. In particular they are continuous.

Suppose now that $A$ is a $\Cstar$-algebra and that $X,Y$  are skewadjoint elements of $A$.
Observe that the involution $\sigma Z=-Z^*$ is a continuous Lie algebra homomorphism; i.e., $\sigma [Z_1,Z_2] =[\sigma Z_1,\sigma Z_2]$. Since $A(X,Y)$ is a convergent series of Lie brackets on $X,Y$ we get that $\sigma A(X,Y)=A(\sigma X,\sigma Y)=A(X,Y)$, i.e., $A(X,Y)$ is skewadjoint.  The same argument applies to $B(X,Y)$.
\end{proof}

\subsection{Proof of Theorem \ref{KVhard}}
Theorem \ref{KVhard} follows essentially from the collection formulas  related to the Kashiwara-Vergne equations, as developed in \cite[Sections 1.1--1.5]{rouviere}. We have pieced together the various parts of the argument from this reference (still  referring the reader  to \cite{rouviere} for some computations). Besides having tailored the statement of Theorem \ref{KVhard} to our purposes, our objective here has been to make it explicit  that
these formulas  are valid in the infinite dimensional setting of a Banach algebra (the setting in \cite{rouviere}  is either  formal or finite dimensional one).

Let $A$ be a Banach algebra. Let $X,Y\in A$ and $0<\delta<\frac{\log 2}{8}$. We have demonstrated in Theorem \ref{KVeasyApp} the existence of $A(X,Y)$
and $B(X,Y)$ for $\|X\|,\|Y\|\leq \delta$ such that
\begin{equation}\label{VAB}
V(X,Y)=X+Y+[X,A(X,Y)]+[Y,B(X,Y)].
\end{equation}
Moreover, $A(X,Y)$ and $B(X,Y)$ are expressible as normally convergent series of Lie brackets in $X$
and $Y$.

Let $\mathrm{ad}_X\colon A\to A$ denote the map $\mathrm{ad}_X(Z)=[X,Z]$ for all $Z\in A$.
Let $x=\mathrm{ad}_X$ and $y=\mathrm{ad}_Y$.   Define
\[
F(X,Y)=\frac{x}{e^{-x}-1}B(Y,X),\quad G(X,Y)=\frac{y}{e^y-1}A(Y,X).
\] 
(Our notation matches that of \cite[Sections 1.1--1.5]{rouviere}, with the following exception:  our $A(X,Y)$ and $B(X,Y)$  are $B(Y,X)$ and $A(Y,X)$ respectively,  in \cite[Section 1.5.2]{rouviere}.) From \eqref{VAB} we deduce that
\begin{equation}\label{KV1}
V(Y,X)=X+Y-(1-e^x)F(X,Y)-(e^y-1)G(X,Y).
\end{equation}
This is the first Kashiwara-Vergne equation.  
Since $\frac{e^{-x}-1}{x}$ and $\frac{e^y-1}{y}$ are  invertible  operators for $\|x\|,\|y\|<2\pi$ (hence, for $\|X\|,\|Y\| <\pi$), $F(X,Y)$ and $G(X,Y)$
are defined for $\|X\|,\|Y\|\leq \delta $ and are normally convergent series of Lie brackets on this domain. (More formally, we can first define $F(\delta \X,\delta \Y)$ and $G(\delta\X,\delta \Y)$
in the Banach-Lie algebra $\Li(\X,\Y)$ and then evaluate them at $X,Y\in A$ such that 
$\|X\|,\|Y\|\leq \delta$ via the assignments $\delta \X\mapsto X$ and $\delta \Y\mapsto Y$.)

Let us define  $V_t(X,Y)=\frac{1}{t}V(tX,tY)$ 
for all $X,Y\in A$ and $t\in \R$ non-zero and small enough (depending on $X$ and $Y$).  
Define $V_0(X,Y)=X+Y$. Let 
$v(t)=\mathrm{ad}_{V_t(X,Y)}=\log(e^{tx}e^{ty})$, which is defined also for small enough $t$. (We sometimes omit reference to $X$ and $Y$
to simplify notation.) Let us also define  
 $F_t(X,Y)=t^{-1}F(tX,tY)$ and $G_t(X,Y)=t^{-1}G(tX,tY)$.  Then \eqref{KV1} implies that
\begin{equation}\label{KV1t}
V_t(Y,X)=X+Y-(1-e^{tx})F_t(X,Y)-(e^{ty}-1)G_t(X,Y),
\end{equation}
for small enough $t$.

Let us introduce the partial differential operators: The operators $\partial_X V_t(X,Y)$ and $\partial_Y V_t(X,Y)$ act  on $Z\in A$
as follows:
\[
\partial_X V_t(X,Y):=\frac{d}{ds}V_t(X+sZ,Y)|_{s=0},\quad 
\partial_Y V_t(X,Y):=\frac{d}{ds}V_t(X,Y+sZ)|_{s=0}.
\]
The following formulas are given in \cite[Lemma 1.2]{rouviere} (where they are deduced from the formula for the  differential of the exponential map):
\begin{equation}\label{partialXY}
\begin{aligned}
\frac{e^{v(t)}-1}{v(t)}\partial_X V_t(X,Y) &=e^{-ty}\frac{1-e^{-tx}}{tx},\quad
\frac{e^{v(t)}-1}{v(t)}\partial_Y V_t(X,Y) &=\frac{1-e^{-ty}}{ty}.
\end{aligned}
\end{equation}

\begin{lemma}[Cf. \cite{rouviere}*{Proposition 1.3}]\label{dVtlemma}
We have
\begin{equation}\label{dVt}
\partial_t V_t(X,Y)=\partial_X V_t(X,Y)[X,F_t(X,Y)]+\partial_Y V_t(X,Y)[Y,G_t(X,Y)],
\end{equation}
for small enough $t$.
\end{lemma}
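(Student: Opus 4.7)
My plan is to differentiate the defining identity $e^{V(tX,tY)}=e^{tX}e^{tY}$ with respect to $t$ and match the result with the partial derivative formulas \eqref{partialXY} and the first Kashiwara--Vergne equation \eqref{KV1t}.

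First, applying the standard formula $\frac{d}{ds}e^{A(s)}=\frac{e^{\ad_{A(s)}}-1}{\ad_{A(s)}}A'(s)\cdot e^{A(s)}$ with $A(t)=V(tX,tY)=tV_t$ on the left, and the product rule together with the identity $e^{tX}Ye^{tY}=(e^{tx}Y)\cdot e^{V(tX,tY)}$ (which uses $Xe^{tX}=e^{tX}X$ and $e^{tX}Ye^{-tX}=e^{tx}Y$) on the right, then cancelling the common factor $e^{V(tX,tY)}$, I obtain
\[
\frac{e^{v(t)}-1}{v(t)}\bigl(V_t+t\,\partial_t V_t\bigr)=X+e^{tx}Y.
\]
Since $v(t)V_t=[V(tX,tY),V_t]=[tV_t,V_t]=0$, the operator $\frac{e^{v(t)}-1}{v(t)}$ acts as the identity on $V_t$, so the displayed equation rearranges to
\[
\frac{e^{v(t)}-1}{v(t)}\partial_t V_t=\frac{1}{t}\bigl(X+e^{tx}Y-V_t\bigr).
\]

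Next, I would apply $\frac{e^{v(t)}-1}{v(t)}$ to the right-hand side of the proposed identity. Using \eqref{partialXY} together with $\frac{1-e^{-tx}}{tx}\cdot xF_t=\frac{1-e^{-tx}}{t}F_t$ (and the analogous simplification for the $Y$ term), this yields
\[
\frac{e^{v(t)}-1}{v(t)}\bigl(\partial_X V_t[X,F_t]+\partial_Y V_t[Y,G_t]\bigr)=\frac{e^{-ty}(1-e^{-tx})}{t}F_t+\frac{1-e^{-ty}}{t}G_t.
\]
Comparing the two displayed expressions, the lemma reduces to the purely algebraic identity
\[
X+e^{tx}Y-V_t(X,Y)=e^{-ty}(1-e^{-tx})F_t+(1-e^{-ty})G_t,
\]
which is a rearrangement of the first Kashiwara--Vergne equation \eqref{KV1t} after using the conjugation symmetry $V_t(Y,X)=e^{-tx}V_t(X,Y)$ (a consequence of $e^{tY}e^{tX}=e^{-tX}(e^{tX}e^{tY})e^{tX}$) together with the vanishings $\ad_X X=0=\ad_Y Y$.

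Finally, since $v(t)\to 0$ as $t\to 0$, the operator $\frac{e^{v(t)}-1}{v(t)}$ has spectrum clustering near $1$ and is therefore invertible on $A$ for $t$ in a small neighbourhood of $0$; cancelling it from both sides produces the claim. I expect the main obstacle to be the algebraic step in the previous paragraph: \eqref{KV1t} naturally expresses $V_t(Y,X)$ rather than $V_t(X,Y)$, so closing the gap requires threading the conjugation symmetry together with the annihilations $xX=0$, $yY=0$ through the operator factors $e^{\pm tx}$ and $e^{\pm ty}$.
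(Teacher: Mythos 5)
Your overall plan coincides with the paper's (apply the smoothing operator from \eqref{partialXY} to both sides of \eqref{dVt}, reduce to the first Kashiwara--Vergne equation \eqref{KV1t}, then invert the operator for small $t$), but there is a genuine error in the execution: you mix the two versions of the derivative-of-the-exponential formula. Pulling the derivative out on the \emph{left}, as you do, gives $\frac{e^{v(t)}-1}{v(t)}\bigl(V_t+t\,\partial_t V_t\bigr)=X+e^{tx}Y$, which is correct; but the right-hand sides of \eqref{partialXY}, namely $e^{-ty}\frac{1-e^{-tx}}{tx}$ and $\frac{1-e^{-ty}}{ty}$, come from pulling the increment out on the \emph{right} of $e^{tX}e^{tY}$ (that is where the factor $e^{-ty}$ originates), so they pair with the operator $\frac{1-e^{-v(t)}}{v(t)}$; the two conventions are intertwined by $\mathrm{Ad}_{e^{tX}e^{tY}}=e^{tx}e^{ty}$, not equal. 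As a result, the identity you reduce the lemma to,
\begin{equation*}
X+e^{tx}Y-V_t(X,Y)=e^{-ty}(1-e^{-tx})F_t+(1-e^{-ty})G_t,
\end{equation*}
is not a rearrangement of \eqref{KV1t}: conjugating \eqref{KV1t} by $e^{tx}$ (using $V_t(X,Y)=e^{tx}V_t(Y,X)$ and $e^{tx}X=X$) produces operator coefficients $(e^{tx}-1)$ on $F_t$ and $e^{tx}(e^{ty}-1)$ on $G_t$, not $e^{-ty}(1-e^{-tx})$ and $(1-e^{-ty})$. Equivalently, the correct companion identity has left side $e^{-ty}X+Y-V_t(X,Y)$, and $(X+e^{tx}Y)-(e^{-ty}X+Y)=\tfrac{t^2}{2}\bigl([X,[X,Y]]+[Y,[X,Y]]\bigr)+O(t^3)$, which is nonzero in general (e.g.\ in the free Banach algebra). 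So the step you yourself flag as the main obstacle cannot be closed: the identity you reduce to is false at second order in $t$, and no threading of the conjugation symmetry will repair it.

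The fix is small and lands you on the paper's argument. Differentiate $e^{tX}e^{tY}$ using the right-handed form: $\frac{d}{dt}\bigl(e^{tX}e^{tY}\bigr)=e^{tX}e^{tY}\bigl(e^{-ty}X+Y\bigr)$, giving $\frac{1-e^{-v(t)}}{v(t)}\bigl(V_t+t\,\partial_t V_t\bigr)=e^{-ty}X+Y$; then the same cancellation of $V_t$ and the same application of \eqref{partialXY} to $[X,F_t]=\mathrm{ad}_X F_t$ and $[Y,G_t]=\mathrm{ad}_Y G_t$ reduce \eqref{dVt} exactly to \eqref{KV1t}, with no conjugation symmetry needed, and invertibility of the operator for small $t$ finishes the proof. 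This is in substance what the paper does (it differentiates $V_t=\frac1t V(tX,tY)$ by the chain rule to get $\partial_t V_t=-\frac1t V_t+\frac1t(\partial_X V_t)X+\frac1t(\partial_Y V_t)Y$, applies the operator, and identifies the outcome with \eqref{KV1t}). Note also that the way \eqref{partialXY} is displayed, with the operator written as $\frac{e^{v(t)}-1}{v(t)}$, invites exactly this mix-up; the displayed right-hand sides pin down the right-handed convention, and your proof needs to respect it.
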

\begin{proof}
Throughout the proof we abbreviate  $V_t(X,Y)$, $F_t(X,Y)$, and $G_t(X,Y)$ to $V_t$, $F_t$ and $G_t$.
Taking $\partial_t$ in the definition of $V_t$ we get
\[
\partial V_t=-\frac{1}{t}V_t+\frac{1}{t}(\partial_X V_t)X+\frac 1 t(\partial_Y V_t)Y.
\]
Thus, we must show that
\[
-\frac{1}{t}V_t+\frac{1}{t}(\partial_X V_t)X+\frac 1 t(\partial_Y V_t)Y=
(\partial_X V_t)[X,F_t]+(\partial_Y V_t)[Y,G_t].
\]
Multiplying by $\frac{e^{v(t)}-1}{v(t)}$ on both sides of this equation and using \eqref{partialXY} we get
\[
\frac{1}{t}e^{-ty}+\frac 1 t Y-\frac 1 t V_t=\frac 1 t e^{ty}(1-e^{-tx})F_t+\frac 1 t (1-e^{-ty})G_t.
\]
Now multiplying by $te^{ty}$ on both sides and using that $te^{ty}V_t(X,Y)=V_t(Y,X)$ we get 
\eqref{KV1t}, which we know is valid. Working backwards we get the desired result. (To have invertibility of $\frac{e^{v(t)}-1}{v(t)}$ we need $\|v(t)\|<2\pi$ which 
can be arranged for small enough $t$.)
\end{proof}

For $f\in A$ invertible and $Z\in A$ we use the notation $f\cdot Z=fZf^{-1}$.

\begin{theorem}\label{fgODE}
There exists $\epsilon>0$ such that for any given Banach algebra $A$ there exist continuous functions 
$(X,Y)\mapsto R(X,Y)\in A$ and  $(X,Y)\mapsto S(X,Y)\in A$
defined for   $\|X\|\leq \epsilon$ and $\|Y\|\leq \epsilon$ 
such that $R(0,0)=S(0,0)=0$,
\[
V(e^{R(X,Y)}\cdot X,e^{S(X,Y)}\cdot Y)=X+Y
\]
for all $\|X\|\leq \epsilon$ and $\|Y\|\leq \epsilon$, and 
\begin{equation}
\begin{aligned}\label{RSgather}
R(X,Y) &=\frac Y 4 +[X,R'(X,Y)]+[Y,R''(X,Y)],\\
S(X,Y) &=-\frac X 4+[X,S'(X,Y)]+[Y,S''(X,Y)],
\end{aligned}
\end{equation}
for some $R'$, $R''$, $S'$, and $S''$.
Moreover, if $A$ is a $\Cstar$-algebra and $X$ and $Y$ are skewadjoints, then
$R$, $R'$, $R''$, $S$, $S'$, and $S''$ as above are all skewadjoints. 
\end{theorem}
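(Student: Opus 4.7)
The plan is to construct curves $X(t) = \phi(t) X \phi(t)^{-1}$ and $Y(t) = \psi(t) Y \psi(t)^{-1}$ on $[0,1]$ with $\phi(0) = \psi(0) = 1$ such that $V_t(X(t), Y(t)) \equiv X + Y$. Once such curves are produced, setting $R = \log \phi(1)$ and $S = \log \psi(1)$ gives
\[
V(e^R \cdot X, e^S \cdot Y) = V_1(X(1), Y(1)) = V_0(X, Y) = X + Y,
\]
as required.

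Differentiating the constancy condition $V_t(X(t), Y(t)) \equiv X + Y$ in $t$ and invoking Lemma \ref{dVtlemma} reduces the problem to the coupled system
\[
X'(t) = -[X(t), F_t(X(t), Y(t))], \qquad Y'(t) = -[Y(t), G_t(X(t), Y(t))],
\]
which is implemented by the left ODEs $\phi'(t) = F_t(X(t), Y(t))\,\phi(t)$ and $\psi'(t) = G_t(X(t), Y(t))\,\psi(t)$. Since $F$ and $G$ are normally convergent series of iterated Lie brackets in their arguments, a standard Picard iteration on the coupled pair $(\phi, \psi)$ produces a unique continuous solution on $[0,1]$ whenever $\|X\|, \|Y\| \leq \epsilon$, with $\epsilon$ chosen small enough that the arguments of $F$ and $G$ remain inside their radius of convergence throughout the flow. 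The maps $(X,Y) \mapsto \phi(1)$ and $(X,Y) \mapsto \psi(1)$ are then continuous and close to the identity, so $R(X,Y) = \log \phi(1)$ and $S(X,Y) = \log \psi(1)$ are well defined and continuous, with $R(0,0) = S(0,0) = 0$.

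For the form \eqref{RSgather}, one computes the linear parts of $A(Y, X)$ and $B(Y, X)$ via the Lemma \ref{halfhalf} construction applied to the Campbell--Baker--Hausdorff expansion of $V(Y, X)$; these propagate through the definitions of $F$ and $G$ to give linear parts $Y/4$ for $F_0$ and $-X/4$ for $G_0$, with all higher-order contributions lying in the free Lie algebra on $X, Y$ in degrees at least two. A Magnus-type expansion of $\log \phi(1)$ as an iterated integral of $F_t$-values plus iterated-bracket corrections then shows $R = Y/4 + T_R$, where $T_R$ is a convergent Lie series in $X, Y$ with no degree-one part; applying Lemma \ref{halfhalf} to $T_R$ produces $R', R''$ with $T_R = [X, R'] + [Y, R'']$, yielding the desired form. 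The symmetric argument gives $S = -X/4 + [X, S'] + [Y, S'']$. For skewadjointness, the Lie bracket preserves skewadjoints, so $F_t(X(t), Y(t))$ and $G_t(X(t), Y(t))$ remain skewadjoint when $X, Y$ are; this forces $\phi(t), \psi(t)$ to be unitary, hence $R, S$ skewadjoint, and Lemma \ref{halfhalf} applied to the skewadjoint tails yields skewadjoint $R', R'', S', S''$. The main obstacle will be the bookkeeping in tracking the degree-at-least-two tail of $R$ through both the Picard iteration and the Magnus expansion, confirming that it remains a genuine free Lie series in $X, Y$ so that Lemma \ref{halfhalf} applies, and pinning down the specific leading coefficients $\pm 1/4$ via careful identification of $F_0$ and $G_0$.
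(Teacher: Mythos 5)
Your strategy is essentially the one the paper follows: run a flow $t\mapsto(\X_t,\Y_t)$ of conjugates of $X$ and $Y$ keeping $V_t(\X_t,\Y_t)$ constant, obtain the generator of the flow from Lemma \ref{dVtlemma} (your ODEs $\phi'=F_t(\X_t,\Y_t)\phi$, $\psi'=G_t(\X_t,\Y_t)\psi$ are exactly the paper's system \eqref{cauchyeRS}), and then extract the structural form \eqref{RSgather} via Lemma \ref{halfhalf}. The difference lies in the last step: you solve the ODE at the group level in $A$ and propose to recover $R=\log\phi(1)$ and its decomposition through a Magnus-type expansion, whereas the paper poses the equivalent Cauchy problem \eqref{cauchyRS} for $R_t,S_t$ directly in the free Banach--Lie algebra $\Li(\X,\Y)$, so that $R_t,S_t$ are convergent Lie series by construction, and identifies the degree-one term not by expanding the solution but by a uniqueness/scaling argument ($\lambda_\alpha R_t=R_{\alpha t}$ forces the degree-one coefficients to be linear in $t$, and $\partial_tR_t|_{t=0}$ pins them down).

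The thin spot in your outline is precisely the step you label ``bookkeeping.'' In an arbitrary Banach algebra there is no grading, so ``a convergent Lie series in $X,Y$ with no degree-one part'' and Lemma \ref{halfhalf} (a statement about $\A(\X,\Y)$ and $\Li(\X,\Y)$) only make sense if the entire construction --- Picard iteration, Magnus expansion, and the splitting of the tail --- is carried out universally in the free Banach algebra with $X=\epsilon\X$, $Y=\epsilon\Y$, and only then specialized to $A$ by the evaluation homomorphism. That is what the paper's formulation accomplishes automatically, and it also delivers continuity, $R(0,0)=S(0,0)=0$, and the skewadjointness claims for free (via the involution $\sigma Z=-Z^*$ acting on Lie series); on the Magnus route you must in addition check convergence of that expansion and that each of its iterated-integral terms stays in the closed Lie subalgebra. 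None of this is an obstruction --- your plan can be completed along these lines --- but as written the decisive point is deferred rather than proved. (A small caution on the leading coefficients: they arise from the degree-one parts of $B(Y,X)$ and $A(Y,X)$ pushed through the operators $\ad_X/(e^{-\ad_X}-1)$ and $\ad_Y/(e^{\ad_Y}-1)$, and the only fact used downstream is that they are scalar multiples of $Y$ and $X$ respectively, so the precise values and signs need not be belabored.)
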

\begin{proof}
We will work in the setting of the free Banach algebra $\A(\X,\Y)$ and its Banach-Lie subalgebra
$\Li(\X,\Y)$. 
We will construct $R_t,S_t\in \Li(\X,\Y)$ for $|t|\leq \epsilon$
such that $R_0=S_0=0$
and
\begin{equation}\label{flow}
V_t(e^{R_t}\cdot \X,e^{S_t}\cdot \Y)=\X+\Y,
\end{equation}
for all $|t|\leq \epsilon$. We will moreover show that $\lambda_\alpha R_t=R_{\alpha t}$
and $\lambda_\alpha S_t=S_{\alpha t}$ for all $|\alpha|\leq 1$.
To  derive  the theorem from this
we will make the assignment $\epsilon \X\mapsto X$, $\epsilon \Y\mapsto Y$ and then define $R(X,Y)=R_{\epsilon}(\frac 1 \epsilon X,\frac 1 \epsilon Y)$  and  $S(X,Y)=S_{\epsilon}(\frac 1 \epsilon X,\frac 1 \epsilon Y)$.

Notice that  \eqref{flow} holds for $t=0$ once $R_0=S_0=0$. 
Thus,
differentiating with respect to $t$,  \eqref{flow} is equivalent to
\begin{equation}\label{equivODE}
\partial_t V_t(\X_t,\Y_t)+\partial_X V_t(\X_t,\Y_t)
[\partial_t e^{R_t} e^{-R_t},\X_t]+
\partial_Y V_t(\X_t,\Y_t)[\partial_te^{S_t}e^{-S_t},\Y_t]=0,
\end{equation}
with $R_0=S_0=0$ (see \cite[pages 4--5]{rouviere}).
Here we have set $\X_t=e^{R_t}\cdot \X$ and $\Y_t=e^{S_t}\cdot \Y$.

Let $r_t=\ad_{R_t}$ and $s_t=\ad_{S_t}$. Consider the initial value problem
\begin{equation}\label{cauchyRS}
\begin{aligned}
\partial_t R_t &=\frac{r_t}{1-e^{-r_t}} F_t(\X,e^{-r_t}e^{s_t}\Y),\\
\partial_t S_t &=\frac{s_t}{1-e^{-s_t}}G_t(e^{-s_t}e^{r_t}\X,\Y),
\end{aligned}
\end{equation}
with initial conditions $R_0=S_0=0$.  The function
\[
(U,V,t)\stackrel{H}{\longmapsto} \frac{\ad_U}{1-e^{-\ad_U}} F_t(\X,e^{-\ad_U}e^{\ad_V}\Y)
\]
has the form $\sum_{n=1}^\infty t^{n-1}H_n(U,V)$ where 
\[
H_n(U,V)=\frac{\ad_U}{1-e^{-\ad_U}} F_n(\X,e^{-\ad_U}e^{\ad_V}\Y)\in \Li(\X,\Y)
\]
for sufficiently small $U$. (Recall that $F_n(\X,\Y)$ is a polynomial in $\X$  and $\Y$; here
$F(\X,\Y)=\sum_{n=1}^\infty F_n(\X,\Y)\in \Li[[\X,\Y]]$). Furthemore,
using the simple estimate 
$\|F_n(\X,T\Y)\|\leq (1+\|T\|)^n\|F_n(\X,\Y)\|$ for any operator $T$ acting on $\Li(\X,\Y)$ we derive that $\sum_{n=1}^\infty |t|^{n-1}\|H_n(U,V)\|<\infty$ for $(U,V,t)$
in a sufficiently small neighborhood of $(0,0,0)$. It follows that $H$ has uniformly bounded Frechet derivative for small enough $t$. The same can be said of the second equation in 
\eqref{cauchyRS}. This guarantees the existence and uniqueness of a solution to \eqref{cauchyRS} in the Banach space $\Li(\X,\Y)$ for $|t|<\epsilon$ and some $\epsilon>0$.

From \eqref{cauchyRS} and the formula for the differential of the exponential map we deduce that
\begin{equation}\label{cauchyeRS}
\begin{aligned}
\partial_t e^{R_t} &=F_t(e^{r_t}\X,e^{s_t}\Y)e^{R_t},\\
\partial_t e^{S_t} &=G_t(e^{r_t}\X,e^{s_t}\Y)e^{S_t}.
\end{aligned}
\end{equation}
With $R_t$ and $S_t$  that satisfy \eqref{cauchyeRS},  equation \eqref{equivODE} is 
equivalent to
\[
\partial_t V_t(\X_t,\Y_t)+\partial_X V_t(\X_t,\Y_t)
[F_t(\X_t,\Y_t),\X_t]+
\partial_Y V_t(\X_t,\Y_t)[G_t(\X_t,\Y_t),\Y_t]=0.
\]
But this is equation \eqref{dVt} from Lemma \ref{dVtlemma}, where $X$ and $Y$ haven been replaced by
$\X_t$ and $\Y_t$. This proves \eqref{flow}.

Let $0<\alpha\leq 1$. It is straightforward to check  that 
$(\lambda_\alpha R_{\frac t \alpha},\lambda_\alpha S_{\frac t \alpha})$ is also a solution of \eqref{cauchyRS} (cf. \cite[page 7]{rouviere} for the same verification for $f_t:=e^{R_t}$ and $g_t:=e^{S_t}$). 
It follows that $\lambda_\alpha R_t=R_{\alpha t}$ and $\lambda_\alpha S_t=S_{\alpha t}$.

Let us write $R_t=a(t)\X+b(t)\Y+\sum_{n\geq 2} R_{t,n}$, with $R_{t,n}\in \Li_n(\X,\Y)$.
From $\lambda_\alpha R_t=R_{\alpha t}$ we deduce that $a(t)$ and $b(t)$ are linear.
But from \eqref{cauchyRS} we see that $\partial_t R_t|_{t=0}=F_t(\X,\Y)|_{t=0}=\frac{\X}{4}$. Thus, $R_t=t\frac{\X}{4}+\sum_{n\geq 2} R_{t,n}$ for all $|t|\leq \epsilon$. Applying Lemma \ref{halfhalf} to $R_t$ we get
\[
R_t=t\frac{\X}{4}+[\X,R'_t]+[\Y,R''_t]
\]
for all $|t|\leq \frac{\epsilon}{2}$ and some $R_t',R_t''\in \Li(\X,\Y)$. We can derive similarly for $S_t$ that
\[
S_t=-t\frac Y 4+[\X,S'_t]+[\Y,S''_t]
\]
for $|t|\leq \frac{\epsilon}{2}$ and suitable $S'_t,S_t''\in \Li(\X,\Y)$. 
Let us relabel $\frac \epsilon 2$
as $\epsilon$ so that  these representations of $R_t$ and $S_t$ are valid for $|t|\leq \epsilon$.

To prove the theorem let us make the assignment $\epsilon\X\mapsto X$,
$\epsilon \Y\mapsto Y$, which, for $\|X\|,\|Y\|\leq \epsilon$, extends to
a Banach algebra contractive homomorphism from $\A(\X,\Y)$ to $A$. Then 
$R(X,Y)=R_{\epsilon}(\frac \X\epsilon,\frac\Y\epsilon)$ and  
$S(X,Y)=S_{\epsilon}(\frac \X\epsilon,\frac\Y\epsilon)$ are as desired.

Finally, suppose that $A$ is a $\Cstar$-algebra and that $X$ and $Y$ are skewadjoints.
Since $\sigma Z=-Z^*$ is a continuous Lie algebra homomorphism and $R(X,Y)$ is a convergent series of Lie brackets on $X,Y$ we get that $\sigma R(X,Y)=R(\sigma X,\sigma Y)=R(X,Y)$, i.e., $R(X,Y)$ is skewadjoint.  The same argument applies to $S(X,Y)$ and to 
$R', R'',S',S''$.
\end{proof}

\begin{bibdiv}
	\begin{biblist}

\bib{bass}{article}{
   author={Bass, H.},
   title={$K$-theory and stable algebra},
   journal={Inst. Hautes \'Etudes Sci. Publ. Math.},
   number={22},
   date={1964},
   pages={5--60},
}

				\bib{BKS}{article}{
					author={Bre{\v{s}}ar, Matej},
					author={Kissin, Edward},
					author={Shulman, Victor S.},
					title={Lie ideals: from pure algebra to $C\sp *$-algebras},
					journal={J. Reine Angew. Math.},
					volume={623},
					date={2008},
					pages={73--121},	
					}


\bib{delaHarpe2}{article}{
   author={de la Harpe, Pierre},
   title={Classical groups and classical Lie algebras of operators},
   conference={
      title={Operator algebras and applications, Part I},
      address={Kingston, Ont.},
      date={1980},
   },
   book={
      series={Proc. Sympos. Pure Math.},
      volume={38},
      publisher={Amer. Math. Soc., Providence, R.I.},
   },
   date={1982},
   pages={477--513},
}

\bib{dlHarpe-Skandalis1}{article}{
	author={de la Harpe, P.},
	author={Skandalis, G.},
	title={D\'eterminant associ\'e \`a une trace sur une alg\'ebre de Banach},
	language={French, with English summary},
	journal={Ann. Inst. Fourier (Grenoble)},
	volume={34},
	date={1984},
	number={1},
	pages={241--260},
}

\bib{dlHarpe-Skandalis2}{article}{
   author={de la Harpe, P.},
   author={Skandalis, G.},
   title={Produits finis de commutateurs dans les $C\sp \ast$-alg\`ebres},
   language={French, with English summary},
   journal={Ann. Inst. Fourier (Grenoble)},
   volume={34},
   date={1984},
   number={4},
   pages={169--202},
}

\bib{delaHarpe-Skandalis3}{article}{
   author={de la Harpe, Pierre},
   author={Skandalis, Georges},
   title={Sur la simplicit\'e essentielle du groupe des inversibles et du
   groupe unitaire dans une $C\sp \ast$-alg\`ebre simple},
   language={French, with English summary},
   journal={J. Funct. Anal.},
   volume={62},
   date={1985},
   number={3},
   pages={354--378},
}

				\bib{dynkin}{article}{
					author={Dynkin, E. B.},
					title={Calculation of the coefficients in the Campbell-Hausdorff formula},
					language={Russian},
					journal={Doklady Akad. Nauk SSSR (N.S.)},
					volume={57},
					date={1947},
					pages={323--326},
				}


		\bib{herstein0}{article}{
			author={Herstein, I. N.},
			title={On the Lie and Jordan rings of a simple associative ring},
			journal={Amer. J. Math.},
			volume={77},
			date={1955},
			pages={279--285},
}

		\bib{hersteinbook}{book}{
   author={Herstein, I. N.},
   title={Topics in ring theory},
   publisher={The University of Chicago Press, Chicago, Ill.-London},
   date={1969},
   pages={xi+132},
}

		\bib{herstein}{article}{
			author={Herstein, I. N.},
			title={On the Lie structure of an associative ring},
			journal={J. Algebra},
			volume={14},
			date={1970},
			pages={561--571},
		}

		\bib{herstein-banach}{article}{
			author={Herstein, I. N.},
			title={On the multiplicative group of a Banach algebra},
			conference={
				title={Symposia Mathematica, Vol. VIII},
				address={Convegno sulle Algebre Associative, INDAM, Rome},
				date={1970},
			},
			book={
				publisher={Academic Press, London},
			},
			date={1972},
			pages={227--232},
		}

	\bib{elliott-rordam}{article}{
		author={Elliott, George A.},
		author={R{\o}rdam, Mikael},
		title={The automorphism group of the irrational rotation $C\sp
			*$-algebra},
		journal={Comm. Math. Phys.},
		volume={155},
		date={1993},
		number={1},
		pages={3--26},
	}			
				
			\bib{kad1}{article}{
				author={Kadison, Richard V.},
				title={Infinite unitary groups},
				journal={Trans. Amer. Math. Soc.},
				volume={72},
				date={1952},
				pages={386--399},
			}

\bib{kad2}{article}{
				author={Kadison, Richard V.},
				title={On the general linear group of infinite factors},
				journal={Duke Math. J.},
				volume={22},
				date={1955},
				pages={119--122},
}
			
\bib{kad3}{article}{
					author={Kadison, Richard V.},
					title={Infinite general linear groups},
					journal={Trans. Amer. Math. Soc.},
					volume={76},
					date={1954},
					pages={66--91},
}

%

\bib{kirchberg-rordam}{article}{
	author={Kirchberg, Eberhard},
	author={R{\o}rdam, Mikael},
	title={Central sequence $C\sp *$-algebras and tensorial absorption of the
		Jiang-Su algebra},
	journal={J. Reine Angew. Math.},
	volume={695},
	date={2014},
	pages={175--214},
}

	\bib{marcoux-murphy}{article}{
		author={Marcoux, L. W.},
		author={Murphy, G. J.},
		title={Unitarily-invariant linear spaces in $C\sp *$-algebras},
		journal={Proc. Amer. Math. Soc.},
		volume={126},
		date={1998},
		number={12},
		pages={3597--3605},
	}

\bib{marcoux06}{article}{
	author={Marcoux, L. W.},
	title={Sums of small number of commutators},
	journal={J. Operator Theory},
	volume={56},
	date={2006},
	number={1},
	pages={111--142},
}

	\bib{miers}{article}{
		author={Miers, C. Robert},
		title={Closed Lie ideals in operator algebras},
		journal={Canad. J. Math.},
		volume={33},
		date={1981},
		number={5},
		pages={1271--1278},
	}

\bib{tracedim}{article}{
author={Ng, Ping Wong},
author={Robert, Leonel},
title={Sums of commutators in pure $C\sp
   *$-algebras},
journal={Munster Journal of Mathematics (to appear)},
eprint={http://arxiv.org/abs/1504.00046},
date={2014}
}

\bib{ng-ruiz}{article}{
   author={Ng, Ping Wong},
   author={Ruiz, Efren},
   title={The automorphism group of a simple $\scr{Z}$-stable $C\sp
   *$-algebra},
   journal={Trans. Amer. Math. Soc.},
   volume={365},
   date={2013},
   number={8},
   pages={4081--4120},
   issn={0002-9947},

}

\bib{lie}{article}{
	author={Robert, Leonel},
	title={On the Lie ideals of C*-algebras},
	journal={Journal of Operator Theory (to appear)},
	date={2014},	
	}

\bib{pestov}{article}{
   author={Pestov, Vladimir},
   title={Correction to: ``Free Banach-Lie algebras, couniversal Banach-Lie
   groups, and more'' [Pacific J. Math.\ \textbf{157} (1993), no.\ 1, 137--144;
   MR1197049 (94c:46099)]},
   journal={Pacific J. Math.},
   volume={171},
   date={1995},
   number={2},
   pages={585--588},
}

\bib{rouviere}{book}{
	author={Rouvi{\`e}re, Fran{\c{c}}ois},
	title={Symmetric spaces and the Kashiwara-Vergne method},
	series={Lecture Notes in Mathematics},
	volume={2115},
	publisher={Springer, Cham},
	date={2014},
	pages={xxii+196},
}

\bib{thomsen}{article}{
   author={Thomsen, Klaus},
   title={Finite sums and products of commutators in inductive limit $C^\ast$-algebras},
   language={English, with English and French summaries},
   journal={Ann. Inst. Fourier (Grenoble)},
   volume={43},
   date={1993},
   number={1},
   pages={225--249},
}

\bib{vaserstein}{article}{
   author={Vaserstein, L. N.},
   title={Normal subgroups of the general linear groups over Banach
   algebras},
   journal={J. Pure Appl. Algebra},
   volume={41},
   date={1986},
   number={1},
   pages={99--112},
   doi={10.1016/0022-4049(86)90104-0},
}
\end{biblist}
\end{bibdiv}		
\end{document}